\renewcommand{\thesubfigure}{\thefigure.\arabic{subfigure}}
\renewcommand{\p@subfigure}{}
\renewcommand{\@thesubfigure}{\thesubfigure:\hskip\subfiglabelskip}
\newtheorem{theorem}{Theorem}
\newtheorem{lemma}{Lemma}
\newtheorem{corollary}{Corollary}
\newtheorem{proposition}{Proposition}
\newtheorem{remark}{Remark}
\theoremstyle{definition}
\newtheorem{definition}{Definition}
\newtheorem{example}{Example}
\newtheorem{conjecture}{Conjecture}
\newcommand{\norm}[1]{\left\|#1\right\|}
\newcommand{\cl}{\mbox{cl}} 
\newcommand{\re}{\mbox{re}} 
\newcommand{\Ree}{\mbox{Re}} 
\newcommand{\area}{\mbox{area}} 
\newcommand{\shape}{\mbox{shape}} 
\newcommand{\curvature}{\mbox{curvature}} 
\newcommand{\perim}{\mbox{perim}} 
\newcommand{\cv}{\mbox{conv}} 
\newcommand{\sh}{\mbox{sheet}} 
\newcommand{\str}{\mbox{str}} 
\newcommand{\strShape}{\mbox{strShape}} 
\newcommand{\wsh}{\mbox{wsh}} 
\newcommand{\wshShape}{\mbox{wshShape}} 
\newcommand{\near}{\delta} 
\newcommand{\dnear}{\delta_{\Phi}} 
\newcommand{\dfar}{{\not\delta}_{\Phi}} 
\newcommand{\dcap}{\mathop{\cap}\limits_{\Phi}} 
\newcommand{\sn}{\mathop{\delta}\limits^{\doublewedge}} 
\newcommand{\snd}{\mathop{\delta_{_{\Phi}}}\limits^{\doublewedge}} 
\newcommand{\Int}{\mbox{int}}
\newcommand{\sdfar}{\stackrel{\not{\text{\normalsize$\delta$}}}{\text{\tiny$\doublevee$}}_{_{\Phi}}} 
\newcommand{\sfar}{\stackrel{\not{\text{\normalsize$\delta$}}}{\text{\tiny$\doublevee$}}} 
\newcommand{\bdy}{\mbox{bdy}}     
\newcommand{\bnd}{\mbox{bnd}_{\Phi}}     
\newcommand{\nhbd}{\mbox{nhbd}}     
\begin{document}

\def\defineTColor#1#2{%
 \newpsstyle{#1}{%
  fillstyle=vlines,hatchcolor=#2,
  hatchwidth=0.1\pslinewidth,
  hatchsep=1\pslinewidth}%
  }
\defineTColor{Tgray}{gray}
\defineTColor{Tgreen}{green}
\defineTColor{Tyellow}{yellow}
\defineTColor{Tred}{red} 
\defineTColor{Tblue}{blue} 
\defineTColor{Tmagenta}{magenta}
\defineTColor{Torange}{orange}
\defineTColor{Twhite}{white}
\defineTColor{Tbgreen}{brightgreen}

\title[Two Forms of Proximal Physical Geometry]{Two Forms of Proximal Physical Geometry.\\  Axioms, Sewing Regions Together, Classes of Regions, Duality, and Parallel Fibre Bundles}

\author[J.F. Peters]{J.F. Peters$^{\alpha}$}
\email{James.Peters3@umanitoba.ca}
\address{\llap{$^{\alpha}$\,}Computational Intelligence Laboratory,
University of Manitoba, WPG, MB, R3T 5V6, Canada and
Department of Mathematics, Faculty of Arts and Sciences, Ad\.{i}yaman University, 02040 Ad\.{i}yaman, Turkey}
\thanks{The research has been supported by the Natural Sciences \&
Engineering Research Council of Canada (NSERC) discovery grant 185986 
and Instituto Nazionale di Alta Matematica (INdAM) Francesco Severi, Gruppo Nazionale per le Strutture Algebriche, Geometriche e Loro Applicazioni grant 9 920160 000362, n.prot U 2016/000036.}

\subjclass[2010]{Primary 54E05 (Proximity); Secondary 68U05 (Computational Geometry)}

\date{}

\dedicatory{Dedicated to Som Naimpally and V.F. Lenzen}

\begin{abstract}
This paper introduces two proximal forms of Lenzen physical geometry, namely, an \emph{axiomatized strongly proximal physical geometry} that is built on simplicial complexes with the dualities and sewing operations derived from string geometry and an \emph{axiomatized descriptive proximal physical geometry} in which spatial regions are described based on their features and the descriptive proximities between regions.  This is a computational proximity approach to a Lenzen geometry of physical space.  In both forms of physical geometry, region is a primitive.  Intuitively, a region is a set of connected subregions.   The primitive in this geometry is \emph{region}, instead of \emph{point}.  Each description of a region with $n$ features is a mapping from the region to a feature vector in $\mathbb{R}^n$.  In the feature space, proximal physical geometry has the look and feel of either Euclidean, Riemannian, or non-Euclidean geometry, since we freely work with the relations between points in the feature space. The focus in these new forms of geometry is the relation between individual regions with their own distinctive features such as shape, area, perimeter and diameter and the relation between nonempty sets of regions. The axioms for physical geometry as well as the axioms for proximal physical geometry are given and illustrated.  Results for parallel classes of regions, descriptive fibre bundles and BreMiller-Sloyer sheaves are given.  In addition, a region-based Borsuk-Ulam Theorem as well as a Wired Friend Theorem are given in the context of both forms of physical geometry.
\end{abstract}

\keywords{Overlap, Description, Proximity, Physical Geometry, Region}

\maketitle

\section{Introduction}
This paper introduces two new forms of Lenzen physical geometry~\cite{Lenzen1939AMMphysicalGeometry} in which regions have distinctive features, leading to mappings of regions into $\mathbb{R}^n$ and the proximities between regions as well as between nonempty sets of regions are its focus.

\setlength{\intextsep}{0pt}

\begin{wrapfigure}[18]{R}{0.55\textwidth}
\begin{minipage}{4.2 cm}
\centering
\includegraphics[width=65mm]{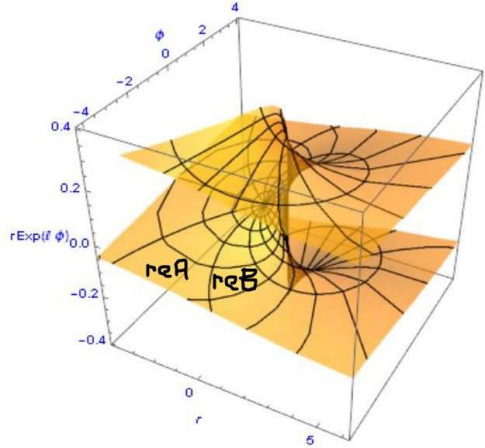}
\caption[]{\footnotesize\bf Overlapping Regions}
\label{fig:reAreB}
\end{minipage}
\end{wrapfigure}
$\mbox{}$\\
\vspace{5mm}

The primitive in this geometry is \emph{region}, instead of \emph{point}.  The result is a pure physical geometry (called \emph{strongly proximal physical geometry}) on a space of regions, which is pair $(Re,\sn)$, where $Re$ is a set of regions and $\sn$ is a strong proximity (overlap) relation on pairs of regions.   From this, it is then possible to give axioms for this form of physical geometry. 

In its descriptive form, a \emph{proximal physical geometric space} is a set of physical regions endowed with one or more descriptive proximity relations.  This descriptive physical geometric space is a pair$(Re,\mathscr{R})$, where $Re$ is a set of regions and $\mathscr{R}$ is a proximal relator, {\em i.e.}, a set of one or more proximity relations on $Re$ that minimally includes a descriptive proximity $\dnear$.  From this, it is then possible to give axioms for descriptive physical geometry. 

$\mbox{}$\\
\vspace{8mm}

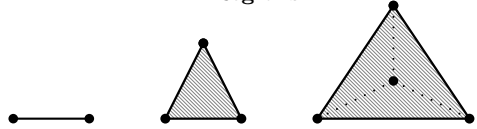
\begin{figure}[!ht]
\begin{center}
 \begin{pspicture}
 (-2.5,1.0)(4.5,2.5)
  \pspolygon[showpoints=true,linecolor=black,linestyle=solid,linewidth=0.03,style=Tgray] 
  (1.00,1.00)(2.00,1.00)(1.50,2.00)
	\psdot(1.50,2.00)
	\pspolygon[showpoints=true,linecolor=black,linestyle=solid,linewidth=0.03,style=Tgray]
  (3.00,1.00)(5.00,1.00)(4.00,2.50) 
	\psline[linestyle=dotted]{-}(3.0,1.0)(4.00,1.50)\psline[linestyle=dotted]{-}(4.00,1.50)(4.00,2.50)
	\psline[linestyle=dotted]{-}(4.00,1.50)(5.00,1.00)
	\psdot(4.00,2.50)\psdot(4.00,1.50)
	\psline[showpoints=true]{-}(-1.0,1.0)(0.0,1)
 \end{pspicture}
  \caption{1-, 2-, 3-Simplicial Complex}
	\label{fig:Simplices}
 \end{center}
\end{figure}
$\mbox{}$\\
\vspace{3mm}

In both types of physical geometry, the basic structures are vertex regions, edges (each edge has two vertices), triangles (each triangle has 3 vertices), facial regions (surfaces of polyhedrons), $k$-simplices, simplicial complexes, strings and worldsheets.  Every basic structure is collection of regions.  A $k$-\emph{simplex} is the smallest possible polytope in an $n$-dimensional space.  The canonical home for a simplex is a real Euclidean space~\cite[\S 3.2.1, p. 109]{Krantz2010Homology}, namely, $\mathbb{R}^n$.   Let $A$ be a triangle.  A \emph{convex hull} (denoted by $\cv A$) is the smallest convex set containing $A$.
A \emph{simplicial complex} is a finite collection of simplices $K$ such that if $\re A \in K$ and $\re B\in \re A$, then $\re A \in K$ (boundedness condition).  For example, in Fig.~\ref{fig:Simplices},
a $k$-\emph{simplicial complex} is an edge ($k$ = 1), convex hull of a triangle ($k$ = 2) and convex hull of a tetrahedron ($k$ = 3).

In addition, if $\re A, \re B\in K$, then $\re A\cap \re B = \emptyset$ or $\re A\cap \re B$ is a face of both $\re A$ and $\re B$~\cite{Edelsbrunner1999}.  A {\bf $k$-simplex} (or {\bf k-simplicial complex}) has $v_0,v_1,\dots,v_d$ vertices and $k-1$ faces in $\mathbb{R}^d$. A $k$-simplex is the convex hull of $k+1$ {\bf linearly independent} points (vectors).  Let $X\subset \mathbb{R}^k$.  Recall that a {\bf convex hull} of $X$ (denoted $\mbox{conv}\ X$) is defined by
\[
\mbox{conv}\ X = \bigcap\left\{C: C\ \mbox{is a convex set that contains}\ X\right\}.
\]
A set of vectors $v_0,v_1,\dots,v_k$ in a $k$-dimensional space are linearly independent over a real field $F$, if and only if, for all scalars $\alpha_i\in F$,
\[
\alpha_0v_0 + \alpha_1v_1,\dots,\alpha_kv_k = 0\ \mbox{implies}\ \alpha_0 = \alpha_1 = \dots = \alpha_k = 0.
\]
A 0-simplex (vertex), 1-simplex (line segment), 2-simplex (solid triangle) and 3-simplex (3-sided polyhedron) are represented in Fig.~\ref{fig:Simplices}. 

Two important structures in physical geometry are strings and world sheets.  Recall that a geometric structure that has the characteristics of a cosmological string $A$ (denoted $\str A$), which is the path followed by a particle $A$ moving through space.  A \emph{string} is a region of space with non-zero width (in a non-abstract, physical geometry space such as the space described by V.F. Lenzen~\cite{Lenzen1939AMMphysicalGeometry}) and has either bounded or unbounded length.  Another name for such a string is \emph{worldline}~\cite{OliveLandsberg1989stringTheory,Olive1988stringsAndSuperstrings,Olive1987algebrasAndStrings}.  Here, a string is a region on the surface of an $n$-sphere or a region in an $n$-dimensional normed linear space.  Every string is a spatial region, but not every spatial region is a string.

Let $M$ be a nonempty region of a space $X$.  Region $M$ is a \emph{worldsheet} (denoted by $\sh\ M$), provided every subregion of $\sh\ M$ contains at least one string.  In other words, a worldsheet is completely covered by strings.  Let $X$ be a collection of strings.  $X$ is a \emph{cover} for $\sh\ M$, provided $\sh\ M\subseteq X$.  Every member of a worldsheet is a string.

\begin{figure}[!ht]
\centering
\subfigure[bounded worldsheet]
 {\label{fig:1worldsheet}\includegraphics[width=85mm]{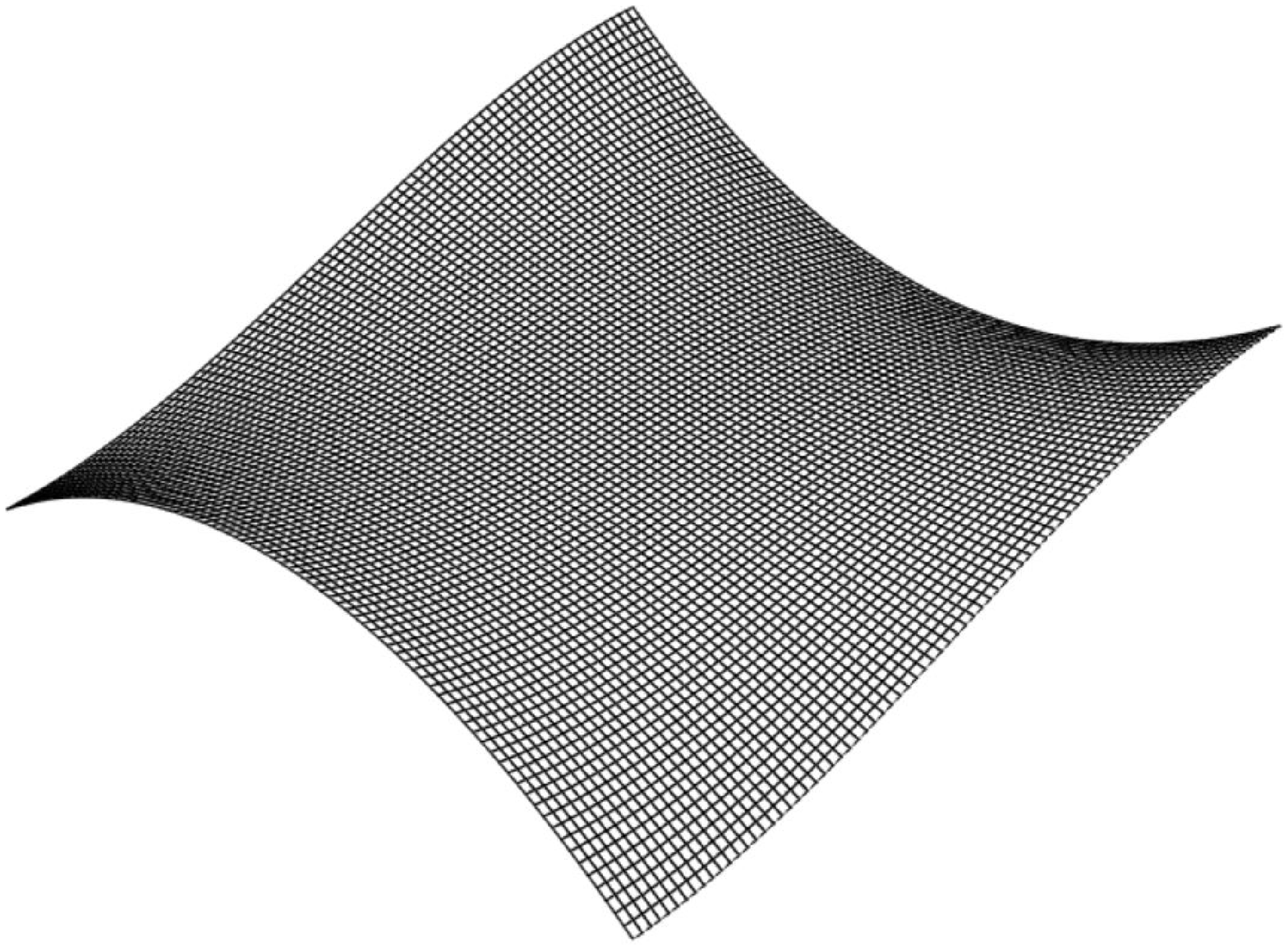}}\hfil
\subfigure[rolled worldsheet]
 {\label{fig:2worldsheet}\includegraphics[width=30mm]{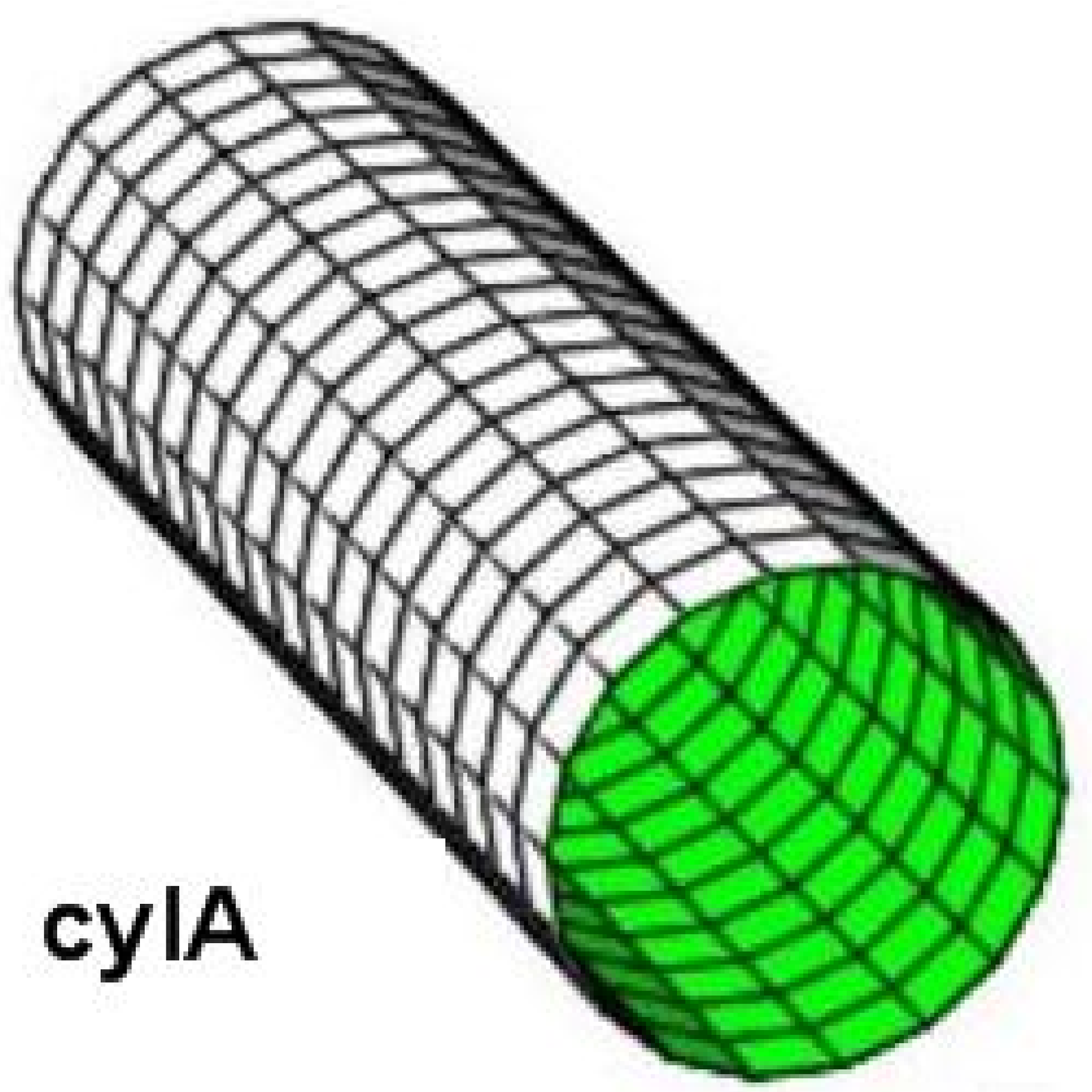}}
\caption[]{Bounded worldsheet $\mapsto$ cylinder surface}
\label{fig:corticalRegions}
\end{figure}
$\mbox{}$\\
\vspace{2mm}

\begin{example}\label{ex:rolledUpWorldsheet}
\includegraphics[width=30mm]{1worldsheet}{\large $\boldsymbol{\longmapsto}$}\includegraphics[width=10mm]{2worldsheet} A worldsheet $\sh M$ with finite width and finite length is represented in Fig.~\ref{fig:1worldsheet}.  This worldsheet is rolled up to form the lateral surface of a cylinder represented in Fig.~\ref{fig:2worldsheet}, namely, $2\pi rh$ with radius $r$ and height $h$ equal to the length of $\sh\ M$.  We call this a worldsheet cylinder.  In effect, a flattened, bounded worldsheet maps to a worldsheet cylinder. \qquad \textcolor{blue}{$\blacksquare$}
\end{example}

A \emph{line} is a region of space with zero width (in an abstract geometric space or geometry in a science of spaces~\cite[\S 8.0]{ScribaSchreiber2005geometry}, esp.~\cite{Brouwer1954CJMpointsAndSpaces}) or non-zero width (in a non-abstract, physical geometry space such as the geometry described by V.F. Lenzen~\cite{Lenzen1939AMMphysicalGeometry}) and has either bounded or unbounded length.  Each region in a physical geometry has a non-zero area.  Hence, a physical geometry space does not contain the usual points found in an abstract space.  A physical region is described by a feature vector in $\mathbb{R}^n$.  Each component of a feature vector is a probe function value for a region feature such as area, perimeter and diameter.  

This form of physical geometry is an example of a geometry without points.  

Many other forms of point-free geometry have been proposed.  For example, in the Gerla-Volpe point-free geometry~\cite{GerlaVolpe1985AMMpointfreeGeometry}, the primitives are solid, inclusion and the distance between solids.  Other examples of point-free geometries are given in which the primitives are regions, inclusion, minimum and maximum distance~\cite{Gerla2007JPLpointfreeGeometry}, or the primitives are region and quasi-metric~\cite{DiConcilio2013mcs} or the primitives are regions, closeness, smallness and inclusion~\cite{CoppolaGerla2015LLPpointfreeGeometry} (see, also,~\cite{Gerla1994pointlessGeometries,GerlaPaolillo2010WhiteheadGeometry,CoppolaPacelli2006pointlessGeometry,CoppolaGerla2015pointfreeGeometry,Gerla2007pointfreeGeometryVersimilitudes,Gerla2005pointfreeGeometryGradedInclusion,Gerla1982senzaPuntiGeometry}).

An example of a physical region is a string that is a \emph{worldline}~\cite{OliveLandsberg1989stringTheory,Olive1988stringsAndSuperstrings,Olive1987algebrasAndStrings}.  A string on the surface of an $n$-sphere is a line that represents the path traced by moving particle along the surface of the $S^n$.  Disjoint strings on the surface of $S^n$ that are antipodal and with matching description are descriptively near \emph{antipodal strings}.    A pair of strings $A,\righthalfcap A$ are \emph{antipodal}, provided, for some subregions $p\in A, q\in \righthalfcap A$, there exist disjoint parallel hyperplanes $P,Q\subset S^n$ such that $p\in P$ and $q\in Q$.   Such strings can be spatially far apart and also descriptively near.

\begin{example}
Geometric regions that share subregions are strongly near.  For example, many pairs of strongly near (overlapping) regions $\re A,\re B$ are on the curved surface of an inverse elliptical nome such as the one in Fig.~\ref{fig:reAreB}.   We write $\re A\ \sn\ \re B$, since this pair of regions has a common edge.
\qquad \textcolor{blue}{$\blacksquare$}
\end{example}

A physical geometric space is a proximal relator space~\cite{Peters2016relator}, provided the space is equipped with a family of proximity relations.
Such a space is descriptive, provided one or more of its proximities is descriptive~\cite{Peters2015AMSJmanifolds,PetersGuadagni2015strongConnectedness} (see, also,~\cite[\S 1.4, pp. 16-17, 26-29 and \S 4.3, pp. 128-132]{Peters2016ComputationalProximity}).  A descriptive physical geometry is a form of computational geometry in which, for example, the nearness of regions in a Vorono\"{i} tessellation of sets of regions is determined.

\section{Preliminaries}
This section briefly introduces spatial and descriptive forms of proximity that provide a basis for two corresponding forms of strong Lodato proximity introduced in~\cite{Peters2015AMSJmanifolds} and axiomatized in~\cite{Peters2016ComputationalProximity}.  

Let $X$ be a nonempty set. A \emph{Lodato proximity}~\cite{Lodato1962,Lodato1964,Lodato1966} $\delta$ is a relation on the family of sets $2^X$, which satisfies the following axioms for all subsets $A, B, C $ of $X$:\\

\begin{description}
\item[{\rm\bf (P0)}] $\emptyset \not\delta A, \forall A \subset X $.
\item[{\rm\bf (P1)}] $A\ \delta\ B \Leftrightarrow B \delta A$.
\item[{\rm\bf (P2)}] $A\ \cap\ B \neq \emptyset \Rightarrow A \near B$.
\item[{\rm\bf (P3)}] $A\ \delta\ (B \cup C) \Leftrightarrow A\ \delta\ B $ or $A\ \delta\ C$.
\item[{\rm\bf (P4)}] $A\ \delta\ B$ and $\{b\}\ \delta\ C$ for each $b \in B \ \Rightarrow A\ \delta\ C$. \qquad \textcolor{blue}{$\blacksquare$}
\end{description}
\vspace{3mm}
Further $\delta$ is \textit{separated }, if 
\vspace{3mm}
\begin{description}
\item[{\rm\bf (P5)}] $\{x\}\ \delta\ \{y\} \Rightarrow x = y$. \qquad \textcolor{blue}{$\blacksquare$}
\end{description}

\setlength{\intextsep}{0pt}

\noindent We can associate a topology with the space $(X, \delta)$ by considering as closed sets those sets that coincide with their own closure.  For a nonempty set $A\subset X$, the closure of $A$ (denoted by $\mbox{cl} A$) is defined by,
\[
\mbox{cl} A = \{ x \in X: x\ \delta\ A\}.
\]

The descriptive proximity $\delta_{\Phi}$ was introduced in~\cite{Peters2012ams}.   Let $A,B \subset X$ and let $\Phi(x)$ be a feature vector for $x\in X$, a nonempty set of non-abstract points such as picture points.  $A\ \delta_{\Phi}\ B$ reads $A$ is descriptively near $B$, provided $\Phi(x) = \Phi(y)$ for at least one pair of points, $x\in A, y\in B$.  From this, we obtain the description of a set and the descriptive intersection~\cite[\S 4.3, p. 84]{Naimpally2013} of $A$ and $B$ (denoted by $A\ \dcap\ B$) defined by
\begin{description}
\item[{\rm\bf ($\boldsymbol{\Phi}$)}] $\Phi(A) = \left\{\Phi(x)\in\mathbb{R}^n: x\in A\right\}$, set of feature vectors.
\item[{\rm\bf ($\boldsymbol{\dcap}$)}]  $A\ \dcap\ B = \left\{x\in A\cup B: \Phi(x)\in \Phi(A)\ \mbox{and}\ \Phi(x)\in \Phi(B)\right\}$.
\qquad \textcolor{blue}{$\blacksquare$}
\end{description}
Then swapping out $\near$ with $\dnear$ in each of the Lodato axioms defines a descriptive Lodato proximity. 

That is, a \textit{descriptive Lodato proximity $\dnear$} is a relation on the family of sets $2^X$, which satisfies the following axioms for all subsets $A, B, C $ of $X$.\\

\begin{description}
\item[{\rm\bf (dP0)}] $\emptyset\ \dfar\ A, \forall A \subset X $.
\item[{\rm\bf (dP1)}] $A\ \dnear\ B \Leftrightarrow B\ \dnear\ A$.
\item[{\rm\bf (dP2)}] $A\ \dcap\ B \neq \emptyset \Rightarrow\ A\ \dnear\ B$.
\item[{\rm\bf (dP3)}] $A\ \dnear\ (B \cup C) \Leftrightarrow A\ \dnear\ B $ or $A\ \dnear\ C$.
\item[{\rm\bf (dP4)}] $A\ \dnear\ B$ and $\{b\}\ \dnear\ C$ for each $b \in B \ \Rightarrow A\ \dnear\ C$. \qquad \textcolor{blue}{$\blacksquare$}
\end{description}
\vspace{3mm}
Further $\dnear$ is \textit{descriptively separated }, if 
\vspace{3mm}
\begin{description}
\item[{\rm\bf (dP5)}] $\{x\}\ \dnear\ \{y\} \Rightarrow \Phi(x) = \Phi(y)$ ($x$ and $y$ have matching descriptions). \qquad \textcolor{blue}{$\blacksquare$}
\end{description}
\vspace{3mm}

\begin{wrapfigure}[8]{R}{0.45\textwidth}
\begin{minipage}{4.5 cm}
\centering
\includegraphics[width=40mm]{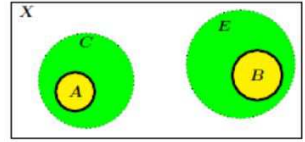}
\caption[]{\footnotesize\bf Strongly Far}
\label{fig:stronglyFar}
\end{minipage}
\end{wrapfigure}

\noindent The pair $\left( X,\dnear \right)$ is called a \emph{descriptive proximity space}.   Unlike the Lodato Axiom (P2), the converse of the descriptive Lodato Axiom (dP2) also holds.

\begin{proposition}{\rm \cite{PetersTozzi2016arXivWiredFriend}}
Let $\left(X,\dnear\right)$ be a\\ descriptive proximity space, $A,B\subset X$.\\  Then $A\ \dnear\ B \Rightarrow A\ \dcap\ B\neq \emptyset$.
\end{proposition}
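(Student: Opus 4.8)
The plan is to unwind the two definitions involved --- the descriptive nearness relation $\dnear$ and the descriptive intersection $\dcap$ --- and to produce an explicit witness point. First I would invoke the defining property of $\dnear$ recalled just above the statement: saying $A\ \dnear\ B$ means that there is at least one pair of points $x\in A$ and $y\in B$ with matching feature vectors, $\Phi(x) = \Phi(y)$. Fix such a pair $x,y$.

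Next I would check that this common point already lies in $A\ \dcap\ B$. Since $x\in A$, its feature vector $\Phi(x)$ belongs to $\Phi(A) = \{\Phi(a): a\in A\}$; since $y\in B$ and $\Phi(x) = \Phi(y)$, that same vector $\Phi(x)$ also belongs to $\Phi(B)$. Moreover $x\in A\subseteq A\cup B$. Hence $x$ satisfies exactly the membership condition in the definition of $A\ \dcap\ B$ (Axiom $(\dcap)$), so $x\in A\ \dcap\ B$, and in particular $A\ \dcap\ B\neq\emptyset$. This is the whole argument.

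There is no real obstacle; the only point deserving a word of care is that the space $\left(X,\dnear\right)$ in the statement is understood to carry the concrete descriptive proximity, so that the relation $\dnear$ is the feature-matching relation $\delta_{\Phi}$ --- which is precisely how $\dnear$ was introduced here, by substituting $\dnear$ for $\near$ in the Lodato axioms applied to $\delta_{\Phi}$. It is also worth remarking, as a closing observation rather than a step of the proof, that the reverse implication $A\ \dcap\ B\neq\emptyset \Rightarrow A\ \dnear\ B$ is just Axiom (dP2); combined with the proposition it gives the equivalence $A\ \dnear\ B \Leftrightarrow A\ \dcap\ B\neq\emptyset$, which is the exact sense in which (dP2), unlike the spatial Axiom (P2), also holds in the converse direction.
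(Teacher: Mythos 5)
Your proof is correct. The paper itself gives no proof of this proposition --- it is stated as a cited result from an external reference --- so there is nothing internal to compare against; but your argument is exactly the one the definitions dictate: unwind $A\ \dnear\ B$ to get a pair $x\in A$, $y\in B$ with $\Phi(x)=\Phi(y)$, and observe that $x$ then satisfies the membership condition defining $A\ \dcap\ B$. Your closing caveat is also the right one to flag: the implication is \emph{not} a consequence of the axioms (dP0)--(dP4) alone (which give only the converse direction via (dP2)); it genuinely requires reading $\dnear$ as the concrete feature-matching relation $\delta_{\Phi}$, which is how the paper introduces it and evidently how the proposition is intended, since the surrounding text explicitly advertises that the converse of (dP2) holds in the descriptive setting.
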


\subsection{Spatial and Descriptive Strong Proximities}
This section briefly introduces spatial strong proximity between nonempty sets and descriptive strong Lodato proximity.
Nonempty sets $A,B$ in a topological space $X$ equipped with the relation $\sn$, are \emph{strongly near} [\emph{strongly contacted}] (denoted $A\ \sn\ B$), provided the sets have at least one point in common.   The strong contact relation $\sn$ was introduced in~\cite{Peters2015JangjeonMSstrongProximity} and axiomatized in~\cite{PetersGuadagni2015stronglyNear},~\cite[\S 6 Appendix]{Guadagni2015thesis}.

\begin{definition}\cite{PetersGuadagni2015arXivFar}
We say that $A$ and $B$ are $\delta-$strongly far and we write $\mathop{\not{\delta}}\limits_{\mbox{\tiny$\doublevee$}}$ if and only if $A \not\delta B$ and there exists a subset $C$ of $X$ such that $A \not\delta X \setminus C$ and $C \not\delta B$, that is the Efremovi\v c property holds on $A$ and $B$.
\end{definition}

\begin{example}\cite{PetersGuadagni2015arXivFar}
In Figure~\ref{fig:stronglyFar}, let $X$ be a nonempty set endowed with the euclidean metric proximity $\delta_e$, $C,E\subset X, A\subset C, B\subset E$.  Clearly, $A\ \stackrel{\not{\text{\normalsize$\delta$}}_e}{\text{\tiny$\doublevee$}}\ B$ ($A$ is strongly far from $B$), since $A \not\delta_e B$ so that  $A \not\delta_e X \setminus C$ and $C \not\delta_e B$.  Also observe that the Efremovi\v{c} property holds on $A$ and $B$. \qquad \textcolor{black}{$\blacksquare$}
\end{example}

Let $X$ be a topological space, $A, B, C \subset X$ and $x \in X$.  The relation $\sn$ on the family of subsets $2^X$ is a \emph{strong proximity}, provided it satisfies the following axioms.

\begin{description}
\item[{\rm\bf (snN0)}] $\emptyset\ \sfar\ A, \forall A \subset X $, and \ $X\ \sn\ A, \forall A \subset X$.
\item[{\rm\bf (snN1)}] $A \sn B \Leftrightarrow B \sn A$.
\item[{\rm\bf (snN2)}] $A\ \sn\ B$ implies $A\ \cap\ B\neq \emptyset$. 
\item[{\rm\bf (snN3)}] If $\{B_i\}_{i \in I}$ is an arbitrary family of subsets of $X$ and  $A \sn B_{i^*}$ for some $i^* \in I \ $ such that $\Int(B_{i^*})\neq \emptyset$, then $  \ A \sn (\bigcup_{i \in I} B_i)$ 
\item[{\rm\bf (snN4)}]  $\mbox{int}A\ \cap\ \mbox{int} B \neq \emptyset \Rightarrow A\ \sn\ B$.  
\qquad \textcolor{blue}{$\blacksquare$}
\end{description}

\noindent When we write $A\ \sn\ B$, we read $A$ is \emph{strongly near} $B$ ($A$ \emph{strongly contacts} $B$).  The notation $A\ \sfar\ B$ reads $A$ is not strongly near $B$ ($A$ does not \emph{strongly contact} $B$). For each \emph{strong proximity} (\emph{strong contact}), we assume the following relations:
\begin{description}
\item[{\rm\bf (snN5)}] $x \in \Int (A) \Rightarrow x\ \sn\ A$ 
\item[{\rm\bf (snN6)}] $\{x\}\ \sn \{y\}\ \Leftrightarrow x=y$  \qquad \textcolor{blue}{$\blacksquare$} 
\end{description}

For strong proximity of the nonempty intersection of interiors, we have that $A \sn B \Leftrightarrow \Int A \cap \Int B \neq \emptyset$ or either $A$ or $B$ is equal to $X$, provided $A$ and $B$ are not singletons; if $A = \{x\}$, then $x \in \Int(B)$, and if $B$ too is a singleton, then $x=y$. It turns out that if $A \subset X$ is an open set, then each point that belongs to $A$ is strongly near $A$.  The bottom line is that strongly near sets always share points, which is another way of saying that sets with strong contact have nonempty intersection.   Let $\near$ denote a traditional proximity relation~\cite{Naimpally70withWarrack}.
$\mbox{}$\\
\vspace{3mm}  

\begin{figure}[!ht]
\centering
\includegraphics[width=65mm]{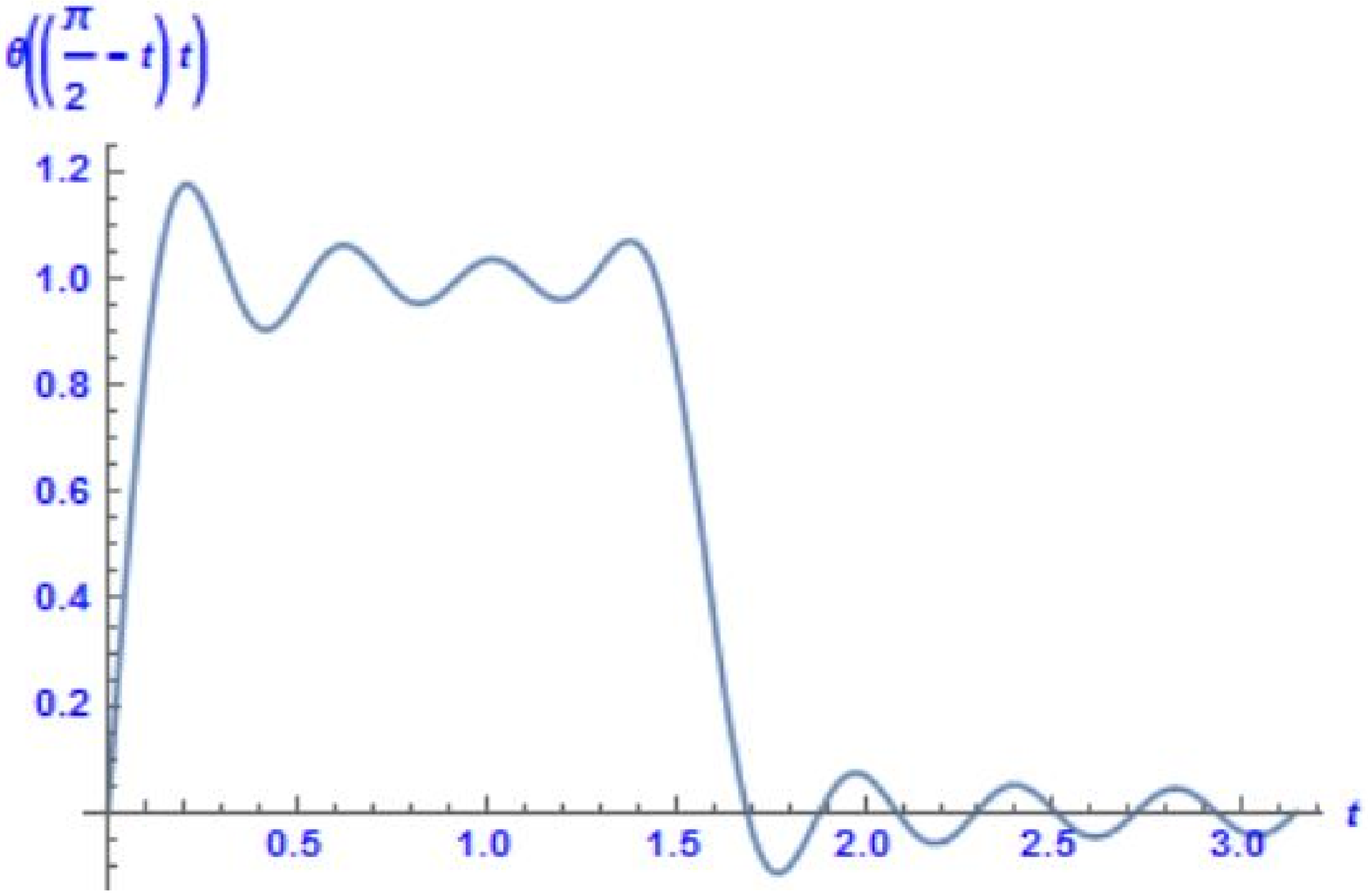}
\caption[]{Overlapping Sets:\\ $A = \left\{(x,0): 0.1\leq x\leq 3\right\},B = \left\{(x,\frac{2}{\pi}sin(x))+\cdots:0.1\leq x\leq 3\right\}$}
\label{fig:nearSetsStrong}
\end{figure}

Next, consider a proximal form of a Sz\'{a}z relator~\cite{Szaz1987}.  A \emph{proximal relator} $\mathscr{R}$ is a set of relations on a nonempty set $X$~\cite{Peters2016relator}.  The pair $\left(X,\mathscr{R}\right)$ is a proximal relator space.  The connection between $\sn$ and $\near$ is summarized in Prop.~\ref{thm:sn-implies-near}.

\begin{proposition}\label{thm:sn-implies-near}{\rm \cite{PetersTozzi2016arXivWiredFriend}}
Let $\left(X,\left\{\near,\dnear,\sn\right\}\right)$ be a proximal relator space, $A,B\subset X$.  Then 
\begin{compactenum}[1$^o$]
\item $A\ \sn\ B \Rightarrow A\ \near\ B$.
\item $A\ \sn\ B \Rightarrow A\ \dnear\ B$.
\end{compactenum}
\end{proposition}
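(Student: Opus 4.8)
The plan is to derive both implications from a single structural fact: axiom (snN2) forces strongly near sets to actually meet, and the remaining work is just feeding that nonempty intersection into the appropriate ``intersection implies nearness'' axiom. So the proof will be short once the axioms are lined up, and the only genuine content is the passage from spatial overlap to descriptive overlap in the second claim.

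For $1^o$, I would begin with $A\ \sn\ B$ and apply (snN2) to obtain $A\cap B\neq\emptyset$. Since $\near$ is assumed to be an element of the relator $\mathscr{R}$ and hence a Lodato proximity, axiom (P2) applies and gives $A\ \near\ B$ at once. The only thing to verify here is that (P2) is legitimately available, which it is by the hypothesis that $(X,\{\near,\dnear,\sn\})$ is a proximal relator space whose relations satisfy the stated axioms.

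For $2^o$, I would again invoke (snN2) on $A\ \sn\ B$ to pick a point $x\in A\cap B$. The key small step is to promote this to a point of the descriptive intersection: from $x\in A$ we get $\Phi(x)\in\Phi(A)$, and from $x\in B$ we get $\Phi(x)\in\Phi(B)$, so by the definition of $\dcap$ we have $x\in A\ \dcap\ B$, whence $A\ \dcap\ B\neq\emptyset$. Applying the descriptive Lodato axiom (dP2) then yields $A\ \dnear\ B$. The argument has no real obstacle; the one place to be careful is precisely this reduction, and it is immediate from the definitions of $\Phi(A)$ and of $\dcap$. It is worth noting that $1^o$ and $2^o$ together say that $\sn$ is the finest of the three relations in this relator, which dovetails with the preceding proposition asserting $A\ \dnear\ B\Rightarrow A\ \dcap\ B\neq\emptyset$.
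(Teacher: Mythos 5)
Your argument is correct: (snN2) gives $A\cap B\neq\emptyset$, axiom (P2) then yields $A\ \near\ B$, and observing that any $x\in A\cap B$ satisfies $\Phi(x)\in\Phi(A)$ and $\Phi(x)\in\Phi(B)$, hence lies in $A\ \dcap\ B$, lets (dP2) deliver $A\ \dnear\ B$. The paper itself states this proposition without proof, importing it from the cited reference, and your derivation is exactly the intended one-line-per-claim argument from the axioms listed in the Preliminaries, so there is nothing to add.
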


\begin{example}
Let $X$ be a topological space endowed with the strong proximity $\sn$ and $A = \left\{(x,0): 0.1\leq x\leq 3\right\},B = \left\{(x,\frac{2}{\pi}sin(x))+\cdots:0.1\leq x\leq 3\right\}$.  The plot displays a curve connecting points in a Fourier sine series:
\[
\frac{2sin(t)}{\pi} + \frac{2sin(2t)}{\pi} + \frac{2sin(3t)}{3\pi} + \frac{2sin(5t)}{5\pi} + \frac{2sin(6t)}{3\pi} + \frac{2sin(7t)}{7\pi}+\cdots
\]
 In this case, $A,B$ represented by Fig.~\ref{fig:nearSetsStrong} are strongly near sets with many regions in common along the horizontal axis between 0 and 3.0.  
\qquad \textcolor{blue}{$\blacksquare$}
\end{example}

The descriptive strong proximity $\snd$ is the descriptive counterpart of $\sn$.
To obtain a \emph{descriptive strong Lodato proximity} (denoted by \emph{\bf dsn}), we swap out $\dnear$ in each of the descriptive Lodato axioms with the descriptive strong proximity $\snd$.  

Let $X$ be a topological space, $A, B, C \subset X$ and $x \in X$.  The relation $\snd$ on the family of subsets $2^X$ is a \emph{descriptive strong Lodato proximity}, provided it satisfies the following axioms.
\vspace{2mm}
\begin{description}
\item[{\rm\bf (dsnP0)}] $\emptyset\ {\sdfar}\ A, \forall A \subset X $, and \ $X\ \snd\ A, \forall A \subset X$.
\item[{\rm\bf (dsnP1)}] $A\ \snd\ B \Leftrightarrow B\ \snd\ A$.
\item[{\rm\bf (dsnP2)}] $A\ \snd\ B$ implies $A\ \dcap\ B\neq \emptyset$.  
\item[{\rm\bf (dsnP4)}] $\mbox{int}A\ \dcap\ \mbox{int} B \neq \emptyset \Rightarrow A\ \snd\ B$.  
\qquad \textcolor{blue}{$\blacksquare$}
\end{description}

\noindent When we write $A\ \snd\ B$, we read $A$ is \emph{descriptively strongly near} $B$.
For each \emph{descriptive strong proximity}, we assume the following relations:
\begin{description}
\item[{\rm\bf (dsnP5)}] $\Phi(x) \in \Phi(\Int (A)) \Rightarrow x\ \snd\ A$. 
\item[{\rm\bf (dsnP6)}] $\{x\}\ \snd\ \{y\} \Leftrightarrow \Phi(x) = \Phi(y)$.  
\qquad \textcolor{blue}{$\blacksquare$} 
\end{description}

So, for example, if we take the strong proximity related to non-empty intersection of interiors, we have that $A\ \snd\ B \Leftrightarrow \Int A\ \dcap\ \Int B \neq \emptyset$ or either $A$ or $B$ is equal to $X$, provided $A$ and $B$ are not singletons; if $A = \{x\}$, then $\Phi(x) \in \Phi(\Int(B))$, and if $B$ is also a singleton, then $\Phi(x)=\Phi(y)$. 

The connections between $\snd,\dnear$ are summarized in Prop.~\ref{thm:sn-implies-dnear}.  
 
\begin{proposition}\label{thm:sn-implies-dnear}{\rm \cite{PetersTozzi2016arXivWiredFriend}}
Let $\left(X,\left\{\sn,\dnear,\snd\right\}\right)$ be a proximal relator space, $A,B\subset X$.  Then 
\begin{compactenum}[1$^o$]
\item For $A,B$ not equal to singletons, $A\ \snd\ B \Rightarrow \Int A\ \dcap\ \Int B\neq \emptyset \Rightarrow \Int A\ \dnear\ \Int B$.
\item $A\ \sn \ B \Rightarrow (\Int A\ \dcap\ \Int B)\ \snd \ B$.
\item $A\ \snd\ B \Rightarrow A\ \dnear\ B$.
\end{compactenum}
\end{proposition}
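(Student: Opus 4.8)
The plan is to dispatch the three assertions in the order $3^o$, $1^o$, $2^o$, since this is the order of increasing reliance on the interior-based working descriptions of $\snd$ and $\sn$ recalled just before the statement; throughout I will assume we are in the ``generic'' situation (regions with nonempty interior, neither region a singleton nor equal to $X$) guaranteed by the physical-geometry hypothesis that every region has nonzero area, and handle the whole-space case separately via (dsnP0).

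For $3^o$ I would simply chain two axioms: from $A\ \snd\ B$, axiom (dsnP2) yields $A\ \dcap\ B \neq \emptyset$, and then the descriptive Lodato axiom (dP2), applied to the sets $A$ and $B$, gives $A\ \dnear\ B$. Nothing further is needed. For $1^o$, the second implication $\Int A\ \dcap\ \Int B \neq \emptyset \Rightarrow \Int A\ \dnear\ \Int B$ is again nothing but (dP2), now applied to the two sets $\Int A$ and $\Int B$. The first implication $A\ \snd\ B \Rightarrow \Int A\ \dcap\ \Int B \neq \emptyset$ is essentially the definitional characterisation of the canonical descriptive strong proximity recalled in the paragraph after the (dsnP$\ast$) axioms: for $A,B$ not singletons, $A\ \snd\ B$ holds iff $\Int A\ \dcap\ \Int B \neq \emptyset$ or one of $A,B$ equals $X$; in the degenerate case $A = X$ (resp.\ $B = X$) one has $\Int A = X \supseteq \Int B$, so $\Int B \subseteq \Int A\ \dcap\ \Int B$, which is nonempty since $\Int B \neq \emptyset$.

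For $2^o$ I would start from the analogous working description of the spatial strong proximity $\sn$: for $A,B$ not singletons and neither equal to $X$, $A\ \sn\ B$ gives $\Int A \cap \Int B \neq \emptyset$. A direct check against the definition of $\dcap$ shows $\Int A \cap \Int B \subseteq \Int A\ \dcap\ \Int B$, and since $\Int A \cap \Int B$ is open it is in fact contained in $\Int(\Int A\ \dcap\ \Int B)$. Choosing any $x \in \Int A \cap \Int B$, we then have $x \in \Int(\Int A\ \dcap\ \Int B)$ and $x \in \Int B$, hence $x \in \Int(\Int A\ \dcap\ \Int B)\ \dcap\ \Int B$, so this set is nonempty; axiom (dsnP4), applied with the sets $\Int A\ \dcap\ \Int B$ and $B$, now delivers $(\Int A\ \dcap\ \Int B)\ \snd\ B$. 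The cases in which $A$ or $B$ equals $X$ follow at once from (dsnP0) together with (dsnP1).

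The only real subtlety — hence the ``main obstacle'' — is the bookkeeping of the degenerate cases (singletons, sets equal to the whole space, empty interiors): the crisp statements hold verbatim only in the generic situation, and elsewhere one must fall back on (dsnP0)/(dsnP1). A secondary point worth being careful about is that the descriptive strong Lodato relation is \emph{not} assumed monotone (there is no (dsnP3)), so in $2^o$ one genuinely needs the interior argument via (dsnP4) rather than ``enlarging'' a point relation $\{x\}\ \snd\ B$ obtained from (dsnP5). Everything else reduces to direct invocations of (dsnP0), (dsnP2), (dsnP4), (dP2) and the interior characterisations of $\sn$ and $\snd$.
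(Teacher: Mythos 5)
The paper does not actually prove this proposition: it is imported verbatim by citation from \cite{PetersTozzi2016arXivWiredFriend}, so there is no in-paper argument to compare yours against. Judged on its own terms, your reconstruction is sound and is essentially the only derivation available from the material in the Preliminaries. Your $3^o$ is exactly the two-axiom chain (dsnP2) followed by (dP2); the second implication of $1^o$ is again (dP2) applied to $\Int A$ and $\Int B$. You are also right to flag that the first implication of $1^o$ and all of $2^o$ are \emph{not} consequences of the axiom lists (dsnP0)--(dsnP6) and (snN0)--(snN6) alone: from (dsnP2) one only gets $A\ \dcap\ B\neq\emptyset$, and from (snN2) only $A\cap B\neq\emptyset$. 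One genuinely needs the ``nonempty intersection of interiors'' characterisations that the paper states only for a particular model of $\sn$ and $\snd$ (``so, for example, if we take the strong proximity related to non-empty intersection of interiors\dots''), and your proof correctly isolates that dependency. Your argument for $2^o$ --- that $\Int A\cap\Int B$ is an open subset of $\Int A\ \dcap\ \Int B$, hence lies in $\Int(\Int A\ \dcap\ \Int B)$, so that (dsnP4) applies to the pair $\left(\Int A\ \dcap\ \Int B,\,B\right)$ --- is correct and is the right way to avoid the missing monotonicity axiom, as you note.

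The one place where your write-up overpromises is the claim that the degenerate cases can be recovered ``at once from (dsnP0) together with (dsnP1).'' That works when $B=X$, but it cannot rescue the singleton case of $2^o$: if $A=\{x\}$ with $x\in\Int B$, the paper's convention still declares $A\ \sn\ B$, yet $\Int A$ may be empty, so $\Int A\ \dcap\ \Int B=\emptyset$, and (dsnP0) then asserts $\emptyset\ \sdfar\ B$, contradicting the conclusion of $2^o$. This is a defect of the proposition as stated (it carries no non-singleton hypothesis in $2^o$, unlike $1^o$) rather than of your argument, but you should say explicitly that the statement is false there instead of suggesting the axioms cover it.
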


\section{Region-Based Physical Geometry}

In this form of geometry, the analogue of an abstract point is a region of space with minimal but not zero area.  Let $X$ be a set of physical regions.  

\setlength{\intextsep}{0pt}

\begin{wrapfigure}[12]{R}{0.35\textwidth}
\begin{minipage}{5.2 cm}
\centering
\includegraphics[width=40mm]{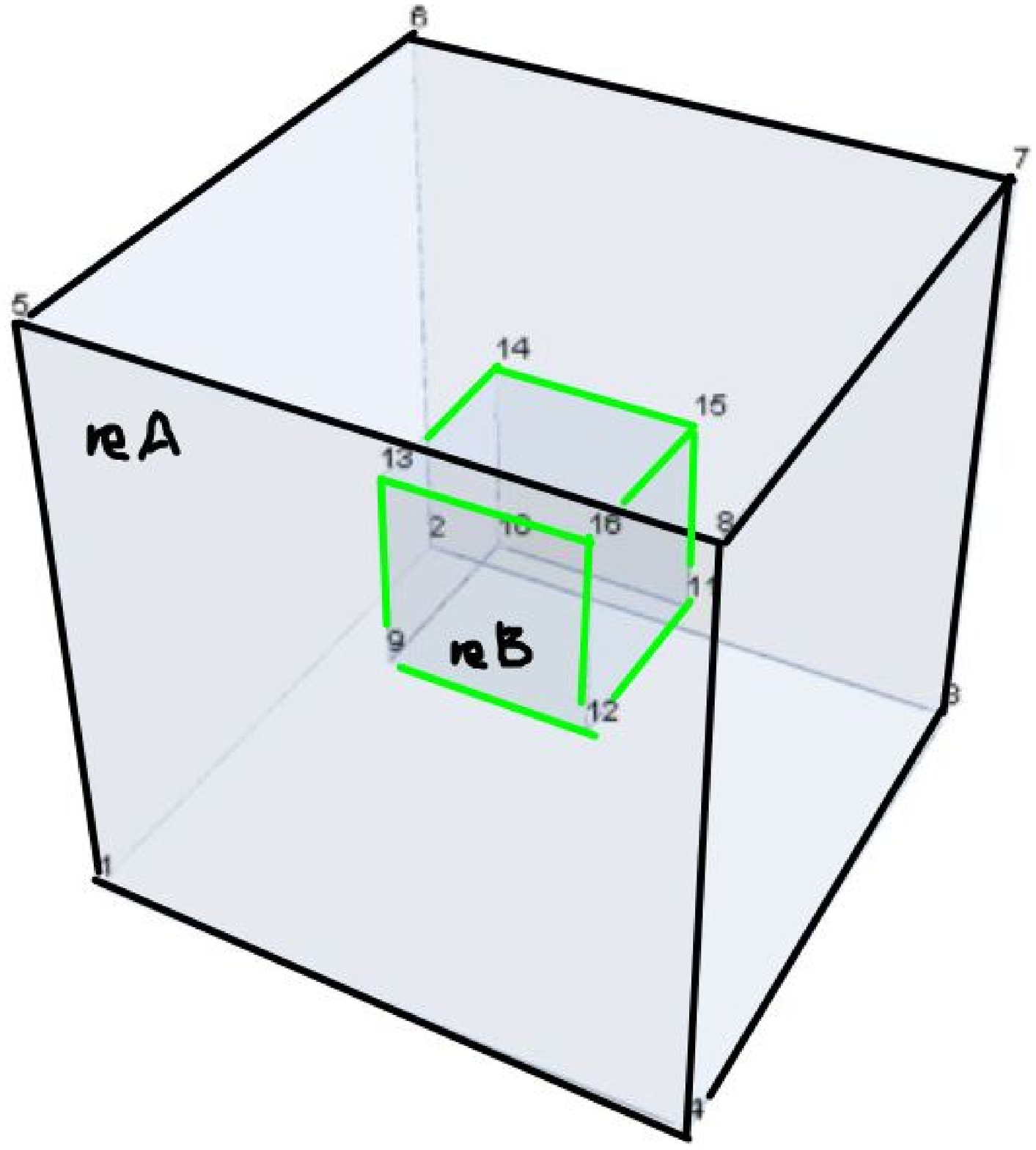}
\caption[]{$\mbox{}$\\ 3D Hole}
\label{fig:concentricRegions}
\end{minipage}
\end{wrapfigure}

A \emph{region} $A$ in a space $X$ is a collection of subregions.  A region is denoted either by $\re A$ or simply by $A$.  The coordinates of the center of mass of a region is the location of the region.  A \emph{boundary region} of $\re A$ is the set of regions that have an subregions in common with $\re A$.  A region is \emph{open}, provided the region does not include its boundary region. Let $\Int A$ denote the interior of region $A$, defined by 
$\Int A = \bigcup\left\{B\subset X: B\ \mbox{is open and}\ B\subset A\right\}$.  

A \emph{hole} is a closed region with an empty interior.  For example, the 3D cube $\re B$ in the interior of cube $\re A$ in Fig.~\ref{fig:concentricRegions} is an example of a hole, since $\Int(\re B) = \emptyset$ (the $\re B$ has only edges, {\em i.e.}, each of its faces has an empty interior bounded by edges).

A subregion (subset) of a region is denoted by an upper case letter, {\em e.g.}, $E\subset \re B$ or $E\subset B$.  A member (small subregion) of a region is denoted by a lower case letter, {\em e.g.}, $x\in \re A$ or $p\in A$.

Let $X$ be endowed with the Lodato proximity $\near$.  The Cech distance $D(A,B)$ between regions $A,B$ is defined by\\
$
D(A,B) = \mbox{inf}\left\{d(a,b): a\in A, b\in B\right\}.  
$
Then define $A\ \near\ B$, if and only if $D(A,B)= 0$~\cite{Naimpally70withWarrack}. The \emph{closure} of set $A\subset X$ (denoted $\cl A$) is the set $\cl A = \left\{x\in X: {x}\ \near\ A\right\}$.  Let $a\in A$.  The \emph{neighbourhood of} $a$ is the set $\nhbd_{_{\varepsilon}}(a)= \left\{x\in X: d(x,a) < \varepsilon \right\}$.  
The neighbourhood $\nhbd_{_{\varepsilon}}(a)$ is an open set such that each $x\in \nhbd_{_{\varepsilon}}(a)$ is \emph{sufficiently close} to $a$, {\em i.e.}, $d(x,a) < \varepsilon$.  
In general, a nonempty set $A$ is an \emph{open set}, if and only if all points $x\in X$ sufficiently close to $A$ belong to $A$~\cite{Bourbaki1966}. The \emph{boundary} of a set $A$ is the set $\bdy A = \left\{x\in X: \nhbd_{_{\varepsilon}}(x)\cap \left(A\ \cap\ X\setminus A\right)\right\}$.

\begin{proposition}{\rm \cite{Krantz2009}}\label{Krantz2009}
Let $X$ be a metric topological space, $\re A\subset X$.  $\cl A = A\cup \bdy A$.
\end{proposition}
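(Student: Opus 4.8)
The plan is to prove the two inclusions $\cl A \subseteq A \cup \bdy A$ and $A \cup \bdy A \subseteq \cl A$ separately, working directly from the definitions given in the excerpt: $\cl A = \{x \in X : x\ \near\ A\}$, where $x\ \near\ A$ iff $D(\{x\},A) = 0$, together with the neighbourhood characterisation $\nhbd_{\varepsilon}(a) = \{x \in X : d(x,a) < \varepsilon\}$ and the boundary set $\bdy A = \{x \in X : \nhbd_{\varepsilon}(x) \cap (A \cap X\setminus A)\}$, read in the standard way as: every $\varepsilon$-neighbourhood of $x$ meets both $A$ and $X \setminus A$.

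First I would handle $A \cup \bdy A \subseteq \cl A$. If $x \in A$, then $D(\{x\},A) \le d(x,x) = 0$, so $x\ \near\ A$ and hence $x \in \cl A$; this uses only axiom \textbf{(P2)} (or directly the infimum definition of the \v Cech distance). If $x \in \bdy A$, then for every $\varepsilon > 0$ the set $\nhbd_{\varepsilon}(x)$ meets $A$, so there is a point $a_\varepsilon \in A$ with $d(x,a_\varepsilon) < \varepsilon$; letting $\varepsilon \to 0$ gives $\inf\{d(x,a): a \in A\} = 0$, i.e. $D(\{x\},A) = 0$, so $x\ \near\ A$ and $x \in \cl A$.

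Next I would prove $\cl A \subseteq A \cup \bdy A$. Suppose $x \in \cl A$, so $D(\{x\},A) = 0$, and suppose $x \notin A$; the goal is to show $x \in \bdy A$. Since $x \notin A$, every neighbourhood $\nhbd_{\varepsilon}(x)$ contains the point $x$ itself, which lies in $X \setminus A$, so $\nhbd_{\varepsilon}(x) \cap (X\setminus A) \ne \emptyset$ for all $\varepsilon$. On the other hand, $D(\{x\},A) = \inf\{d(x,a) : a\in A\} = 0$ means that for every $\varepsilon > 0$ there is $a \in A$ with $d(x,a) < \varepsilon$, i.e. $\nhbd_{\varepsilon}(x) \cap A \ne \emptyset$. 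Hence every $\varepsilon$-neighbourhood of $x$ meets both $A$ and $X \setminus A$, which is exactly the condition $x \in \bdy A$. Combining the two cases, $x \in A \cup \bdy A$, completing this inclusion and the proof.

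The only genuinely delicate point is interpreting the paper's compressed notation for $\bdy A$: the displayed formula writes just ``$\nhbd_{\varepsilon}(x)\cap (A\cap X\setminus A)$'' without an explicit ``$\ne \emptyset$'' or quantifier on $\varepsilon$, and $A \cap X\setminus A$ is literally empty, so one must read it as the standard topological boundary, namely that $\nhbd_{\varepsilon}(x)$ has nonempty intersection with both $A$ and $X\setminus A$ for every $\varepsilon>0$. Once that reading is fixed, the argument is the routine metric-space fact that closure equals the union of a set with its boundary, and the \v Cech-distance formulation of $\near$ makes each step a one-line infimum estimate. I would also remark that separatedness (\textbf{(P5)}) and the region-specific hypotheses play no role here; only the metric structure and the definition of $\near$ via $D$ are needed.
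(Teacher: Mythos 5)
Your proof is correct, and it is worth noting that the paper itself gives no argument for this proposition at all: it is simply quoted from \cite{Krantz2009}. The closest thing to an in-paper proof is the argument for the analogous Lemma~\ref{lemma:bnd}, $\cl(\bnd A)=\bnd A\cup \bdy(\bnd A)$, and that argument runs along a genuinely different line from yours. There, one takes $x\notin \bnd A\cup\bdy(\bnd A)$, extracts a neighbourhood $\nhbd_{\varepsilon}(x)$ lying in the complement, and concludes that the complement is open, hence that $\bnd A\cup\bdy(\bnd A)$ is closed and therefore contains the closure; this has the side benefit of yielding Proposition~\ref{prop:clbndClosed} for free, but it presupposes that $\cl$ is the smallest closed superset, and its $\supseteq$ half as written only reaches the conclusion $\bnd A\supseteq \cl(\bnd A)$, which is not the needed inclusion. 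Your route is more elementary and more complete: you work pointwise from the \v{C}ech-distance definition of $\near$, getting $A\subseteq \cl A$ from $d(x,x)=0$, $\bdy A\subseteq \cl A$ from the infimum collapsing to $0$, and the reverse inclusion by observing that $x\in\cl A\setminus A$ forces every $\nhbd_{\varepsilon}(x)$ to meet both $A$ and $X\setminus A$. Your repair of the displayed boundary formula (reading ``$\nhbd_{\varepsilon}(x)\cap(A\cap X\setminus A)$'' as the requirement that every $\varepsilon$-neighbourhood meet both $A$ and $X\setminus A$) is the correct and necessary interpretation, and it is the same reading the paper's own lemma proof tacitly uses; with that reading fixed, your argument is complete and in fact tighter than the in-paper analogue.
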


For $A\in 2^X$, $\Phi(A)$ is a description of region $A$. A special bornology is given by the $\Phi-$bounded sets. We say that a subset $A$ is \emph{$\Phi-$bounded} (denoted by $\bnd A$), provided there exists $\varepsilon \in \mathbb{R}^+ $ such that $A \subseteq \{ B \in X : d(\Phi(B), \Phi(E))< \varepsilon\}$, where $d$ is a pseudo-metric on $\mathbb{R}^n$ and $E$ is a selected region in $X$.  The closure of a $\bnd A$ is defined by
\[
\cl\left(\bnd A\right) = \left\{x\in X: x\ \near\ \bnd A\right\},
\]
{\em i.e.}, $x\in cl\left(\bnd A\right)$, provided $\Phi(x) = \Phi(y)\ \mbox{for some}\ y\in \bnd A$.

\begin{lemma}\label{lemma:bnd}
Let $X$ be a metric topological space, $\bnd A\subset X$.  $\cl (\bnd A) = \bnd A\cup \bdy (\bnd A)$.
\end{lemma}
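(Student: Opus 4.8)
The plan is to reduce the statement to Proposition~\ref{Krantz2009}. The key observation is that $\bnd A$, although introduced through the $\Phi$-bounded bornology, is by construction simply a subset of $X$: it is the collection of those regions $B$ whose description $\Phi(B)$ lies within a fixed pseudo-metric ball about $\Phi(E)$ for a selected region $E$. Hence $\bnd A\in 2^X$, and since $X$ is assumed to be a metric topological space, Proposition~\ref{Krantz2009} applies verbatim with $\re A$ replaced by the set $\bnd A$, which immediately yields $\cl(\bnd A) = \bnd A\cup \bdy(\bnd A)$.

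To keep the argument self-contained I would instead prove the two inclusions directly, since the definitions of $\cl$ and $\bdy$ are at hand. For $\bnd A\cup \bdy(\bnd A)\subseteq \cl(\bnd A)$: every $x\in \bnd A$ satisfies $D(\{x\},\bnd A)=0$, hence $x\ \near\ \bnd A$ and $x\in \cl(\bnd A)$; and if $x\in \bdy(\bnd A)$ then every neighbourhood $\nhbd_{_{\varepsilon}}(x)$ meets $\bnd A$, so again $D(\{x\},\bnd A)=0$ and $x\in \cl(\bnd A)$. For the reverse inclusion $\cl(\bnd A)\subseteq \bnd A\cup \bdy(\bnd A)$: take $x\in \cl(\bnd A)$ with $x\notin \bnd A$; then $x\ \near\ \bnd A$ forces every $\nhbd_{_{\varepsilon}}(x)$ to intersect $\bnd A$, while trivially $x\in \nhbd_{_{\varepsilon}}(x)\cap (X\setminus \bnd A)$, so $x$ lies in $\bdy(\bnd A)$ by the definition of the boundary. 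Combining the two inclusions gives the claimed identity.

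The only point requiring genuine care is the compatibility of the two descriptions of $\cl(\bnd A)$ offered in the text, namely the metric one $\cl(\bnd A) = \left\{x\in X: x\ \near\ \bnd A\right\}$ and the descriptive paraphrase ``$\Phi(x)=\Phi(y)$ for some $y\in \bnd A$''. I would note that for $\Phi$-bounded sets these coincide once $\near$ is taken to be the pseudo-metric proximity induced on $X$ by pulling the pseudo-metric $d$ on $\mathbb{R}^n$ back along $\Phi$, so that $D(\{x\},\bnd A)=0$ is exactly $\Phi(x)\in \cl\,\Phi(\bnd A)$; on a finite feature space this reduces to $\Phi(x)=\Phi(y)$ for some $y\in \bnd A$. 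With that identification fixed, $\bdy(\bnd A)$ is computed with respect to the same proximity and the two-inclusion argument goes through unchanged. This bookkeeping, rather than any topological difficulty, is the main obstacle; once it is settled the lemma is essentially a restatement of Proposition~\ref{Krantz2009} for the particular subset $\bnd A$.
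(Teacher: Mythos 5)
Your proposal is correct, and it takes a recognizably different route from the paper's. You argue both inclusions by direct element-chasing: a point of $\bnd A$ or of $\bdy(\bnd A)$ has distance $0$ to $\bnd A$ and hence lies in $\cl(\bnd A)$, while a point of $\cl(\bnd A)\setminus \bnd A$ has every neighbourhood meeting both $\bnd A$ and $X\setminus\bnd A$ and so lies in $\bdy(\bnd A)$. The paper instead argues by contraposition throughout: for $\cl(\bnd A)\subseteq \bnd A\cup\bdy(\bnd A)$ it shows the complement of $\bnd A\cup\bdy(\bnd A)$ is open, so that set is closed and therefore contains the closure; for the reverse inclusion it starts from $x\not\in\cl(\bnd A)$ and produces a neighbourhood missing $\bnd A$. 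Your opening observation, that the lemma is literally Proposition~\ref{Krantz2009} applied to the subset $\bnd A\in 2^X$, is also apt and is something the paper does not exploit, even though it re-proves the identical identity. Two further points in your favour: the paper's second half concludes only with ``$\bnd A\supseteq\cl(\bnd A)$,'' which is not the inclusion $\bnd A\cup\bdy(\bnd A)\subseteq\cl(\bnd A)$ actually needed, so your direct verification of that inclusion is cleaner and actually complete; and your closing remark about reconciling the metric closure $\left\{x\in X: x\ \near\ \bnd A\right\}$ with the descriptive paraphrase via the pulled-back pseudo-metric addresses a genuine ambiguity that the paper leaves silent. The paper's complement-based argument buys a slightly more topological flavour (it exhibits $\bnd A\cup\bdy(\bnd A)$ as closed outright), while yours is more elementary and verifies the two inclusions without detouring through openness of complements.
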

\begin{proof}$\mbox{}$\\
$\subseteq$: Let $x\not\in \bnd A\cup \bdy (\bnd A)$.  Since $x$ is not in $\bdy (\bnd A)$, there is a neighbourhood $\nhbd(x)\cap \bnd A = \emptyset$ or $\nhbd(x)\cap \left(X\setminus \bnd A = \emptyset\right)$.   We know that $x\not\in \bnd A$.  Consequently, $\nhbd(x)\subseteq X\setminus \bnd A$, {\em i.e.}, $\nhbd(x)$ is a subset of the complement of $\bnd A$.  Then  
\[
\nhbd(x)\ \mbox{is not in}\ \bnd A\cup \bdy (\bnd A),\ \mbox{for some}\ \varepsilon > 0.
\]
Also, every $y$ in $\nhbd(x)$ is not in $\bnd A\cup \bdy (\bnd A)$.  Thus, the complement of $\bnd A\cup \bdy (\bnd A)$ is open and $\bnd A\cup \bdy (\bnd A)$ is closed.  Hence, $\bnd A\cup \bdy (\bnd A)\supseteq \cl (\bnd A)$.\\
$\supseteq$: Assume $x\not\in \cl (\bnd A)$.  Since the complement of $\cl (\bnd A)$ (denoted by $X\setminus \cl (\bnd A)$) is open, $\nhbd(x)\subseteq X\setminus \cl (\bnd A)$.  Then, $\nhbd(x)\cap \bnd A = \emptyset$.  Hence, $\bnd A\supseteq \cl (\bnd A)$.
\end{proof}

In a metric topological space $X$, a set $A\subset X$ is closed, provided its complement is open~\cite{Krantz2009}.

\begin{proposition}\label{prop:clbndClosed}
Let $X$ be a metric topological space, $\bnd A\subset X$.  $\cl (\bnd A)$ is closed.
\end{proposition}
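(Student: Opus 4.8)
The plan is to read this off directly from Lemma~\ref{lemma:bnd}. By that lemma we already have the identity $\cl(\bnd A) = \bnd A \cup \bdy(\bnd A)$, so proving the proposition amounts to showing that the set $\bnd A \cup \bdy(\bnd A)$ is closed in $X$, i.e.\ that its complement $X \setminus \left(\bnd A \cup \bdy(\bnd A)\right)$ is open. This is exactly the content of the ``$\subseteq$'' half of the proof of Lemma~\ref{lemma:bnd}: there one shows that for every $x \notin \bnd A \cup \bdy(\bnd A)$ there is an $\varepsilon > 0$ with $\nhbd_{_{\varepsilon}}(x) \subseteq X \setminus \left(\bnd A \cup \bdy(\bnd A)\right)$, so that every point of the complement is interior to it. Thus the first step is simply to invoke Lemma~\ref{lemma:bnd}, and the second step is to quote (or re-run) that neighbourhood argument to conclude the complement is open, hence $\cl(\bnd A)$ is closed. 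This uses the convention, recalled just before the statement, that in a metric topological space a set is closed precisely when its complement is open.

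Alternatively, and perhaps more in the spirit of the proximity framework, I would prove idempotence of the proximity closure directly: $\cl\!\left(\cl(\bnd A)\right) = \cl(\bnd A)$. The inclusion $\supseteq$ is immediate since $x\ \near\ \bnd A$ forces $x\ \near\ \cl(\bnd A)$. For $\subseteq$, take $x$ with $x\ \near\ \cl(\bnd A)$; every $b \in \cl(\bnd A)$ satisfies $\{b\}\ \near\ \bnd A$ by the definition of closure, so the Lodato axiom (P4) applied with $A = \{x\}$, $B = \cl(\bnd A)$, $C = \bnd A$ yields $x\ \near\ \bnd A$, i.e.\ $x \in \cl(\bnd A)$. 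Since the topology associated with $(X,\near)$ declares closed exactly those sets coinciding with their own closure, $\cl(\bnd A)$ is closed. Either route is short; the first is the cleaner since Lemma~\ref{lemma:bnd} has already done the work.

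I do not expect a genuine obstacle here: the proposition is essentially a corollary of Lemma~\ref{lemma:bnd}. The only point requiring a moment's care is bookkeeping, namely confirming that the $\Phi$-bounded set $\bnd A$ is treated as an ordinary subset of $X$ for these purposes, and that the three notions in play --- ``closed'', ``equal to its own closure'', and ``complement open'' --- are mutually consistent in the metric topological space $X$ (as recorded in Proposition~\ref{Krantz2009} and the sentence preceding the statement). Once that identification is made, the result follows in one line.
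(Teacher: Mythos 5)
Your primary argument is exactly the paper's proof: the paper simply notes that, from the proof of Lemma~\ref{lemma:bnd}, the complement of $\cl(\bnd A)=\bnd A\cup\bdy(\bnd A)$ is open, hence $\cl(\bnd A)$ is closed. Your alternative route via idempotence of the proximity closure and Lodato axiom (P4) is a reasonable bonus, but the first route is what the paper does and is all that is needed.
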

\begin{proof}
From the proof of Lemma~\ref{lemma:bnd}, $\cl (\bnd A)$ is closed, since its complement is open.
\end{proof}

\begin{theorem}\label{lemma:closureAequalsA}
Let $X$ be a metric topological space, $\bnd A\subset X$.  $\cl (\bnd A) = \bnd A$.
\end{theorem}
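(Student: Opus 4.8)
The plan is to prove the two inclusions $\bnd A \subseteq \cl(\bnd A)$ and $\cl(\bnd A) \subseteq \bnd A$ separately, with essentially all of the content living in the second. The first is immediate, since every region of a set lies in that set's closure; in fact it is already recorded in Lemma~\ref{lemma:bnd}, which gives $\cl(\bnd A) = \bnd A \cup \bdy(\bnd A)$. So I would begin by citing Lemma~\ref{lemma:bnd} to reduce the theorem to the single containment $\bdy(\bnd A) \subseteq \bnd A$: once the boundary of $\bnd A$ is shown to lie inside $\bnd A$, the union collapses to $\bnd A$ and the claimed equality follows. The fact that $\cl(\bnd A)$ is closed, recorded in Proposition~\ref{prop:clbndClosed}, would serve only as a consistency check and not as the engine of the proof, since a closure is closed for trivial reasons and that alone cannot force $\cl(\bnd A) = \bnd A$.

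The heart of the argument is the key step that every closure point of $\bnd A$ already belongs to $\bnd A$. Here I would use the characterization of nearness for $\Phi$-bounded sets supplied just before the lemma, namely that $x\ \near\ \bnd A$ holds exactly when $\Phi(x) = \Phi(y)$ for some $y \in \bnd A$. Fixing $x \in \cl(\bnd A)$, we have $x\ \near\ \bnd A$, hence $\Phi(x) = \Phi(y)$ for a suitable $y \in \bnd A$. Because membership in $\bnd A$ is decided solely by the inequality $d(\Phi(\cdot), \Phi(E)) < \varepsilon$ against the fixed reference region $E$, and $d$ is a pseudometric on $\mathbb{R}^n$, matching descriptions force $d(\Phi(x), \Phi(E)) = d(\Phi(y), \Phi(E)) < \varepsilon$. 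Thus $x \in \bnd A$, which yields $\cl(\bnd A) \subseteq \bnd A$ and, together with the trivial reverse inclusion, the desired equality.

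The step I expect to be the main obstacle is precisely the passage from ``$x$ is a closure point of $\bnd A$'' to ``$\Phi(x)$ is realized by an actual member of $\bnd A$''. A naive limiting argument, approximating $x$ by members $y_n \in \bnd A$ and passing to the limit in $d(\Phi(y_n), \Phi(E)) < \varepsilon$, delivers only the non-strict bound $d(\Phi(x), \Phi(E)) \leq \varepsilon$, and so cannot by itself exclude boundary descriptions sitting exactly at radius $\varepsilon$. The argument must therefore rest on the description-matching characterization of $\near$ for $\Phi$-bounded sets rather than on continuity of $\Phi$, since that characterization is what guarantees the closure introduces no genuinely new descriptions. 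Making that characterization precise for the set $\bnd A$, and verifying it is legitimately available rather than silently assumed, is the delicate point I would pin down before assembling the two inclusions above.
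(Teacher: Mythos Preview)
Your approach is genuinely different from the paper's. The paper argues both inclusions by contraposition via neighborhoods: for $\cl(\bnd A) \subseteq \bnd A$ it takes $x \notin \bnd A$ and asserts, without further justification, that some $\nhbd(x)$ misses $\bnd A$, concluding that the complement of $\bnd A$ is open; for the reverse inclusion it takes $x \notin \cl(\bnd A)$, uses openness of the complement of the closure to obtain a neighborhood disjoint from $\bnd A \cup \bdy(\bnd A)$, and deduces $x \notin \bnd A$. In neither direction does the paper invoke the description-matching characterization of $\cl(\bnd A)$ or the defining inequality $d(\Phi(\cdot),\Phi(E)) < \varepsilon$; the argument is a purely topological template that essentially replicates the shape of the proof of Lemma~\ref{lemma:bnd}. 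Your route, by contrast, is the only one that actually uses the specific structure of $\bnd A$: you leverage the stated characterization $x\ \near\ \bnd A \Leftrightarrow \Phi(x) = \Phi(y)$ for some $y \in \bnd A$ to transport the $\varepsilon$-inequality from $y$ to $x$. This buys you a concrete reason why $\bnd A$ should be closed, whereas the paper's neighborhood step is asserted rather than derived. The caveat you flag---whether membership in $\bnd A$ is literally decided by $d(\Phi(\cdot),\Phi(E)) < \varepsilon$, or whether $\bnd A$ merely sits inside that descriptive ball---is real, and the paper does not resolve it either; under the literal definition of ``$\Phi$-bounded'' as a containment condition, the theorem would fail for arbitrary $\Phi$-bounded sets, so your reading (that $\bnd A$ denotes the descriptive ball itself) is the one under which the result is true.
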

\begin{proof}$\mbox{}$\\
$\subseteq$: Let $x\not\in \bnd A$.  Since $x$ is not in $\bnd A$, there is a neighbourhood $\nhbd(x)\cap \bnd A = \emptyset$ or $\nhbd(x)\cap \left(X\setminus \bnd A\right) = \emptyset$.   Consequently, $\nhbd(x)\subseteq \left(X\setminus \bnd A\right)$, {\em i.e.}, $\nhbd(x)$ is a subset of the complement of $\bnd A$.  Also, every $y$ in $\nhbd(x)$ is not in $\bnd A$.  Thus, the complement of $\bnd A$ is open and $\bnd A$ is closed.  Hence, $\bnd A\supseteq \cl (\bnd A)$.\\
$\supseteq$: Assume $x\not\in \cl (\bnd A)$.  Since the complement of $\cl (\bnd A)$ is open, $\nhbd(x)\subseteq X\setminus \cl (\bnd A)$ and $\nhbd(x)\subset X\setminus \bnd A$.  Then, $\nhbd(x)\cap \left(\bnd A\cup \bdy (\bnd A)\right) = \emptyset$.  Consequently, $x\not\in \left(\bnd A\cup \bdy (\bnd A)\right)$.  Hence, $\left(\bnd A\cup \bdy (\bnd A)\right)\subseteq \cl (\bnd A)$ and $\bnd A\subseteq \cl (\bnd A)$.
\end{proof}

\setlength{\intextsep}{0pt}

\begin{wrapfigure}[19]{L}{0.55\textwidth}
\begin{minipage}{7.2 cm}
\centering
\includegraphics[width=70mm]{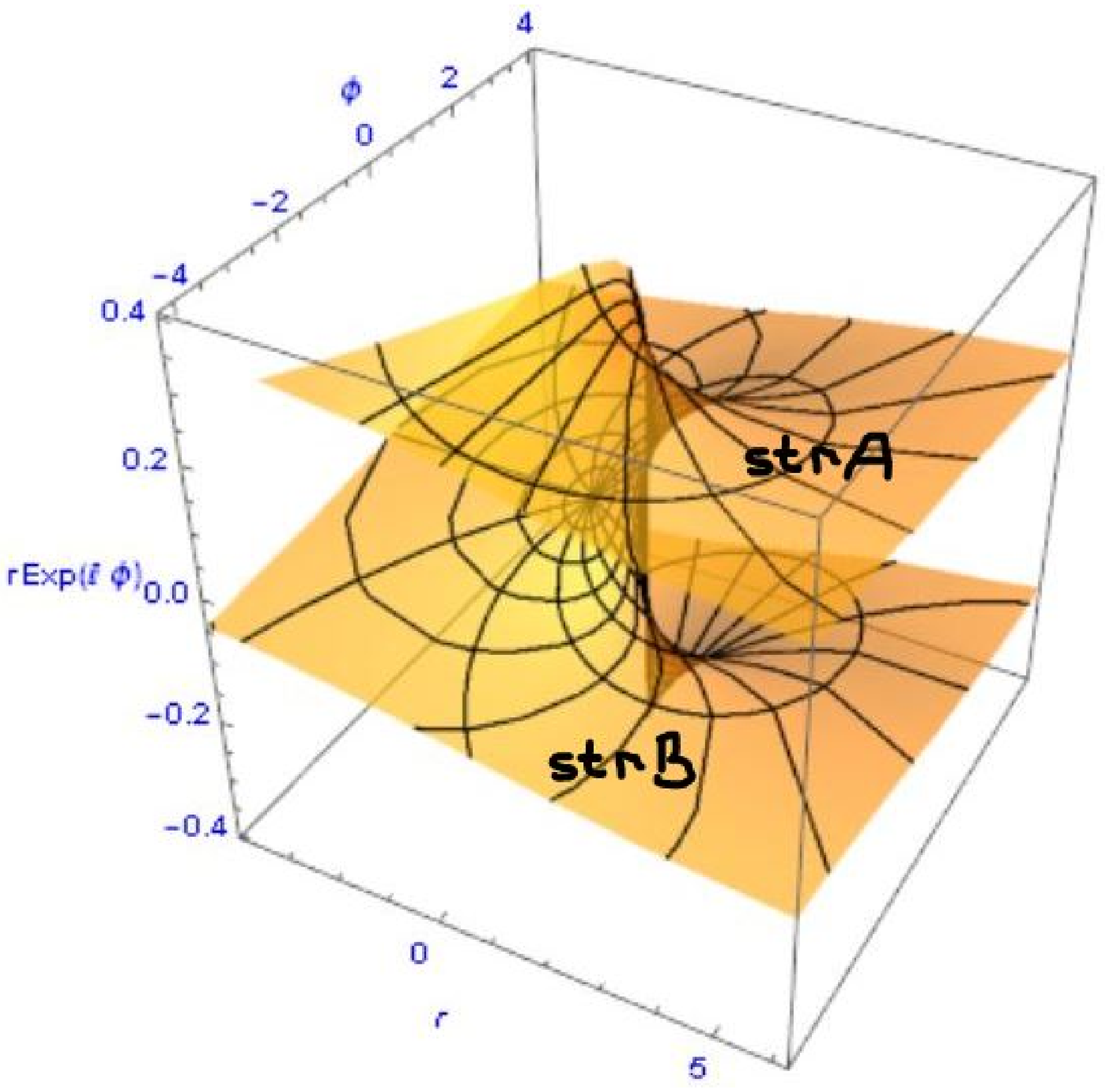}
\caption[]{$\mbox{}$\\ Nome strings}
\label{fig:strings0}
\end{minipage}
\end{wrapfigure} 

The intersection of regions $A,B\in X$ (denoted by $A\cap B$) is the set of all subregions common to $A$ and $B$.  The diameter of a region $A$ is the maximum distance between a pair of members of $A$.  
Let $p_1,\dots,p_{i-1},p_i,p_{i+1},\dots,p_k$ be the locations of the centers of mass of $n$ regions.  Let $\overline{p_i,p_{i+1}}$ be a line segment between $p_i,p_{i+1}$.  A \emph{path} is defined by a sequence of line segments such that
$
\left\{\overline{p_1,p_{2}},\dots,\overline{p_{i-1},p_i},\dots,\overline{p_{k-1},p_k}\right\}.   
$
Regions $\re A, \re B$ are \emph{connected}, provided $\re A\ \sn\ \re B$, {\em i.e.}, $\re A, \re B$ have members in common.  A \emph{path} between regions is a sequence of pairwise-connected regions.  That is, $\re A, \re B$ are adjacent members of a path, provided $\re A\ \sn\ \re B$. Regions $\re A, \re B$ are \emph{path-connected}, provided there is a path between $\re A$ and $\re B$.  Regions $\re A, \re B$ are contiguous, provided $\re B\cap \re A\neq \emptyset$, {\em i.e.}, $\re A, \re B$ have a common subregion.  A \emph{closed region} is identified with its closure.  The closure of a region $A$ (denoted by $\cl A$) is defined by
\[
\cl A = \left\{B\in X: B\cap A\neq \emptyset \right\}\ \mbox{(Closure of a region)}.
\]
Let $\emptyset$ denote the empty set, {\em i.e.}, a set containing no regions.

A \emph{surface} is a compact set of connected regions with a boundary.   A surface in physical geometry contrasts with a surface in abstract geometry.  For example, an Andrews surface is a compact two-manifold without a boundary~\cite{Andrews1988AMMsurface}.  A space is \emph{compact}, provided every open cover has a finite subcover~\cite[\S 17.1]{Willard1970}.  A \emph{cover} of a set $A$ is a collection of open sets $E$ whose union is a superset of $A$, {\em i.e.}, $A \subseteq \bigcup E$.  Let $X$ be a set of regions.  The space $X$ is connected, provided any two regions of $X$ can be joined by a region such as a line segment.
A \emph{manifold} is a topological space that is locally Euclidean~\cite{Rowland2016Manifold,Lee2013smoothManifold}.

\begin{proposition}
Every nonempty region is connected.
\end{proposition}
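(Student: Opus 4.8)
The plan is to work from the definition of connectedness for a space of regions given just above the statement: a space $X$ of regions is connected provided any two of its regions can be joined by a region, and since a \emph{path} between regions is a sequence of pairwise-connected regions, this is equivalent to requiring that any two regions be path-connected. A nonempty region $\re A$ is by definition a collection of subregions, hence is itself such a space of regions; so I read ``$\re A$ is connected'' as the assertion that any two subregions of $\re A$ are path-connected inside $\re A$.

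First I would fix a nonempty region $\re A$ and two arbitrary subregions $B,C\subseteq \re A$. The crucial observation is that $\re A$ is a subregion of itself that contains both $B$ and $C$, so $B\cap \re A = B\neq\emptyset$ and $C\cap \re A = C\neq\emptyset$; that is, $B$ and $\re A$ have members (subregions) in common, and likewise $\re A$ and $C$. By the identification of strong nearness with ``having members in common'' recorded in the text for regions --- equivalently, by axiom (snN4) together with the fact that a physical region carries nonzero area, so $\Int B\cap \Int \re A\neq\emptyset$ --- this yields $B\ \sn\ \re A$ and $\re A\ \sn\ C$. Hence the sequence $\{B,\re A,C\}$ is a path of pairwise-connected subregions of $\re A$ joining $B$ to $C$, so $B$ and $C$ are path-connected in $\re A$. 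Since $B,C$ were arbitrary, $\re A$ is connected.

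I expect the only real obstacle to be pinning down which notion of ``connected'' is in force and discharging the degenerate cases. If one instead intends topological connectedness --- $\re A$ admits no partition into two nonempty relatively open subregions --- the plan would be to argue by contradiction: a partition $\re A = U\cup V$ into disjoint nonempty open subregions would force $U$ and $V$ to share no subregion, yet the boundary subregions of $U$ taken inside $\re A$ are members of $\re A$ and therefore lie in $U$ or $V$ while meeting both, using $\cl U = U\cup\bdy U$ in the style of Proposition~\ref{Krantz2009}; this contradiction would finish the argument. The subtlety in either route is a subregion $B$ that is a hole (empty interior) or a singleton, where (snN4) cannot be invoked through nonempty interior intersection and one must fall back on (snN5) or, more simply, on the ``members in common'' reading of connectedness stated in the text --- which is the route I would ultimately take.
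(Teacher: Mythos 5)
Your argument is correct under the definitions as literally stated, but it is a genuinely different route from the paper's. The paper's proof fixes a decomposition $A = B\cup C$ into \emph{adjacent} subregions and asserts that $B$ and $C$ ``join together to form $A$''; in other words, it derives connectedness from the mutual adjacency of the pieces of an internal decomposition. You instead take two arbitrary subregions $B,C\subseteq \re A$ and connect them through the ambient region itself, via $B\ \sn\ \re A$ and $\re A\ \sn\ C$, so the path is $\{B,\re A,C\}$. What your route buys is that it does not presuppose the existence of a decomposition into pairwise adjacent pieces --- a hypothesis the paper's proof simply posits (``Let $A$ contains subregions adjacent subregions such that $A=B\cup C$'') and which is, in effect, the content of the claim being proved. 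What it costs is that the connection is mediated by $\re A$ itself, which makes the conclusion somewhat degenerate: under your reading every collection of subregions is connected, including ones consisting of spatially far-apart pieces, whereas the paper's (unjustified) adjacency hypothesis is aiming at a stronger, internal notion of connectedness. Your closing caveats are well taken: the identification of $\sn$ with ``members in common'' is the paper's own usage for regions (despite axiom (snN4) formally requiring overlapping interiors), and flagging the hole/singleton cases where (snN4) cannot be invoked is a point the paper's proof does not address at all.
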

\begin{proof}
Let $A$ be a nonempty region.  Let $A$ contains subregions adjacent subregions such that $A = B\cup C$.  Then subregions $B$ and $C$ join together to form $A$.  Hence, the result follows. 
\end{proof}

\begin{proposition}
Every region in a physical geometry is compact.
\end{proposition}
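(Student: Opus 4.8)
The plan is to reduce the claimed compactness to the Heine--Borel property inside the canonical ambient space $\mathbb{R}^n$ in which the simplices of a physical region, and its feature vectors, already live. First I would record two structural facts about a region $A$ that the preceding development supplies. \emph{Boundedness:} the diameter of $A$ is, by definition, the maximum distance between a pair of members of $A$, so it is finite; hence $A$ is contained in a Euclidean ball of finite radius and is bounded. \emph{Closedness:} $A$ is $\Phi$-bounded, so $\bnd A = A$, and Theorem~\ref{lemma:closureAequalsA} gives $\cl(\bnd A) = \bnd A$; thus $A$ coincides with its own closure and is therefore closed (equivalently, its complement is open, as in the characterisation used just before Proposition~\ref{prop:clbndClosed}). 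Combining the two facts, $A$ is a closed and bounded subset of $\mathbb{R}^n$, so every open cover of $A$ has a finite subcover --- which is precisely the definition of compactness adopted in the text.

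As a second, more intrinsic justification I would add the finiteness argument built on the standing hypothesis that every subregion has \emph{minimal but non-zero area}. Fixing a lower bound $\varepsilon_0 > 0$ for the area of a subregion and letting $B$ be a ball of radius $\operatorname{diam} A$ containing $A$, any family of subregions of $A$ with pairwise-disjoint interiors has at most $\area(B)/\varepsilon_0$ members; in particular the maximal simplices making up $A$ form a finite collection, in agreement with the definition of a simplicial complex as a \emph{finite} collection of simplices. Each such simplex is the convex hull of finitely many linearly independent points of $\mathbb{R}^n$, hence is closed and bounded, hence compact by Heine--Borel; and a finite union of compact sets is compact. This yields the proposition again and also explains \emph{why} a physical region, unlike a region in abstract geometry, cannot fail to be compact.

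The main obstacle I expect is bookkeeping rather than mathematics: a physical region is a primitive, not a priori a subset of $\mathbb{R}^n$, so one must first invoke the canonical embedding of its simplices (and, through $\Phi$, of its feature data) into $\mathbb{R}^n$ before Heine--Borel is even available; and because subregions of a region need not be disjoint, the counting step must be phrased in terms of \emph{interiors} of subregions (or of the finitely many maximal simplices) rather than the raw collection of subregions. Once the embedding is fixed and the area bound is applied to interiors, the finiteness of the subcover --- and hence compactness --- follows with no further work.
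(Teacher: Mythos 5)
Your route is genuinely different from the paper's, and in one respect more honest: the paper's own proof takes an arbitrary cover of $A$ by open regions, adjoins the closure $E'=\cl E$ of one of them, and concludes that ``each cover of $A$ has a subcover'' --- it never exhibits a \emph{finite} subcover, which is the entire content of compactness. Your two arguments (Heine--Borel in $\mathbb{R}^n$, and a finite union of compact simplices obtained by bounding the number of subregions of minimal non-zero area inside a ball of radius $\operatorname{diam} A$) both aim at actual finiteness, and the second one is essentially self-contained once the region is embedded in $\mathbb{R}^n$ and taken to be a finite complex of \emph{closed} simplices.

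The genuine gap is your closedness step, and it affects both arguments. You assert that $A$ is $\Phi$-bounded, so $\bnd A = A$, and then invoke Theorem~\ref{lemma:closureAequalsA} to conclude $A$ is closed. But $\Phi$-boundedness is a condition in the feature space --- it requires $A \subseteq \{B : d(\Phi(B),\Phi(E)) < \varepsilon\}$ for some reference region $E$ --- and the paper never claims every region satisfies it. More importantly, the closure operator in Theorem~\ref{lemma:closureAequalsA} is the descriptive one ($x \in \cl(\bnd A)$ provided $\Phi(x)=\Phi(y)$ for some $y \in \bnd A$), so $\cl(\bnd A) = \bnd A$ says $A$ is closed under matching descriptions, not that $A$ is topologically closed in $\mathbb{R}^n$, which is what Heine--Borel needs. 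Finally, the paper explicitly admits \emph{open} regions (a region is open when it does not include its boundary region), and a bounded open subset of $\mathbb{R}^n$ is not compact; so the statement cannot go through for all regions unless you either restrict to closed regions ($\re A = \cl(\re A)$, as in Axiom PG.6) or identify each region with its closure before covering it. If you make that restriction explicit and run your area-counting argument on the finitely many maximal \emph{closed} simplices, the second argument closes the proof; the first, as written, does not.
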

\begin{proof}
Let $A$ be a physical region.  Let $X$ be a collection of connected open regions, {\em i.e.}, regions $E\subset X$ without boundaries.  Let $A$ be a proper subset of $X$.  Then $A \subseteq \mathop{\bigcup}\limits_{E\subset X} E$.  Let $C = \mathop{\bigcup}\limits_{E\subset X} E$.  Let $E'= \cl E$ for some $E'\in 2^X$.  Then $A \subseteq \mathop{\bigcup}\limits_{E\subset X} E\cup E'$.  Hence, each cover of $A$ has a subcover.  
\end{proof}

\begin{example} {\bf Sample Surfaces and Intersecting lines}.\\
\includegraphics[width=20mm]{2worldsheet}\ (Cylindrical surface),\ 
\includegraphics[width=20mm]{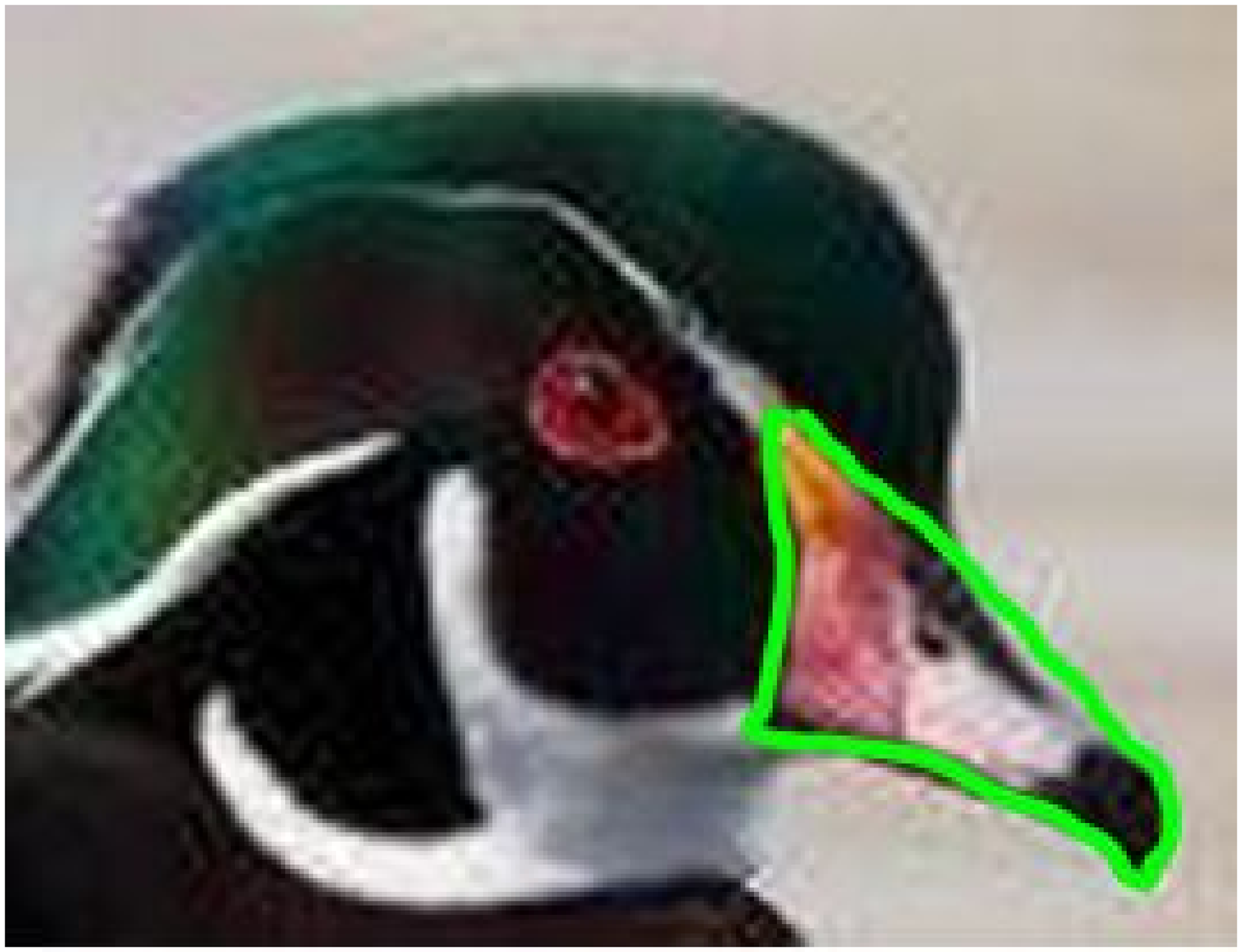}\ (Curved surface).  The bounded region of the wood duck's beak is a bounded surface delineated by the bounding green line.\\
\includegraphics[width=20mm]{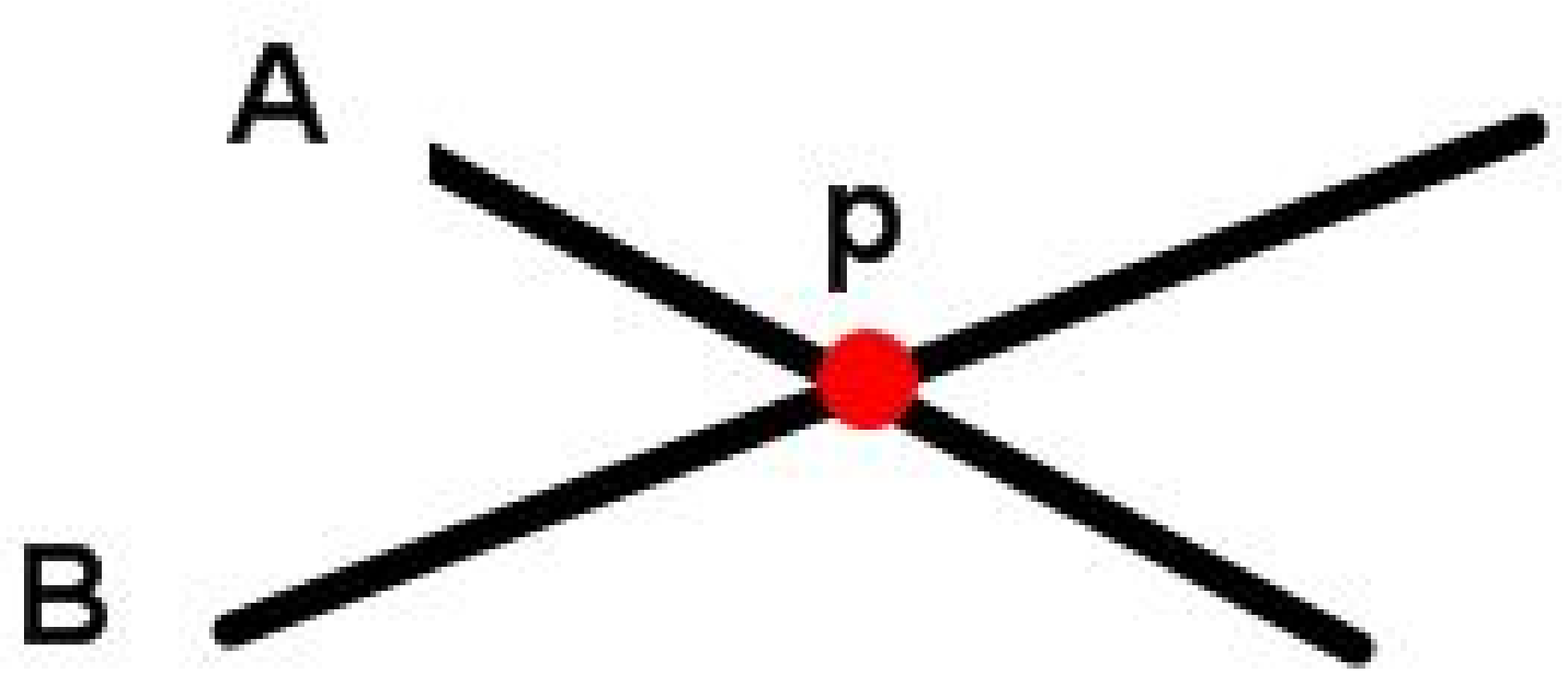}\ (Intersecting lines).  Line $A$ intersects line $B$ at region $p$ such that $A\ \sn\ B$, {\em i.e.}, $A$ and $B$ have a region in common.
\qquad \textcolor{blue}{$\blacksquare$}
\end{example}  

\section{Physical Geometry Axioms}
This section introduces the axioms for a physical geometry.\\

\noindent{\bf Axioms}
\begin{compactenum}[{\bf PG}.1]
\item\label{ax:region} A line $L$ drawn from any region to any region is straight if and only if $L$ has a constant slope.
\item\label{ax:area} Every region has non-zero area.
\item\label{ax:dim} Every region is finite.
\item $\Int(\re A)$ is a subregion of the closed region $\re A$ in physical space if and only $\Int(\re A) \subset \cl A$.
\item $\Int(\re A) = \emptyset$ if and only if $\re A$ contains no subregions.
\item $\re A$ is a closed region if and only if $\re A = \cl(\re A)$.
\item A circle is a closed polygon so that all regions along the boundary are equidistant from a central region. 
\item Regions are congruent, provided the regions have equal areas. 
\item A surface cuts a surface in a line. 
\item A line cuts a line in a region.
\item\label{ax:parallel} Lines $L,L'$ are locally parallel, provided $L$ has a line segment that does not cut $L'$.
\qquad \textcolor{blue}{$\blacksquare$}
\end{compactenum}
$\mbox{}$\\
\vspace{3mm}
Axiom~\ref{ax:region} is a variation of Euclid's Postulate 1.:
\begin{description}
\item[{\bf Postulate 1.}] To draw a straight line from any point to any point~\cite[Book I]{EuclidElements300BC}. \qquad \textcolor{blue}{$\blacksquare$}
\end{description}
Instead of the points at the ends of a straight line in Euclid's Posulate 1, the ends of a physical straight line are regions.  The diameter of the endpoints will equal the width of the line.  A \emph{line} is defined by a pair of path-connected regions $\re A, \re B$ (denoted by $\overline{\re A\re B}$), provided each pair of regions in the path have a common edge. A straight line in descriptive physical geometry has the look and feel of a straight line in Euclidean geometry.  By definition, a straight line is a region of space.  Every subregion of a line has the same area. From Axiom~\ref{ax:area}, a straight line has non-zero area, \emph{i.e.}, a straight line has non-zero length and width. 

Let $\str A, \str B$ be a pair strings.  A simplicial complex can be obtained (\emph{sewn or stitched together}) from $\str A, \str B$ by adding edges to the strings so that new edges intersect.  Examples of sewing operations are given in M.B. Green, J.H. Schwarz and E. Witten~\cite[\S 1.1, p. 3]{GreenScharzWitten2012CUPsuperstrings} and G. Moore and N. Seiberg~\cite[p. 196]{MooreSeiberg1989CMPconformalFields}.

\begin{figure}[!ht]
\centering
\includegraphics[width=30mm]{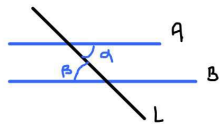}
\caption[]{Local Parallel Lines}
\label{fig:parallelLines}
\end{figure}

\begin{definition}
A pair of lines$A,B$ are \emph{locally parallel} (denoted by $A \| B$), provided the lines can be cut by a straight line
so that the interior angles are right angles.  Let $A,B$ be a pair of straight lines, $L$ a line that cuts $A$ and $B$, $\Int \angle LA, \Int \angle LB$ the interior angles.  Then $A \| B$, provided
\[
L\ \sn\ A, L\ \sn\ B,\ \mbox{and}\ \Int \angle LA = \Int \angle LB = 90^o.\mbox{\qquad \textcolor{blue}{$\blacksquare$}}
\]
\end{definition} 

\begin{figure}[!ht]
\begin{center}
\begin{pspicture}
 (0.0,0.5)(2.5,3.5)
\psframe[linecolor=black](-0.8,0.0)(4.5,3.5)
\pscircle[linecolor=black,linestyle=dotted,dotsep=0.05,fillstyle=solid,fillcolor=yellow](0.58,1.55){1.28}
\psline[linewidth = 2pt,linecolor = blue](0.58,0.55)(0.58,2.55)
\psline[linestyle=dotted,dotsep=0.05,linewidth = 2pt,linecolor = blue](0.58,2.55)(0.58,3.50)
\pscircle[linecolor=black,linestyle=dotted,dotsep=0.05,fillstyle=solid,fillcolor=yellow](3.38,1.85){1.00}
\psline[linewidth = 2pt,linecolor = blue](3.38,1.05)(3.38,2.55)
\psline[linestyle=dotted,dotsep=0.05,linewidth = 2pt,linecolor = blue](3.38,2.55)(3.38,3.50)
\rput(-0.5,3.2){\large  $\boldsymbol{X}$}
\rput(0.88,1.35){\footnotesize $\boldsymbol{A}$}
\rput(0.08,2.55){\large $\boldsymbol{C}$}
\rput(3.68,1.65){\footnotesize $\boldsymbol{B}$}
\rput(2.80,2.55){\large  $\boldsymbol{E}$}
 \end{pspicture}
\end{center}
\caption{\footnotesize Strongly Far Parallel Lines}
\label{fig:stronglyFar2}
\end{figure}
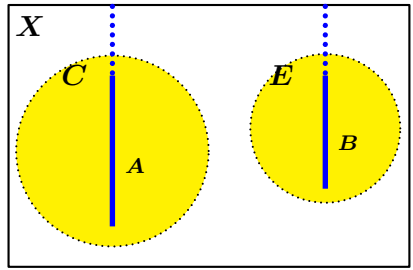
$\mbox{}$\\
\vspace{3mm}

\section{Strongly Far Parallel Lines}

Axiom~\ref{ax:parallel} contrasts with Euclid's Fifth Postulate.\\
\vspace{3mm} 
\begin{description}
\item[{\bf Euclid's Fifth Postulate}] That, if a straight line falling on two straight lines make the interior
angles on the same side less than two right angles, if produced indefinitely, meet on that side on which are the
angles less than two right angles.
\end{description}
$\mbox{}$\\
\vspace{3mm}

From Axiom~\ref{ax:parallel}, a pair of lines are locally parallel, provided $L$ and $L'$ have no line segments that intersect.   Similarly, surfaces $S,S'$ are locally parallel, provided $S$ has no subregions that cut $S'$.  Let $X$ be a set of regions equipped with the relator that contains the Lodator proximity $\near$ and the strong proximity $\sn$.  This gives us the proximity space $\left(X,\left\{\near,\sn\right\}\right)$. The focus here is regions of physical space that are far apart and strongly far apart.

\begin{definition}
We say that $A$ and $B$ are $\delta-$strongly far and we write $\mathop{\not{\delta}}\limits_{\mbox{\tiny$\doublevee$}}$ if and only if $A \not\delta B$ and there exists a subset $C$ of $X$ such that $A \not\delta X \setminus C$ and $C \not\delta B$, that is the Efremovi\v c property holds on $A$ and $B$.
\qquad \textcolor{black}{$\blacksquare$}
\end{definition}

Observe that $A \not \delta B$ does not imply $A \mathop{\not{\delta}}\limits_{\mbox{\tiny$\doublevee$}} B$. In fact, this is the case when the proximity $\delta$ is not an \emph{EF-proximity}.

This gives rise to a view of parallelism that does not depend on the parallel lines being straight and does not depend on the lines being cut by a straight line whose interior angles are both right angles.
$\mbox{}$\\
\vspace{3mm}

\begin{description}
\item[{\bf PG.12 Proximal Parallel Axiom}] If lines $A, B$ are extended indefinitely and $A\ \sfar\ B$, then $A$ and $B$ are parallel.  That is, no part of line $A$ overlaps line $B$.
\qquad \textcolor{black}{$\blacksquare$}
\end{description}
$\mbox{}$\\
\vspace{3mm}

By considering the gap between two sets in a metric space ( $d(A,B) = \inf \{d(a,b): a \in A, b \in B\}$ or $\infty$ if $A$ or $B$ is empty ), Efremovi\v c introduced a stronger proximity called \textit{Efremovi\v c proximity} or \textit{EF-proximity}.  

\begin{definition}
An \emph{EF-proximity} is a relation on $\mathscr{P}(X)$ which satisfies $P0)$ through $P3)$ and in addition 
\[A 
\not\delta B \Rightarrow \exists E \subset X \hbox{ such that } A \not\delta E \hbox{ and } X\setminus E \not\delta B\ \mbox{({\bf EF-property})}.
\]
\end{definition}

\begin{example}
In Fig.~\ref{fig:stronglyFar2}, let $X$ be a nonempty set of regions endowed with the Euclidean metric proximity $\delta_e$, $C,E\subset X$, line $A\subset C$, line $B\subset E$.  Clearly, $A\ \stackrel{\not{\text{\normalsize$\delta$}}_e}{\text{\tiny$\doublevee$}}\ B$ (line $A$ is strongly far from line $B$, since there is no sub-region of line $A$ that overlaps any part of line $B$), since $A \not\delta_e B$ so that  $A \not\delta_e X \setminus C$ and $C \not\delta_e B$.  Also observe that the Efremovi\v{c} property holds on $A$ and $B$.  This also means that $d(A,B) = 0$, {\em i.e.}, the gap between lines $A$ and $B$ is always non-zero.  Hence, from the {\bf PG.12} Proximal Parallel Axiom, $A$ and $B$ are parallel.  \qquad \textcolor{black}{$\blacksquare$}
\end{example}

\begin{lemma}\label{lem:parallelRegions}
Let $\re A,\re B$ be nonempty regions.  If $\re A$ and $\re B$ are expanded indefinitely and $\re A\ \sfar\ \re B$, then $\re A$ and $\re B$ are parallel.
\end{lemma}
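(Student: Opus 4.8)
The plan is to reduce Lemma~\ref{lem:parallelRegions} to the region-level reading of the \textbf{PG.12} Proximal Parallel Axiom by unpacking what the strong-far relation $\sfar$ forces on the pair $\re A,\re B$. First I would observe that, by the contrapositive of Axiom \textbf{(snN2)}, $\re A\ \sfar\ \re B$ yields $\re A\cap \re B=\emptyset$, so $\re A$ and $\re B$ have no subregion in common; strengthening this by the contrapositive of Axiom \textbf{(snN4)}, one also gets $\Int \re A\cap \Int \re B=\emptyset$. Since $\sfar$ additionally carries the Efremovi\v c property (the definition of $\delta$-strongly far), there is a region $C$ with $\re A\ \sfar\ X\setminus C$ and $C\ \sfar\ \re B$, i.e.\ a genuine buffer separating the two regions rather than mere disjointness. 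The upshot of this first step is that no subregion of $\re A$ overlaps (cuts) $\re B$, and symmetrically no subregion of $\re B$ cuts $\re A$.

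Next I would invoke the definition of locally parallel regions recorded just after the locally-parallel-lines definition, namely that regions (surfaces) $S,S'$ are locally parallel provided $S$ has no subregion that cuts $S'$ --- this is precisely the region-level form of \textbf{PG.12}. The hypothesis ``expanded indefinitely and $\re A\ \sfar\ \re B$'' is to be read so that the strong-far relation persists after the expansion (exactly as lines ``extended indefinitely'' retain their $\sfar$ relation in Fig.~\ref{fig:stronglyFar2}); hence the non-overlap conclusion of the first step survives the expansion. Therefore no subregion --- in particular no line segment --- of $\re A$ cuts $\re B$, and by definition $\re A\ \|\ \re B$, which is the assertion of the lemma.

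The main obstacle I anticipate is not the logical chain but pinning down the definitions precisely: one must argue that, for regions, ``$S$ cuts $S'$'' is equivalent to ``$S$ and $S'$ share a subregion'' (via the contiguity relation together with the cutting axioms \textbf{PG.9}--\textbf{PG.10}), so that $\sfar$ genuinely rules out cutting; and one must make the ``expanded indefinitely'' clause rigorous enough that stability of $\sfar$ under expansion is part of the hypothesis rather than a hidden extra assumption. A secondary point worth stating explicitly is that $\re A\ \sfar\ \re B$ is strictly stronger than $\re A\ \not\near\ \re B$: it is the Efremovi\v c buffer $C$, not mere disjointness, that guarantees a non-vanishing gap and hence genuine, non-crossing parallelism --- which is exactly why the failure of a converse for Lodato Axiom \textbf{(P2)} does not obstruct the argument here.
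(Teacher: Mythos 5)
Your proposal is correct in substance but follows a genuinely different route from the paper. The paper's proof partitions $\re A$ and $\re B$ into lines that are open sets, observes that indefinite expansion of the regions expands each constituent line indefinitely, notes that $L_A\ \sfar\ L_B$ is inherited from $\re A\ \sfar\ \re B$ for every pair of regional lines, and then applies the {\bf PG.12} Proximal Parallel Axiom line-by-line before assembling $\re A\ \|\ \re B$ from the parallelism of all the pairs. You instead stay at the region level: you unpack $\sfar$ into disjointness plus the Efremovi\v{c} buffer and invoke the surface-level reading of local parallelism (``$S$ has no subregions that cut $S'$''), which the paper only states in passing rather than as a numbered axiom. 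Your route is arguably more direct and makes explicit why strong farness (rather than mere farness) matters, which the paper's proof leaves implicit; the paper's route has the advantage of resting on {\bf PG.12} exactly as stated, i.e.\ for lines. One citation in your first step is reversed: the contrapositive of {\bf (snN2)} gives $\re A\cap\re B=\emptyset\Rightarrow\lnot(\re A\ \sn\ \re B)$, not the implication you want. The disjointness you need does hold, but it comes from the definition of $\delta$-strongly far ($\re A\ \sfar\ \re B$ entails $\re A\ \not\near\ \re B$) together with the contrapositive of the Lodato axiom {\bf (P2)}; the contrapositive of {\bf (snN4)} then correctly yields $\Int\re A\cap\Int\re B=\emptyset$ as you state. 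With that citation repaired, your argument is sound at the paper's level of rigor.
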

\begin{proof}
Partition regions $\re A$ and $\re B$ into lines that are open sets, {\em i.e.}, each line does not include into border points. Since region is extended indefinitely, each regional line is extended indefinitely.  Let line $L_A\in \re A$, line $L_B\in \re B$.
$L_A\ \sfar\ L_B$, since $\re A\ \sfar\ \re B$.  Consequently, from the {\bf PG.12} Proximal Parallel Axiom, $L_A\ \|\ L_B$.  Since this holds true for all pairs of regional lines, $\re A\ \|\ \re B$.
\end{proof}

\begin{theorem}
Let $\str A,\str B$ be nonempty strings.  If $\str A$ and $\str B$ are expanded indefinitely and $\str A\ \sfar\ \str B$, then $\str A$ and $\str B$ are parallel.
\end{theorem}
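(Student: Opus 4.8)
The plan is to reduce the statement to Lemma~\ref{lem:parallelRegions} by invoking the fact, recorded earlier in the excerpt, that every string is a spatial region (though not conversely). Regarding $\str A$ and $\str B$ as nonempty regions $\re A := \str A$, $\re B := \str B$, the hypotheses ``expanded indefinitely'' and ``$\str A\ \sfar\ \str B$'' are exactly the hypotheses of Lemma~\ref{lem:parallelRegions}, whose conclusion $\re A\ \|\ \re B$ is the desired $\str A\ \|\ \str B$. So at the coarsest level the theorem is a one-line corollary.

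To keep the argument self-contained at the string level, I would instead mirror the proof of Lemma~\ref{lem:parallelRegions}. First, partition each string into a family of pairwise openly-disjoint regional lines, that is, open regions that exclude their boundary regions; this is possible because a string has non-zero width and is covered by its constituent lines, just as a worldsheet is covered by strings. Since $\str A$ and $\str B$ are expanded indefinitely, each constituent regional line inherits unbounded length. Next, for arbitrary sublines $L_A$ of $\str A$ and $L_B$ of $\str B$, observe that $L_A\ \sfar\ L_B$: were $L_A\ \sn\ L_B$, then $\str A\ \sn\ \str B$ would follow by the union/interior strong-proximity axioms (snN3), (snN4), contradicting $\str A\ \sfar\ \str B$. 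Applying the \textbf{PG.12} Proximal Parallel Axiom then yields $L_A\ \|\ L_B$, and since this holds for every such pair of regional lines, $\str A\ \|\ \str B$ by the region-level definition of parallel regions.

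The main obstacle --- really the only delicate point --- is justifying the monotonicity of $\sfar$ under passage to sublines, i.e.\ that $\str A\ \sfar\ \str B$ genuinely entails $L_A\ \sfar\ L_B$ for all constituent lines $L_A\subset\str A$, $L_B\subset\str B$. In the metric/Efremovi\v{c} setting this is transparent, since the gap satisfies $d(L_A,L_B)\ge d(\str A,\str B)>0$ and the EF-property is inherited by subsets; stated purely axiomatically it rests on (snN3)--(snN4) as above. Everything else is bookkeeping: the decomposition of a string into regional lines is routine given non-zero width, and the final assembly is immediate from the definition of parallelism for regions together with Lemma~\ref{lem:parallelRegions}.
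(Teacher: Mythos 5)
Your first paragraph is exactly the paper's proof: the paper simply observes that strings are regions and invokes Lemma~\ref{lem:parallelRegions} in one line. The additional string-level argument you sketch merely re-derives that lemma's proof and is not needed, but it introduces no error.
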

\begin{proof}
Strings $\str A,\str B$ are regions that have been expanded indefinitely and $\str A\ \sfar\ \str B$.  Hence, by Lemma~\ref{lem:parallelRegions}, $\str A\ \|\ \str B$.
\end{proof}

\begin{theorem}
Let $\wsh A,\wsh B$ be nonempty worldsheets.  If $\wsh A$ and $\wsh B$ are expanded indefinitely and $\wsh A\ \sfar\ \wsh B$, then $\wsh A$ and $\wsh B$ are parallel.
\end{theorem}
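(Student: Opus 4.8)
The plan is to reduce this statement for worldsheets to the statement for regions already established in Lemma~\ref{lem:parallelRegions}, using the defining property of a worldsheet: $\wsh M$ is a nonempty region of the ambient space every subregion of which contains at least one string, so a worldsheet is in particular a nonempty region.

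First I would observe that, being nonempty regions, $\wsh A$ and $\wsh B$ may be taken as the regions $\re A,\re B$ in the hypothesis of Lemma~\ref{lem:parallelRegions}. The two hypotheses transfer verbatim: ``$\wsh A$ and $\wsh B$ are expanded indefinitely'' is exactly the hypothesis that the regions are expanded indefinitely, and ``$\wsh A\ \sfar\ \wsh B$'' is exactly the hypothesis that the regions are strongly far. Hence Lemma~\ref{lem:parallelRegions} applies directly and yields $\wsh A\ \|\ \wsh B$.

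For a proof that stays parallel to the preceding theorem on strings, I would instead argue string-by-string. Cover $\wsh A$ and $\wsh B$ by their strings, and pick arbitrary constituent strings $\str P\subseteq \wsh A$ and $\str Q\subseteq \wsh B$. Since $\wsh A$ and $\wsh B$ are expanded indefinitely, so are $\str P$ and $\str Q$; and since $\wsh A\ \sfar\ \wsh B$, monotonicity of $\sfar$ under passage to subregions gives $\str P\ \sfar\ \str Q$. The theorem on parallel strings then gives $\str P\ \|\ \str Q$, and since this holds for every pair of constituent strings we conclude $\wsh A\ \|\ \wsh B$.

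The only step that is not pure bookkeeping is the monotonicity claim used in the second argument: that $\wsh A\ \sfar\ \wsh B$ together with $\str P\subseteq \wsh A$ and $\str Q\subseteq \wsh B$ forces $\str P\ \sfar\ \str Q$. This follows from the definition of $\sfar$ via the Efremovi\v{c} property (a subregion of a region strongly far from $\wsh B$ is again strongly far from $\wsh B$, witnessed by the same separating set $C$), and it is precisely the restriction step used implicitly inside the proof of Lemma~\ref{lem:parallelRegions} when it passes from whole regions to their regional lines. I expect this to be the main --- and only mildly subtle --- obstacle; everything else is immediate from the definitions.
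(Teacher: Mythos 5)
Your first argument is exactly the paper's proof: worldsheets are nonempty regions, so Lemma~\ref{lem:parallelRegions} applies verbatim to give $\wsh A\ \|\ \wsh B$. The string-by-string alternative you sketch is a consistent elaboration in the spirit of the preceding theorem on strings, but the paper itself stops at the direct reduction, so no further comparison is needed.
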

\begin{proof}
Worldsheets $\wsh A,\wsh B$ are regions that have been expanded indefinitely and $\wsh A\ \sfar\ \wsh B$.  Hence, by Lemma~\ref{lem:parallelRegions}, $\wsh A\ \|\ \wsh B$.
\end{proof}

\section{Sewing Regions Together}
This section introduces a sew operation.  The basic idea is to introduce an edge $L$ between a pair of parallel regions $\re A, \re B$ so that $\re A, \re B$ are connected and a simplicial complex is constructed.  That is, $\re A\ \sfar\ \re B$ (the regions do not overlap, even if they are extended indefinitely).  Let $L$ have vertices $p,q$ so that $L = \overline{p,q}$ and let $\re A\ \sn p$ (region $\re A$ and $p$ overlap).  Further, let $\re A\ \sn q$ (region $\re A$ and $q$ overlap).  Then
\[
\re A\ \sn\ L\ \&\ \re B\ \sn\ L,
\]
forming a simplicial complex.  In general, non-overlapping regions are sewn together by joining the regions by one or more edges.  Let $k\in \mathbb{N}$ be a natural number. Let $2^{\mathbb{R}^2}$ be a collection of plane regions. From this, the planar mapping $sew:2^{\mathbb{R}^2}\times2^{\mathbb{R}^2}\times\mathbb{N}\longrightarrow 2^{\mathbb{R}^2}$ is defined by
\[
sew(\re A,\re B,k) = \left\{\re A, \re B\right\}\cup \mathop{\bigcup}\limits_{i=1}^{k}\left\{a\in\re A,b\in\re B,\left\{p,q\right\}\in L: a\ \sn\ p\ \mbox{$\&$}\ b\ \sn\ q\right\}.
\]

By sewing a pair of plane regions together with $sew(\re A,\re B, 1)$, $\re A\ \sn\ p, \re B\ \sn\ q$, we mean an edge is added between a subregion $\re a\in \re A$ and vertex $p$ in line $\overline{p,q}$ and a subregion $\re b\in \re B$ and vertex $q$ in line $\overline{p,q}$.\\
\vspace{3mm}

\begin{figure}[!ht]
\centering
\includegraphics[width=50mm]{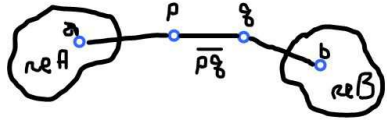}
\caption[]{Sewing Disconnected Regions Together}
\label{fig:sewingRegions}
\end{figure}

\begin{example}{\bf Stitching Regions Together}.\\
Let regions $\re A, \re B$ contain subregions $\re\ a, \re\ b$, respectively, as shown in Fig.~\ref{fig:sewingRegions}. Assume that $\re A, \re B$ are disconnected, {\em i.e.}, there is no path between $\re A, \re B$. The $sew(\re A,\re B, k=1)$ operation transforms $\re A, \re B$ into connected regions by introducing at least one edge $\overline{p,q}$ so that vertex $p$ is connected to a subregion $\re\ a$ and vertex $q$ is connected to a subregion $\re\ b$ as shown in Fig.~\ref{fig:sewingRegions}.
\qquad \textcolor{black}{$\blacksquare$}
\end{example}

\begin{lemma}\label{lem:sewingRegions}
$sew(\re A,\re B)$ constructs a simplicial complex.
\end{lemma}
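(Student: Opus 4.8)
The plan is to unwind the definition of the $sew$ operation and check that the resulting collection of regions satisfies the two defining conditions of a simplicial complex given earlier in the excerpt: the boundedness condition (if $\re A \in K$ and $\re B \in \re A$, then $\re B \in K$) and the intersection condition (if $\re A,\re B \in K$, then $\re A \cap \re B$ is empty or a common face). By the preceding discussion, $sew(\re A,\re B,1)$ takes two disconnected non-overlapping regions $\re A\ \sfar\ \re B$ and adjoins an edge $L = \overline{p,q}$ together with its vertices $p,q$, arranged so that $\re A\ \sn\ p$ and $\re B\ \sn\ q$. First I would list the members of the constructed set $K = sew(\re A,\re B,1)$: the regions $\re A$ and $\re B$, the $1$-simplex $L$, and the $0$-simplices $p,q$ which are the faces of $L$.

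Next I would verify the boundedness (closure-under-faces) condition. The only new simplex with proper faces is the edge $L$, whose faces are exactly the vertex regions $p$ and $q$; since $sew$ explicitly includes $\{p,q\}\in L$ in its output, every face of every member of $K$ is again in $K$. For $\re A$ and $\re B$ themselves, their subregions are already understood to be collected within them (a region is a collection of subregions, per the region-based setup), so no violation arises there. Then I would check the intersection condition: $\re A\ \sfar\ \re B$ gives $\re A \cap \re B = \emptyset$; the edge $L$ meets $\re A$ only in the vertex $p$ (via $\re A\ \sn\ p$, which forces a shared subregion at $p$ and nothing more, since the edge is otherwise external), so $\re A \cap L = \{p\}$, a common face of both; symmetrically $\re B \cap L = \{q\}$; and $p \cap q = \emptyset$ since $p \ne q$. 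In every case the intersection is empty or a single common vertex, hence a face of both simplices.

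Finally I would note that $K$ is a finite collection (two regions, one edge, two vertices), so it qualifies as a \emph{finite} simplicial complex as required by the definition. I would also invoke Lemma~\ref{lem:parallelRegions} (or the surrounding remarks) to justify that the hypothesis $\re A\ \sfar\ \re B$ is exactly the situation the $sew$ operation is designed for, so that the appeal to $\re A \cap \re B = \emptyset$ is legitimate.

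The main obstacle I anticipate is not any deep argument but rather pinning down, from the paper's rather informal notion of ``region'' and ``overlap,'' the precise claim that $\re A\ \sn\ p$ yields $\re A \cap L = \{p\}$ exactly — i.e., that introducing the connecting edge does not accidentally create extra overlap between $L$ and the interior of $\re A$ (or $\re B$). This needs the tacit convention that the sewing edge is drawn ``externally,'' touching each region only at the prescribed vertex; I would state this as the reading of the construction and then the simplicial-complex axioms follow mechanically. A secondary point to handle carefully is the degenerate possibility that $p$ or $q$ coincides with a pre-existing vertex of $\re A$ or $\re B$, in which case one argues the face condition is still met because the shared object is then a genuine common face.
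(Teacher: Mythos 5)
Your proof is correct and follows essentially the same route as the paper's: unwind the definition of $sew$ and conclude that $\re A\cup\overline{p,q}\cup\re B$ is a simplicial complex — except that you actually verify the face-closure and intersection conditions from the definition, whereas the paper's proof simply asserts the conclusion after noting $\re A\ \sn\ p$ and $\re B\ \sn\ q$. The one point the paper touches that you treat only implicitly is the case of $k>1$ connecting edges (where one should also check that distinct sewing edges meet, if at all, only in common vertices), and the worry you flag about whether $\re A\cap L=\{p\}$ exactly is a genuine informality in the construction that the paper's own proof does not address either.
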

\begin{proof}
$sew(\re A,\re B,k)$ is defined by a set of edges $\left\{\overline{p,q}\right\}$ such that each the vertices of edge are connected to subregions in $\re A,\re B$.  Without loss of generality, let vertex $p$ in edge $\overline{p,q}$ be connected to subregion $\re\ a\in \re A$ and let vertex $q$ in edge $\overline{p,q}$ be connected to subregion $\re\ b\in \re B$.  By definition, $\re A\ \sn\ p\ \&\ \re A\ \sn\ q$.  Hence, $\re A\cup\overline{p,q}\cup\re B$ is a simplicial complex.  If regions $\re A,\re B$ are connected to $k>0$ edges as a result of $sew(\re A,\re B,k)$, then $sew(\re A,\re B,k)$ constructs a simplicial complex with multiple connecting edges.  This gives the desired result. 
\end{proof}

\begin{lemma}\label{lem:sewingOp}
Let $\re A, \re B$ be simplicial complexes.  If $sew(\re A,\re B,k)$, then the resulting region is a simplicial complex.
\end{lemma}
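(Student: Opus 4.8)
The plan is to verify directly that the region $K = sew(\re A,\re B,k)$ satisfies the two defining conditions of a simplicial complex recalled in the introduction: (i) the \emph{boundedness condition}, that every face of a simplex of $K$ again lies in $K$, and (ii) the \emph{intersection condition}, that for any two simplices $\sigma,\tau\in K$ either $\sigma\cap\tau=\emptyset$ or $\sigma\cap\tau$ is a common face of $\sigma$ and $\tau$. Since $\re A$ and $\re B$ are simplicial complexes by hypothesis, both conditions already hold internally to each of them, and by Lemma~\ref{lem:sewingRegions} the operation $sew$ produces a simplicial complex when $\re A,\re B$ are viewed merely as regions; so the real work is to check that the richer simplicial structure carried by $\re A$ and $\re B$ is not disturbed by the $k$ edges introduced by the sew operation.

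First I would classify the simplices of $K$: each is either a simplex of $\re A$, a simplex of $\re B$, one of the $k$ new edges $L=\overline{p,q}$, or one of the vertices $\{p\},\{q\}$ of such an edge. For the boundedness condition, the faces of a simplex of $\re A$ lie in $\re A\subseteq K$ and likewise for $\re B$; the faces of a new edge $\overline{p,q}$ are $\{p\}$ and $\{q\}$, which lie in $K$ by construction, and the vertices $\{p\},\{q\}$ are $0$-simplices with no proper faces. Here I would invoke axiom (snN2): from $\re A\ \sn\ p$ we get $\re A\cap\{p\}\neq\emptyset$, which lets us identify $p$ with the vertex region of $\re A$ it overlaps, and symmetrically identify $q$ with a vertex region of $\re B$; thus $\{p\}$ is a $0$-simplex already present in $\re A$ and $\{q\}$ one already present in $\re B$, and the boundedness condition holds.

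Next I would check the intersection condition by cases on the pair $\sigma,\tau\in K$. If both lie in $\re A$, or both lie in $\re B$, the condition holds because $\re A$ and $\re B$ are simplicial complexes. If $\sigma$ is a simplex of $\re A$ and $\tau$ a simplex of $\re B$, then $\sigma\cap\tau=\emptyset$ since $\re A$ and $\re B$ are disconnected and $\re A\ \sfar\ \re B$, so the condition is satisfied vacuously. If $\sigma$ is a simplex of $\re A$ and $\tau=\overline{p,q}$ is a new edge, then $\sigma\cap\tau$ is either $\emptyset$ or the shared vertex $\{p\}$, which is a face of both; the mirror argument covers $\tau$ meeting a simplex of $\re B$ in $\{q\}$. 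Finally, two distinct new edges meet either in $\emptyset$ or in a common endpoint (when both are attached at the same vertex region), again a common face. Since $k$ is a finite natural number and $\re A,\re B$ are finite collections of simplices, $K$ is finite, every simplicial-complex axiom holds, and therefore $K=sew(\re A,\re B,k)$ is a simplicial complex.

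The main obstacle I anticipate is the step identifying the attaching vertices $p,q$ of each new edge with genuine vertices of $\re A$ and $\re B$: the definition of $sew$ only requires $a\ \sn\ p$ for some subregion $a\in\re A$, i.e.\ that $p$ \emph{overlaps} a subregion of $\re A$, and one must still exclude the possibility that $\overline{p,q}$ pierces the interior of a higher-dimensional simplex of $\re A$, which would make $\overline{p,q}\cap\sigma$ fail to be a face. I would deal with this by appealing to the setup of the construction — the regions being sewn are parallel and non-overlapping in the sense of Lemma~\ref{lem:parallelRegions}, so the connecting edges are added along boundary subregions — and by using axioms (snN2) and (snN6) to pin $p$ and $q$ down to boundary vertex regions; once that identification is secured, the remaining cases above are routine.
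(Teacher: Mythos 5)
Your proof is correct and substantially more thorough than the paper's. The paper disposes of this lemma in two sentences: it cites Lemma~\ref{lem:sewingRegions} to assert that $sew(\re A,\re B,k)$ is a simplicial complex, restates that each new edge $\overline{p,q}$ has $p$ attached to a subregion of $\re A$ and $q$ to a subregion of $\re B$, and declares the result --- the stronger hypothesis that $\re A,\re B$ are themselves simplicial complexes is never actually used beyond that citation. You, by contrast, do the work the statement really calls for: you classify the simplices of the sewn object, verify the boundedness condition and the intersection condition case by case, and --- most valuably --- isolate the one step that neither Lemma~\ref{lem:sewingRegions} nor the definition of $sew$ settles, namely that the attaching vertices $p,q$ must be identified with genuine vertices of $\re A$ and $\re B$ and that a new edge must not pierce the interior of a higher-dimensional simplex of either complex. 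Your resolution of that point (via (snN2), (snN6) and the parallel, non-overlapping setup of Lemma~\ref{lem:parallelRegions}) is somewhat informal, but it is an honest acknowledgment of exactly where the argument needs care, whereas the paper's proof passes over it silently. What the paper's approach buys is brevity and consistency with its region-based, proximity-first style; what yours buys is an actual verification of the simplicial-complex axioms and a correct use of the hypothesis that $\re A$ and $\re B$ are complexes rather than arbitrary regions.
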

\begin{proof}
From Lemma~\ref{lem:sewingRegions}, $sew(\re A,\re B,k)$ is a simplicial complex.  Let $sew(\re A,\re B,k)$ contain a line $\overline{p,q}$ so that $\re A\ \sn\ p\ \&\ \re A\ \sn\ q$, {\em i.e.}, vertex $p$ is connected to a subregion $\re\ a\in \re A$ and vertex $q$ is connected to a subregion $\re\ b\in \re B$.  This gives the desired result. 
\end{proof}

Let regions $\re A, \re B$ have vertices $p,q$, respectively. The application of $sew(\re A,\re B)$ on parallel regions $\re A, \re B$ results in a closed polygon. 

\begin{figure}[!ht]
\centering
\subfigure[Disconnected regions]
 {\label{fig:reP}\includegraphics[width=25mm]{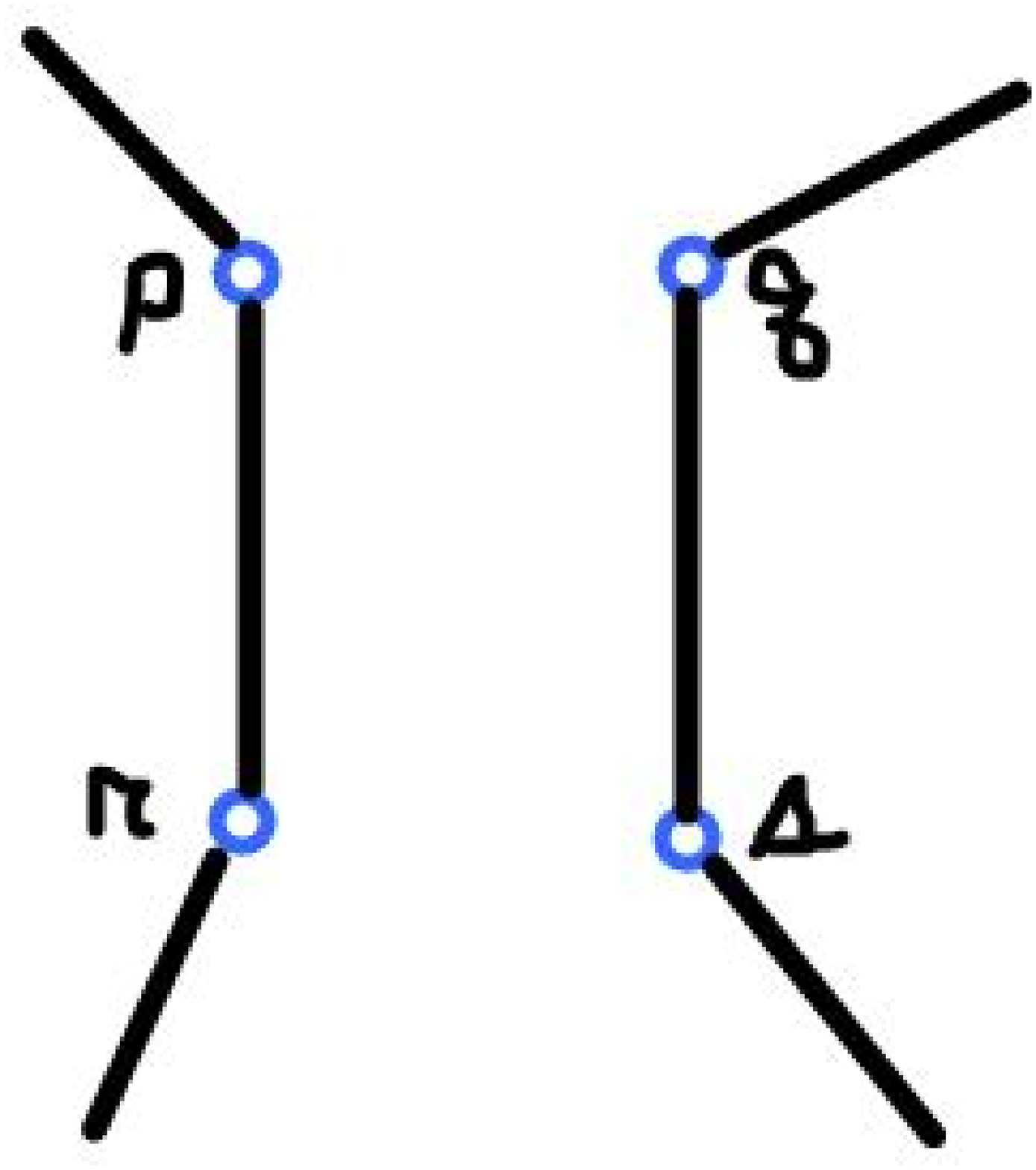}}\hfil
\subfigure[Connected subregions $\re\ p,\re\ q$]
 {\label{fig:rePQ}\includegraphics[width=25mm]{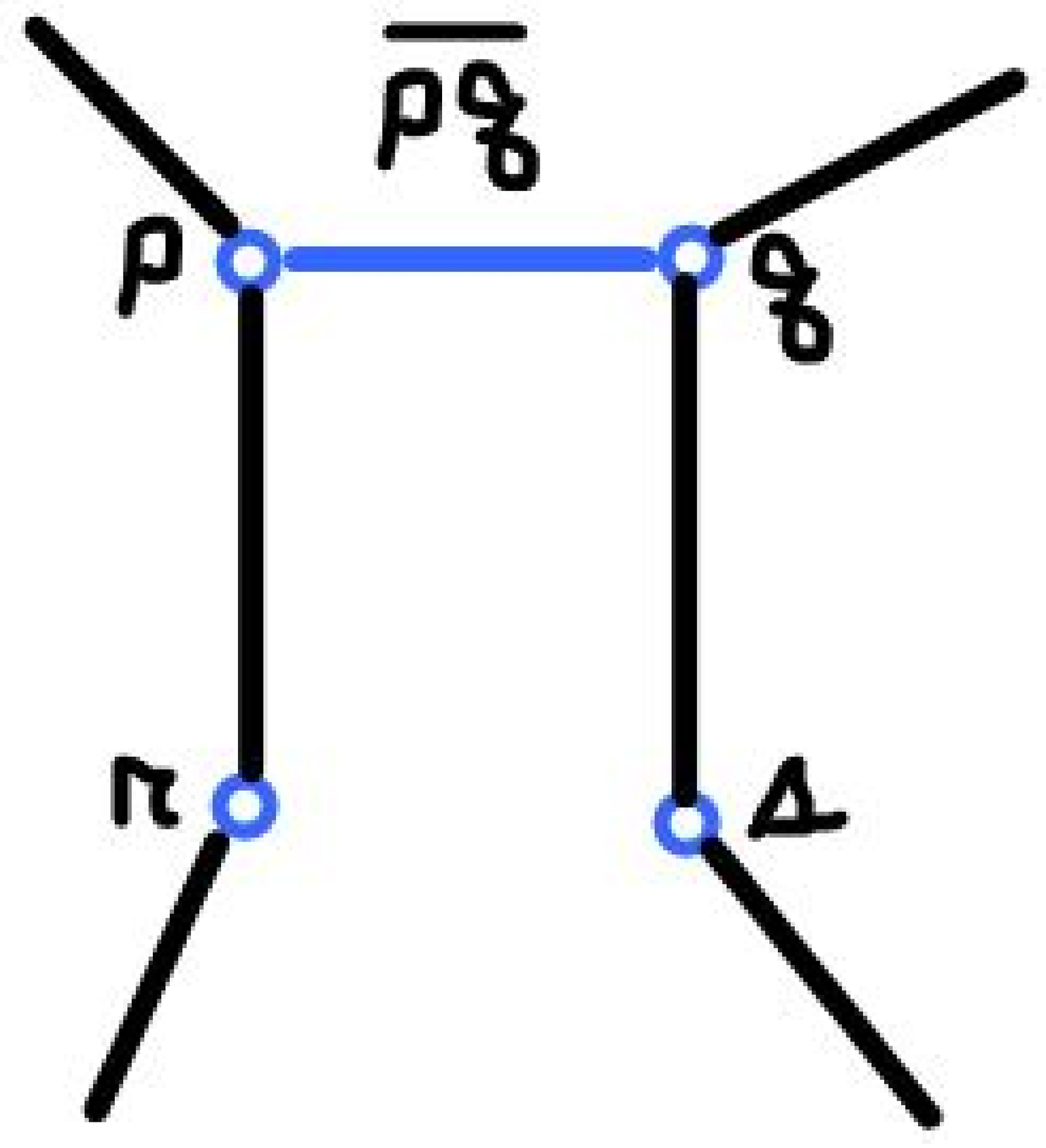}}\hfil
\subfigure[Connected subregions $\re\ r,\re\ s$]
 {\label{fig:reRS}\includegraphics[width=25mm]{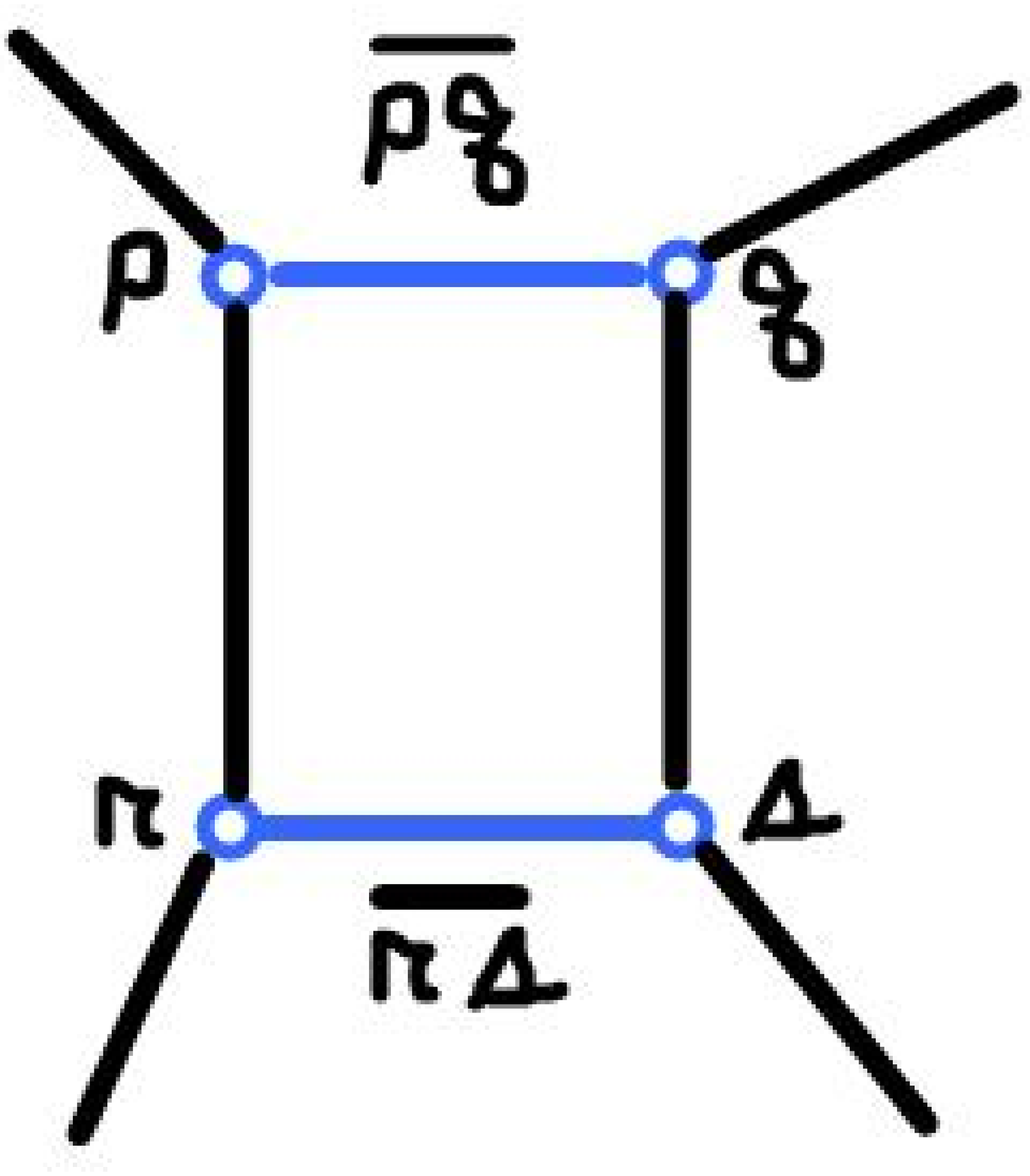}}
\caption[]{Constructing a polygon using sewing operation}
\label{fig:sewingOperation}
\end{figure}
$\mbox{}$\\
\vspace{2mm} 

\begin{example}{\bf Constructing a Polygonal Region Via Sewing}.\\
Let disconnected regions $\re A, \re B$ contain collections of subregions $\left\{\re\ p, \re\ r\right\}, \left\{\re\ q, \re\ s\right\}$, respectively, as shown in Fig.~\ref{fig:reP}.  Let $\re A = \left\{\re\ p, \re\ r\right\}$ and let $\re B = \left\{\re\ q, \re\ s\right\}$. The $sew(\re A,\re B)$ operation transforms $\re A, \re B$ into connected regions by introducing edge $\overline{p,q}$ as shown in Fig.~\ref{fig:rePQ} and edge $\overline{r,s}$ as shown in Fig.~\ref{fig:reRS}.  In other words,
\begin{align*}
sew(\re A,\re B,k) &= \left\{\left(\overline{p,q},\re A,\re B\right):\overline{p,q}\ \sn\ \re A\ \&\ \overline{p,q}\ \sn\ \re B\right\}\bigcup\\
                 &= \left\{\left(\overline{r,s},\re A,\re B\right):\overline{r,s}\ \sn\ \re A\ \&\ \overline{r,s}\ \sn\ \re B\right\}
\end{align*}
Assume that we have $\overline{p,r}\ \|\ \overline{q,s}$ (parallel edges) and assume that we also have $\overline{p,q}\ \|\ \overline{r,s}$ (parallel edges connected between regions $\re A, \re B$).  In this instance, sewing results in a rectangular shape.
\qquad \textcolor{black}{$\blacksquare$}
\end{example}

\begin{theorem}
Let $\re A, \re B$ be parallel simplicial complexes.  If $sew(\re A,\re B,k)$, then the resulting region is a simplicial complex.
\end{theorem}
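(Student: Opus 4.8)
The plan is to reduce the statement to Lemma~\ref{lem:sewingOp}, whose only hypothesis is that $\re A$ and $\re B$ are simplicial complexes. Since a pair of \emph{parallel} simplicial complexes is in particular a pair of simplicial complexes, most of the work is already done; what remains is to observe that the extra hypothesis $\re A\ \|\ \re B$ is compatible with — and in fact licenses — the sewing operation, and to note what the parallelism buys us about the \emph{shape} of the output.

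First I would record that parallelism forces $\re A\ \sfar\ \re B$: by the {\bf PG.12} Proximal Parallel Axiom together with Lemma~\ref{lem:parallelRegions}, parallel regions have no overlapping subregions, hence are disconnected before sewing. This guarantees that each edge $\overline{p,q}$ introduced by $sew(\re A,\re B,k)$ is a genuinely new $1$-simplex: its vertex $p$ meets $\re A$ in a single subregion $\re\ a\in\re A$ (so $\re A\ \sn\ p$) and its vertex $q$ meets $\re B$ in a single subregion $\re\ b\in\re B$ (so $\re B\ \sn\ q$), so the intersection of $\overline{p,q}$ with $\re A$ (resp. $\re B$) is a common face, exactly as demanded by the simplicial-complex conditions recalled in the Preliminaries.

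Next I would invoke Lemma~\ref{lem:sewingOp} directly: since $\re A$ and $\re B$ are simplicial complexes and $\re A\ \sfar\ \re B$, the region $sew(\re A,\re B,k)$ is a simplicial complex, with the $k$ connecting edges $\overline{p,q}$ supplying the additional $1$-simplices. I would then append the refinement already flagged before the theorem: when the connecting edges are themselves pairwise parallel (e.g. $\overline{p,r}\ \|\ \overline{q,s}$ and $\overline{p,q}\ \|\ \overline{r,s}$), the sewn region closes up into a polygon, so the resulting simplicial complex is moreover a polygonal region.

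The main obstacle I anticipate is not any deep point but the bookkeeping of the face-intersection and boundedness conditions: one must check that adjoining the edges $\overline{p,q}$ creates no illegal intersection (two simplices meeting in something other than a common face) and that every face of every newly added simplex already lies in the collection. Because $\re A$ and $\re B$ are parallel and hence strongly far apart, each new edge is ``dangling'' onto single subregions, so these conditions hold automatically — but this is precisely the step where the parallel hypothesis is actually used, and it is the one I would spell out with care.
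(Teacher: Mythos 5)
Your proposal is correct and matches the paper's own argument, which is simply ``Immediate from Lemma~\ref{lem:sewingOp}''; your core step is exactly that invocation, since the parallel hypothesis only strengthens the assumptions of that lemma. The additional remarks on strong farness and the face conditions are harmless elaboration but not required by the paper's one-line proof.
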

\begin{proof}
Immediate from Lemma~\ref{lem:sewingOp}.
\end{proof}

\begin{theorem}
Let $\re A, \re B$ be parallel simplicial complexes.  If $sew(\re A,\re B,k)$ connects with parallel edges $\overline{p,q},\overline{r,s}$, then the simplicial complex $sew(\re A,\re B,k)$ is a rectangle.
\end{theorem}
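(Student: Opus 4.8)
The plan is to combine the preceding theorem with the Definition of locally parallel lines and the fact, noted in the Introduction, that in the physical plane $\mathbb{R}^2$ the geometry ``has the look and feel of Euclidean geometry.'' First I would invoke the preceding theorem (equivalently Lemma~\ref{lem:sewingOp}) to record that $sew(\re A,\re B,k)$ is a simplicial complex, and then observe, exactly as in the Example on constructing a polygonal region via sewing, that in the present situation its boundary is a closed polygon assembled from four edges: a boundary edge $\overline{p,r}$ of $\re A$, a boundary edge $\overline{q,s}$ of $\re B$, and the two sewing edges $\overline{p,q}$ and $\overline{r,s}$ produced by $sew(\re A,\re B,k)$, with $\overline{p,q}\ \sn\ \re A$, $\overline{p,q}\ \sn\ \re B$ and $\overline{r,s}\ \sn\ \re A$, $\overline{r,s}\ \sn\ \re B$.

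Next I would identify the two pairs of parallel opposite sides. Since $\re A$ and $\re B$ are parallel simplicial complexes, their facing boundary edges are locally parallel, so $\overline{p,r}\ \|\ \overline{q,s}$; and by hypothesis the two sewing edges are parallel, so $\overline{p,q}\ \|\ \overline{r,s}$. Thus the boundary of $sew(\re A,\re B,k)$ is a quadrilateral with vertices $p,q,s,r$ both of whose pairs of opposite sides are locally parallel, i.e.\ a parallelogram in the physical plane.

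The heart of the argument is then to upgrade this parallelogram to a rectangle. I would apply the Definition of $\|$ to the pair $\overline{p,r}\ \|\ \overline{q,s}$: these lines can be cut by a straight line whose interior angles are right angles. The sewing edge $\overline{p,q}$ is a straight line that strongly contacts $\overline{p,r}$ at the vertex $p$ and strongly contacts $\overline{q,s}$ at the vertex $q$; reading the Definition off this transversal gives $\Int\angle(\overline{p,q},\overline{p,r}) = \Int\angle(\overline{p,q},\overline{q,s}) = 90^{o}$. Using $\overline{r,s}$ as the transversal of the same parallel pair yields, in the same way, right interior angles at $r$ and at $s$. Hence all four interior angles of the quadrilateral are right angles, and since its opposite sides are locally parallel it is a rectangle; together with the earlier remark that sewing parallel regions produces a closed polygon, this shows $sew(\re A,\re B,k)$ is a rectangle.

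I expect the main obstacle to be precisely this last upgrade. The Definition of locally parallel lines is existential (the lines \emph{can} be cut by a straight line making right interior angles), so one must argue that the specific transversals arising from the sewing operation are among the lines witnessing that definition. This is where one leans on the ``look and feel of Euclidean geometry'' in the physical/feature plane, so that a transversal of two locally parallel lines makes equal corresponding angles and a single right angle in a parallelogram forces all four; discharging this step by something sharper than the Euclidean picture is the delicate part of the proof.
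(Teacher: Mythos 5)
The first thing to note is that the paper offers no proof of this theorem at all: it is stated immediately after the example on constructing a polygonal region via sewing, and that example simply asserts that ``sewing results in a rectangular shape'' when the two pairs of edges are parallel. So there is no argument of the author's to compare yours against; your attempt is, in effect, the only argument on the table, and it goes further than the paper does.

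That said, your own diagnosis of the weak point is correct, and it is a genuine gap rather than a cosmetic one. Up to the point where you exhibit the boundary of $sew(\re A,\re B,k)$ as a quadrilateral on $p,q,s,r$ with $\overline{p,r}\ \|\ \overline{q,s}$ and $\overline{p,q}\ \|\ \overline{r,s}$, the argument is sound and is supported by Lemma~\ref{lem:sewingOp} and the preceding example; this yields a parallelogram. But the passage from parallelogram to rectangle does not follow from the paper's definition of $\|$. That definition is existential: $A\ \|\ B$ provided the lines \emph{can} be cut by some straight line making both interior angles equal to $90^{o}$. Nothing in the hypotheses forces the particular transversals $\overline{p,q}$ and $\overline{r,s}$ produced by the sewing operation to be witnesses of that definition; a non-rectangular parallelogram in the Euclidean plane has both pairs of opposite sides locally parallel in exactly this sense, yet none of its sides is a perpendicular transversal of the opposite pair. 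Appealing to the ``look and feel of Euclidean geometry'' cannot close this step, because in Euclidean geometry the implication from parallelogram to rectangle is false. To make the theorem true one needs an additional hypothesis --- for instance that the sewing edges meet the boundary edges of $\re A$ and $\re B$ at right angles, or that $\overline{p,q}$ and $\overline{r,s}$ realize the gap between the parallel regions --- and with such a hypothesis the conclusion is immediate. As stated, both your proof and the paper's unproved assertion establish only that $sew(\re A,\re B,k)$ bounds a parallelogram.
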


The results obtain so far for sewing regions together extend to strings and worldsheets.  By definition, a string is simplicial complex.  Then

\begin{corollary}\label{lem:sewingOp2}
Let $\str A, \str B$ be strings.  If $sew(\str A,\str B,k)$, then the resulting region is a simplicial complex.
\end{corollary}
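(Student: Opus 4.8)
The plan is to reduce the claim to Lemma~\ref{lem:sewingOp} by exploiting the remark made just before the corollary, namely that every string is, by definition, a simplicial complex. So the first step is to record that observation explicitly: since $\str A$ and $\str B$ are strings, each of $\str A,\str B$ is a simplicial complex in the sense used in Lemma~\ref{lem:sewingOp} (a finite collection of simplices satisfying the boundedness condition and the face-intersection condition).

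Next I would unwind the definition of $sew(\str A,\str B,k)$ exactly as in the proof of Lemma~\ref{lem:sewingRegions}: the operation adjoins to $\str A\cup\str B$ a family of at most $k$ edges $\overline{p,q}$, each edge having vertex $p$ connected (via $\sn$) to a subregion $\re\ a\in\str A$ and vertex $q$ connected to a subregion $\re\ b\in\str B$. Thus the resulting region is $\str A\cup\left(\bigcup_{i=1}^{k}\overline{p_i,q_i}\right)\cup\str B$, i.e. two simplicial complexes together with finitely many connecting $1$-simplices whose endpoints lie in those complexes.

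Then I would simply invoke Lemma~\ref{lem:sewingOp} with $\re A\assign\str A$ and $\re B\assign\str B$: since both are simplicial complexes, $sew(\str A,\str B,k)$ is a simplicial complex, and the adjoined edges $\overline{p_i,q_i}$ together with their vertices satisfy the boundedness and face conditions because their vertices are identified with subregions already present in $\str A$ and $\str B$. This yields the desired result.

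I do not expect any genuine obstacle here; the corollary is a direct specialization of Lemma~\ref{lem:sewingOp}. The only point that needs a sentence of care is making sure the ``string $=$ simplicial complex'' identification is stated as the hypothesis feeding Lemma~\ref{lem:sewingOp}, so that nothing about the intrinsic geometry of strings (non-zero width, bounded or unbounded length) is actually used — only their combinatorial description as simplicial complexes. A one-line proof of the form ``Immediate from Lemma~\ref{lem:sewingOp}, since every string is a simplicial complex'' is therefore adequate, but I would include the two intermediate sentences above for the reader's convenience.
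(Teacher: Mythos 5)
Your proposal is correct and matches the paper's own route: the paper justifies this corollary solely by the remark immediately preceding it ("By definition, a string is a simplicial complex"), so the result is read off as a direct specialization of Lemma~\ref{lem:sewingOp}, exactly as you do. Your two intermediate sentences only expand what the paper leaves implicit; no new idea is introduced and none is needed.
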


It is also the case that a worldsheet is a region of physical space with a covering of strings so that every subregion has nonempty intersection with the covering strings.  Hence, a worldsheet is a collection of simplexes.   

\begin{corollary}\label{lem:sewingOp3}
Let $\wsh A, \wsh B$ be strings.  If $sew(\wsh A,\wsh B,k)$, then the resulting region is a simplicial complex.
\end{corollary}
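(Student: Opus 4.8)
The plan is to reduce the statement to Lemma~\ref{lem:sewingOp} by first observing that each worldsheet is itself a simplicial complex. Recall that $\wsh A$ is completely covered by strings and, by definition, every string is a simplicial complex; hence $\wsh A$ is a union of simplicial complexes, i.e., a collection of simplexes. First I would verify that this collection satisfies the boundedness condition and the face-intersection condition required of a simplicial complex: since the covering strings of a worldsheet share common subregions (a worldsheet is a connected region), any two simplexes occurring in the cover are either disjoint or meet in a set that is a face of both, so $\wsh A$ (and likewise $\wsh B$) qualifies as a simplicial complex in the sense used earlier in the paper.

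Next I would apply the sew operation. Because $\wsh A$ and $\wsh B$ are now simplicial complexes, Lemma~\ref{lem:sewingOp} applies verbatim: $sew(\wsh A,\wsh B,k)$ adjoins $k$ edges $\overline{p,q}$ with $\wsh A\ \sn\ p$ and $\wsh B\ \sn\ q$, and the resulting region $\wsh A\cup\left(\bigcup_{i=1}^{k}\overline{p_i,q_i}\right)\cup\wsh B$ is a simplicial complex. An alternative route is to work string-by-string: partition the string covers of $\wsh A$ and $\wsh B$, sew corresponding strings using Corollary~\ref{lem:sewingOp2}, and take the union; each piece is a simplicial complex by that corollary, and the union inherits the defining properties exactly as in the proof of Lemma~\ref{lem:sewingRegions}.

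The main obstacle I anticipate is the bookkeeping needed to confirm that the union of the simplicial complexes arising from a worldsheet's string cover, together with the newly introduced sewing edges, is again a simplicial complex — in particular, checking that no pair of simplexes meets in a set that fails to be a common face. This is handled by noting that each sewing edge $\overline{p,q}$ meets $\wsh A$ only in the single vertex $p$ (a $0$-face) and meets $\wsh B$ only in the single vertex $q$, so the face-intersection condition is preserved, while the boundedness condition is immediate because every sub-edge and every vertex of $\overline{p,q}$ already belongs to the collection. With these checks in place the conclusion follows precisely as in the proofs of Lemmas~\ref{lem:sewingRegions} and~\ref{lem:sewingOp}, giving the desired result.
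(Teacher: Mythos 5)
Your proposal is correct and follows essentially the same route as the paper: the paper justifies this corollary by the immediately preceding observation that a worldsheet, being covered by strings (each a simplicial complex), is itself a collection of simplexes, and then invokes Lemma~\ref{lem:sewingOp}. Your additional verification of the boundedness and face-intersection conditions is a welcome elaboration of details the paper leaves implicit, but it does not change the argument.
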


\section{Proximal Spacetime Physical Geometry}
Axiom~\ref{ax:dim} can be extended to 4-dimensional space, provided the physical geometry is defined in the spacetime of Physics.  In this section, we assume that geometrical structures are in a spacetime framework.  This means that proximal regions of physical space have relativistic mass.

Recall that a \emph{polytope} is a point set $P\subseteq R^d$ which can be presented either as a convex hull of a finite set of points or as an intersection of finitely many closed half spaces in some $R^d$~\cite{Ziegler2007polytopes}.

A region is \emph{convex}, provided a straight line can be drawn between every pair of regions so that all subregions of the line are also in the region. A \emph{hole} is a closed region $\re A$ with empty interior, {\em i.e.}, $\Int(\re A) = \emptyset$.

\begin{example}{\bf Concentric Closed Cubes with a Hole}.\\
Let $\re A, \re B$ in Fig.~\ref{fig:concentricRegions} represent concentric closed cubes.  $\re A, \re B$ are contiguous, since $\re B\subset \re A$.  The cubical interior of $\Int(\re B)$ is an example of a hole.  This hole is cube-shaped, since this hole is bounded by the surface of a cube.  From Axiom PG.~\ref{ax:area}, the hole $\Int(\re B)$ has non-zero area, {\em i.e.}, the area of $\Int(\re B)$ equals the surface area of its bounding box. \qquad \textcolor{blue}{$\blacksquare$}
\end{example}

\begin{proposition}
Let $\re A$ be a region.  Then
\begin{compactenum}[1$^o$]
\item Every line between regions is a polytope.
\item Adjacent polygons have a polytope in common.
\item A string is a polytope.
\item If a line joins the centers of mass of concentric polyhedra, then the polyhedra are path-connected.
\item A convex region contains no holes.
\item A straight line is a convex region.
\end{compactenum}
\end{proposition}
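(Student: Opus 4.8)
The plan is to establish each of the six assertions by unwinding the pertinent definitions — polytope (a convex hull of a finite set of points, equivalently a finite intersection of closed half-spaces), convex region, hole, line, string, path-connected — and appealing to the physical geometry axioms PG.\ref{ax:region}--PG.12, the simplicial-complex conditions, and Lemmas~\ref{lem:sewingRegions}--\ref{lem:sewingOp}. Items $1^o$, $3^o$ and $6^o$ reduce to exhibiting the region in question as a convex hull of finitely many vertex regions; items $2^o$ and $4^o$ concern common faces and connecting paths; and item $5^o$ relates convexity to non-emptiness of the interior.

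For $6^o$, let $L$ be a straight line and $x,y\in L$ subregions. By Axiom PG.\ref{ax:region}, $L$ has constant slope, so the straight line drawn from $x$ to $y$ is exactly the sub-segment of $L$ joining them, and every subregion of that sub-segment again lies on $L$; hence the defining condition for a convex region is satisfied and $L$ is convex. For $1^o$, a line between regions $\re A,\re B$ is by definition a path $\overline{\re A\re B}$, i.e. a finite sequence of pairwise $\sn$-connected regions joined along common edges, which is a finite $1$-dimensional simplicial complex; its vertex regions $p_1,\dots,p_k$ form a finite set, and presenting the line as $\cv\{p_1,\dots,p_k\}$ (in the straight case simply $\cv\{p_1,p_k\}$) exhibits it as a polytope. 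For $3^o$, a string is, by the definition recalled in the excerpt, a simplicial complex, and it is a region of physical space; being a finite simplicial complex in some $\mathbb{R}^d$, it admits the convex-hull-of-finitely-many-points presentation, so the argument of $1^o$ applies verbatim.

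For $2^o$, if $\re A,\re B$ are adjacent polygons in a simplicial complex $K$, then adjacency gives $\re A\ \sn\ \re B$, hence $\re A\cap\re B\neq\emptyset$, and the simplicial-complex intersection condition forces $\re A\cap\re B$ to be a common face of both; a face of a simplex is a simplex, hence a polytope. For $4^o$, let $\re A,\re B$ be concentric polyhedra with centers of mass $p$ and $q$, and let $L=\overline{p,q}$ be the line joining them (a single point when the centers coincide). Since $L$ passes through $\re A$ we have $\re A\ \sn\ L$, and since it passes through $\re B$ we have $L\ \sn\ \re B$; thus $\{\re A,L,\re B\}$ is a path, so $\re A$ and $\re B$ are path-connected. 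For $5^o$, suppose $\re A$ is convex and nonempty; picking two subregions of $\re A$ and the straight line $L$ between them, every subregion of $L$ lies in $\re A$, and by Axiom PG.\ref{ax:area} these subregions have non-zero area, so $\Int(\re A)\neq\emptyset$ and no interior subregion of $\re A$ is absent; therefore $\re A$ is not a hole and contains no hole.

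The step I expect to be the main obstacle is $5^o$: one must first settle what ``contains no holes'' is to mean (that $\re A$ is not itself a hole, or that $\re A$ has no hole-shaped subregion) and then upgrade the informal observation ``convexity makes the connecting line fill the region'' into a clean argument excluding both an empty and a punctured interior. A secondary subtlety is the degenerate case of $4^o$ in which the concentric polyhedra share a center, where the ``line joining the centers'' collapses to a point and path-connectedness must still be read off from the overlap relations; and in $1^o$ and $3^o$ one must be willing to call a (possibly non-straight) finite $1$-complex a polytope in the sense used by the paper.
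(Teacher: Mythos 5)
Your proposal is correct and follows essentially the same route as the paper: each item is settled by unwinding the definitions of line-as-path, simplicial complex, string, and convexity, and at the same level of detail (the paper likewise just asserts that a path of pairwise adjacent regions is a polytope for $1^o$, derives $2^o$ and $4^o$ from $1^o$, and calls $6^o$ immediate from the definition of straight line). The only cosmetic difference is in item $5^o$, where you argue the direct implication (convex $\Rightarrow$ nonempty interior, hence not a hole) while the paper argues the contrapositive (empty interior $\Rightarrow$ some connecting segment leaves the region, hence not convex); the underlying observation about line segments between boundary subregions is identical.
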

\begin{proof}$\mbox{}$\\
\begin{compactenum}[1$^o$]
\item 
We prove only the following case.  Let $\re A,\re B$ be path-connected regions, $L$ a line with one end in $\re A$ and the other end in $\re B$, $\re E\ \sn\ \re H$ a pair of connected regions in the path between $\re A,\re B$ and let $\overline{\re E, \re H}\subset L$ (a line segment in $L$).  Hence, the line $\overline{\re A, \re B}$ is defined by the path between $\re A,\re B$, which is a sequence of pairwise adjacent regions.  Hence, the line is a polytope.
\item $1\Rightarrow 2$, since adjacent polygons have a common edge.
\item Since a string is defined by a path, the result follows.
\item $1\Rightarrow 4$.
\item A convex region contains all line segments connecting any pair of its subregions.  Without loss of generality, let $\Int A = \emptyset$, a hole in closed region $\re A$.  All of the subregions of a line segment connecting any pair of subregions along the border of $\re A$ are not contained in $\re A$.  Hence, $\re A$ is not convex.
\item Immediate from the definition of straight line.
\end{compactenum}
\end{proof}

\begin{figure}[!ht]
\centering
\includegraphics[width=60mm]{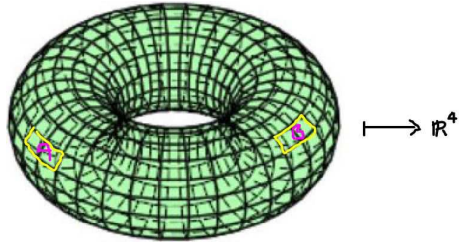}
\caption[]{Torus Regions}
\label{fig:torusmap}
\end{figure}

\begin{theorem}\label{thm:re2reBUT} {\bf Region-2-Region Based Borsuk-Ulam Theorem (re2reBUT)}{\rm \cite{PetersTozzi2016arXivWiredFriend}}.\\
Suppose that $(X, \tau_X, {\sn}_X)$, where space $X$ is $n$-dimensional and $(\mathbb{R}^{k}, \tau_{\mathbb{R}^{k}}, {\sn}_{\mathbb{R}^{k}}), k > 0$ are topological spaces endowed with compatible strong proximities.
If $f:2^X\longrightarrow \mathbb{R}^{k}$ is $\sn$-Re.s.p continuous, then $f(A) = f(\righthalfcap A)$ for some $A\in 2^X$.
\end{theorem}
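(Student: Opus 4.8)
The plan is to reduce the region-based statement to the classical Borsuk--Ulam theorem for a continuous map $S^{n}\to\mathbb{R}^{n}$ by passing from regions to representative points. Since $X$ is $n$-dimensional and, in this geometry, every region is a compact connected cell of non-zero area (Axiom~PG.\ref{ax:area}), I would first fix an identification of the carrier of $X$ with the $n$-sphere $S^{n}$ under which the antipodal operator $\righthalfcap$ on $2^{X}$ corresponds to the usual antipodal map $x\mapsto -x$. This is precisely what the hyperplane description of antipodal regions supplies: if $p\in A$ and $q\in\righthalfcap A$ lie on disjoint parallel hyperplanes $P,Q\subset S^{n}$, then $A$ and $\righthalfcap A$ occupy opposite caps, so the region-level involution is carried to the point-level involution. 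Let $\rho\colon S^{n}\longrightarrow 2^{X}$ send each point $x$ to the region containing it; by the region-decomposition structure $\rho$ is well defined off the shared boundary regions, and $\rho(-x)=\righthalfcap\rho(x)$.

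Next I would form the point map $h\assign f\circ\rho\colon S^{n}\longrightarrow\mathbb{R}^{k}$ and show it is continuous. Here the hypothesis that $f$ is $\sn$-Re.s.p.\ continuous does the work: a region-based strongly proximal map carries strongly near regions to strongly near images, and by Proposition~\ref{thm:sn-implies-near} strong nearness implies ordinary (Lodato) nearness, so $f$ is in particular proximally continuous with respect to the compatible proximities $\near_{X}$ and $\near_{\mathbb{R}^{k}}$; composing with the assignment $\rho$ of a point to its region yields a proximally continuous, hence topologically continuous, map $h$. If $k<n$ I would pad $h$ with zero coordinates to a continuous map $\tilde h\colon S^{n}\to\mathbb{R}^{n}$ (and if $k>n$ the dimensional hypothesis has to be read as $k\le n$, since otherwise the antipodal map is itself a counterexample). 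Applying the classical Borsuk--Ulam theorem to $\tilde h$ produces $x_{0}\in S^{n}$ with $\tilde h(x_{0})=\tilde h(-x_{0})$, whence $h(x_{0})=h(-x_{0})$, that is
\[
f\bigl(\rho(x_{0})\bigr)=f\bigl(\rho(-x_{0})\bigr)=f\bigl(\righthalfcap\rho(x_{0})\bigr).
\]
Setting $A\assign\rho(x_{0})\in 2^{X}$ then gives $f(A)=f(\righthalfcap A)$, which is the assertion.

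The step I expect to be the main obstacle is the rigorous passage from the region level to the point level. One must check that $\rho$ is single-valued enough -- the only ambiguity is the boundary regions shared by two cells, a lower-dimensional set that can be absorbed by a small perturbation or removed by letting $\rho$ select the center-of-mass cell -- and, more importantly, that $\sn$-Re.s.p.\ continuity of $f$ genuinely forces topological continuity of $h$ rather than merely a weaker separation-preserving property; this is exactly where the compatibility of the strong proximities on $X$ and $\mathbb{R}^{k}$ and Propositions~\ref{thm:sn-implies-near} and~\ref{thm:sn-implies-dnear} must be combined. Once the continuity of $h$ is secured, the remainder is a direct invocation of the classical theorem, so essentially all of the content lies in establishing the dictionary between an $n$-dimensional region space equipped with the antipodal operator $\righthalfcap$ and $S^{n}$ equipped with $x\mapsto -x$, and in transporting continuity across it.
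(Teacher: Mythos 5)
You should first note that the paper does not actually prove this theorem: it is stated with a citation to an external reference and used as an imported result, so there is no internal proof to compare yours against. Judged on its own terms, your reduction to the classical Borsuk--Ulam theorem is the natural strategy, but as written it has gaps that are not merely technical.

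The most serious one is the very first step. The hypothesis only says that $X$ is an $n$-dimensional space carrying a compatible strong proximity; nothing identifies its carrier with $S^{n}$, and the antipodal operator $\righthalfcap$ is only ever defined in this paper for strings and regions on an $n$-sphere via disjoint parallel hyperplanes. Your dictionary $\rho(-x)=\righthalfcap\rho(x)$ therefore presupposes structure ($X\cong S^{n}$ together with a fixed free involution on regions) that must either be added to the hypotheses or extracted from the unstated definition of $\righthalfcap$ on $2^{X}$. Second, $\rho$ is not a map: regions in this geometry may overlap (indeed $A\ \sn\ B$ means precisely that they share subregions of non-zero area, by Axiom~PG.\ref{ax:area}), so a point can lie in many regions, and discarding a ``lower-dimensional'' ambiguity set is not available when the ambiguity is itself a union of regions. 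Third, the continuity transfer is asserted rather than proved: $\sn$-Re.s.p.\ continuity constrains $f$ on pairs of strongly near regions, and Proposition~\ref{thm:sn-implies-near} only converts strong nearness to Lodato nearness on the source side; to get topological continuity of $h=f\circ\rho$ you would also need $\rho$ itself to be proximally continuous, which is exactly where the multivaluedness bites. Finally, you are right that $k>0$ cannot stand as a hypothesis --- for $k>n$ the inclusion $S^{n}\hookrightarrow\mathbb{R}^{n+1}$ already separates antipodes --- but flagging this means your argument proves a corrected statement, not the quoted one. In short, the skeleton (regions $\to$ representative points $\to$ classical Borsuk--Ulam) is plausible, but each of the three arrows needs a construction that the present sketch does not supply.
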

$\mbox{}$\\
\vspace{3mm}

\begin{example}
Let $X$ be a set of rectangular-shaped regions on a torus represented in Fig.~\ref{fig:torusmap}.  Assume that each region $A\in X$ has the same area and colour in the RGB colour space.  Regions $A,B$ in Fig.~\ref{fig:torusmap} are examples of antipodes.  Further, let $(X, \tau_X, {\sn}_X)$, $(\mathbb{R}^{4}, \tau_{\mathbb{R}^{4}}, {\sn}_{\mathbb{R}^{4}}), k = 4$ be topological spaces endowed with compatible strong proximities.  In addition, let $f:2^X\longrightarrow \mathbb{R}^{k}$ be $\sn$-Re.s.p continuous such that $f(A) = (\area A,red,green,blue)\in \mathbb{R}^4$, where, for example, \emph{red} is the intensity of redness on the surface of $A$.  From Theorem~\ref{thm:re2reBUT}, $f(A) = f(B)$.
\qquad \textcolor{blue}{$\blacksquare$}
\end{example}

Recall that a \emph{string} is a path described by a moving particle such as a blood vessel or a chunk of matter wandering through interstellar space.  A \emph{string space} is a nonempty set of strings.  

\begin{corollary}\label{cor:stringSpace}{\rm \cite{PetersTozzi2016arXivWiredFriend}}.\\
Let $X$ be a string space.  Assume $(X, \tau_X, {\sn}_X)$, $(\mathbb{R}^k, \tau_{\mathbb{R}^k}, {\sn}_{\mathbb{R}^k})$, $k > 0$ are topological string spaces endowed with compatible strong proximities.
If $f:2^X\longrightarrow \mathbb{R}^k$ is $\sn$-Re.s.p continuous, then $f(\str A) = f(\righthalfcap \str A)$ for some string $\str A\in 2^X$.
\end{corollary}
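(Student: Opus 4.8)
The plan is to obtain Corollary~\ref{cor:stringSpace} as a direct specialization of Theorem~\ref{thm:re2reBUT} (the region-to-region Borsuk--Ulam theorem), the only real work being to check that a string space is an instance of the ambient structure required there. First I would recall that, as noted in Section~1, every string is a spatial region, so a string space $X$ is in particular a nonempty set of physical regions; moreover $X$ is realized on the surface of an $S^n$ or inside an $n$-dimensional normed linear space, which supplies the dimension hypothesis of Theorem~\ref{thm:re2reBUT}. Thus the triple $(X,\tau_X,{\sn}_X)$ is exactly of the form demanded there, and $2^X$ is the family of sets of strings on which $f$ acts.

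Next I would verify that the remaining hypotheses carry over verbatim: $(\mathbb{R}^k,\tau_{\mathbb{R}^k},{\sn}_{\mathbb{R}^k})$ with $k>0$ is assumed, the strong proximities ${\sn}_X$ and ${\sn}_{\mathbb{R}^k}$ are assumed compatible, and $f:2^X\longrightarrow\mathbb{R}^k$ is assumed $\sn$-Re.s.p continuous. I would also point out that the antipode operator $\righthalfcap$ on strings is the restriction to $X$ of the region antipode operator used in Theorem~\ref{thm:re2reBUT}: a pair of strings $\str A,\righthalfcap\str A$ is antipodal precisely when, for some subregions $p\in\str A$ and $q\in\righthalfcap\str A$, there are disjoint parallel hyperplanes $P,Q$ with $p\in P$ and $q\in Q$, which is the region-level definition specialized to strings. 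Hence an antipodal string is a legitimate instance of an antipodal region, and $f(\righthalfcap\str A)$ is well defined for $\str A\in 2^X$.

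With these identifications in place, I would simply invoke Theorem~\ref{thm:re2reBUT} applied to the string space $X$: since $f:2^X\longrightarrow\mathbb{R}^k$ is $\sn$-Re.s.p continuous on the $n$-dimensional space $X$ equipped with compatible strong proximities, the theorem yields an element $\str A\in 2^X$ with $f(\str A)=f(\righthalfcap\str A)$, and because every region in $X$ is a string this $\str A$ is (a collection of) strings, which is exactly the assertion of the corollary.

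The step I expect to require the most care is not a computation but a bookkeeping one: making sure that the standing assumptions of Theorem~\ref{thm:re2reBUT} --- $n$-dimensionality of the ambient space, compatibility of the two strong proximities, and the precise meaning of $\sn$-Re.s.p continuity --- are genuinely inherited by the string-space setting rather than merely re-assumed, so that the appeal to the theorem is legitimate and no hidden hypothesis peculiar to strings (for instance, that the antipodal strings actually occur as an antipodal pair inside $X$) is being smuggled in. Once that is confirmed, no further argument is needed.
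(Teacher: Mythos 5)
Your proposal is correct and matches the paper's (implicit) argument: the paper offers no written proof of Corollary~\ref{cor:stringSpace}, presenting it as an immediate specialization of Theorem~\ref{thm:re2reBUT} to string spaces, which is exactly the reduction you carry out --- since every string is a spatial region, a string space is a region space and the theorem applies verbatim. Your extra bookkeeping about the antipode operator and the inherited hypotheses is sound and, if anything, more careful than what the paper records.
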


\begin{example}
Let $X$ be a set of elliptical-shaped strings represented in Fig.~\ref{fig:strings0}.  Assume that each string $\str A\in X$ has the same curvature and colour in the RGB colour space.  Strings $\str A,\str B$ in Fig.~\ref{fig:strings0} are examples of antipodes.  Further, let 
\[
\left(X, \tau_X, {\sn}_X\right), (\mathbb{R}^{4}, \tau_{\mathbb{R}^{4}}, {\sn}_{\mathbb{R}^{4}}), k = 4
\]
 be topological spaces endowed with compatible strong proximities.  In addition, let $f:2^X\longrightarrow \mathbb{R}^{k}$ be $\sn$-Re.s.p continuous such that $f(\str A) = (\curvature(\str A),red,green,blue)\in \mathbb{R}^4$, where, for example, \emph{red} is the intensity of redness on $\str A$.  From Theorem~\ref{thm:re2reBUT}, $f(\str A) = f(\str B)$.
\qquad \textcolor{blue}{$\blacksquare$}
\end{example}

Recall that a \emph{worldsheet} is a region of space covered by strings, {\em i.e.}, every region $\re A$ in a worldsheet $\wsh M$ has nonempty intersection with at least one string $\str B\in \wsh M$.  In effect, $\re A\ \sn\ \str B$.  A \emph{worldsheet space} is a nonempty set of worldsheets.

\begin{corollary}{\rm \cite{PetersTozzi2016arXivWiredFriend}}.\\
Let $X$ be a worldsheet space.  Assume $(X, \tau_X, {\sn}_X)$, $(\mathbb{R}^k, \tau_{\mathbb{R}^k}, {\sn}_{\mathbb{R}^k})$, $k > 0$ are topological worldsheet spaces endowed with compatible strong proximities.
If $f:2^X\longrightarrow \mathbb{R}^k$ is $\sn$-Re.s.p continuous, then $f(\wsh A) = f(\righthalfcap \wsh A)$ for some worldsheet $\wsh A\in 2^X$.
\end{corollary}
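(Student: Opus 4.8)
The plan is to obtain this corollary as an immediate specialization of the Region-2-Region Based Borsuk-Ulam Theorem (Theorem~\ref{thm:re2reBUT}), in precisely the way the string-space version (Corollary~\ref{cor:stringSpace}) was derived. First I would record that a worldsheet is itself a region of physical space: by definition $\wsh M$ is a region every subregion of which has nonempty intersection with at least one of its covering strings. Hence a worldsheet space $X$ is a particular nonempty set of regions, and $2^X$ is a family of such sets, so the domain $2^X$ of $f$ is exactly the sort of object that appears in Theorem~\ref{thm:re2reBUT}.

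Next I would check that the ambient hypotheses transfer. We are given that $X$ is $n$-dimensional and that $(X,\tau_X,{\sn}_X)$ and $(\mathbb{R}^k,\tau_{\mathbb{R}^k},{\sn}_{\mathbb{R}^k})$ are endowed with compatible strong proximities, and that $f:2^X\longrightarrow\mathbb{R}^k$ is $\sn$-Re.s.p continuous --- these are verbatim the hypotheses of Theorem~\ref{thm:re2reBUT}. The antipodal worldsheet $\righthalfcap\wsh A$ is defined just as antipodal regions and antipodal strings are: there exist disjoint parallel hyperplanes $P,Q\subset X$ and subregions $p\in\wsh A$, $q\in\righthalfcap\wsh A$ with $p\in P$ and $q\in Q$. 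Applying Theorem~\ref{thm:re2reBUT} then furnishes some $A\in 2^X$ with $f(A)=f(\righthalfcap A)$; since every member of $2^X$ in this setting is a worldsheet, we may write $A=\wsh A$ and conclude $f(\wsh A)=f(\righthalfcap\wsh A)$.

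The substantive content of the argument is therefore not the deduction itself --- which is a one-line appeal to Theorem~\ref{thm:re2reBUT} --- but the verification that a worldsheet space legitimately meets that theorem's ambient requirements: that the strong proximity ${\sn}_X$ restricted to worldsheets is compatible with ${\sn}_{\mathbb{R}^k}$ in the sense needed for $\sn$-Re.s.p continuity, and that the dimension clause ``$X$ is $n$-dimensional'' makes sense when each element of $X$ is a worldsheet, i.e.\ a two-parameter family of strings. I expect this bookkeeping, rather than any genuinely new topological obstacle, to be the only delicate point; once it is dispatched, the proof closes exactly as in Corollary~\ref{cor:stringSpace}.
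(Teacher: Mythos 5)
Your proposal is correct and matches the paper's intent: the paper states this result as an immediate corollary of Theorem~2 (the Region-2-Region Based Borsuk-Ulam Theorem) without writing out a proof, and your derivation---observing that a worldsheet is itself a region so that a worldsheet space is a special case of the region setting, exactly as in the string-space corollary---is precisely the intended one-line specialization. Your remark that the only delicate point is the transfer of the ambient hypotheses (compatible strong proximities, the dimension clause) is a fair observation, since the paper silently assumes this bookkeeping.
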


\begin{theorem}\label{thm:redSnBUT}~{\rm \cite{PetersGuadagni2015strongConnectedness}}
Suppose that $(2^{S^n}, \tau_{2^{S^n}}, \snd)$ and $(\mathbb{R}^n, \tau_{\mathbb{R}^n}, \snd)$ are topological spaces endowed with compatible strong proximities.  Let $A\in 2^{S^n}$, a region in the family of regions in $2^{S^n}$.
If $f:2^{S^n}\longrightarrow \mathbb{R}^n$ is $\dnear$ Re.d.p.c. continuous, then $f(A) = f(\righthalfcap A)$ for antipodal region $\righthalfcap A\in 2^{S^n}$.
\end{theorem}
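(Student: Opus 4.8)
The plan is to derive Theorem~\ref{thm:redSnBUT} as a descriptive-proximity analogue of the region-to-region Borsuk--Ulam Theorem (Theorem~\ref{thm:re2reBUT}), replacing the spatial strong proximity $\sn$ with the descriptive strong proximity $\snd$ throughout and invoking the bridge results of Proposition~\ref{thm:sn-implies-dnear}. First I would fix the data: the region $A\in 2^{S^n}$, its antipodal region $\righthalfcap A$ (in the sense of the antipodal-strings/regions definition given in the introduction, i.e. there are disjoint parallel hyperplanes $P,Q\subset S^n$ separating a subregion of $A$ from a subregion of $\righthalfcap A$), and the map $f:2^{S^n}\longrightarrow\mathbb{R}^n$, which is assumed $\dnear$ Re.d.p.c.\ continuous. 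The first step is to unpack what Re.d.p.c.\ continuity buys us: $A\ \dnear\ B$ in $2^{S^n}$ implies $f(A)\ \dnear\ f(B)$ in $\mathbb{R}^n$, and by (dP5)/(dsnP6) applied in the separated feature space this forces $\Phi(f(A))=\Phi(f(B))$, hence, when descriptions are taken to be the points themselves in $\mathbb{R}^n$, $f(A)=f(B)$.

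Next I would reduce to the spatial case. The key observation is that antipodal regions on $S^n$ always carry matching descriptions in the situations of interest (same area, same curvature, same colour --- as in the worked examples), so $A\ \snd\ \righthalfcap A$ holds descriptively even though $A\ \sfar\ \righthalfcap A$ spatially; more precisely one sets up a descriptive strong proximity in which antipodal regions are descriptively strongly near by construction of $\Phi$. Then I would apply the classical Borsuk--Ulam mechanism: restrict attention to a continuous selection $x\mapsto R_x$ of regions parametrized by $S^n$ (a region-valued section), compose with $f$ to get a continuous map $g:S^n\to\mathbb{R}^n$, and invoke the ordinary Borsuk--Ulam Theorem to obtain a point $x_0\in S^n$ with $g(x_0)=g(-x_0)$, i.e.\ $f(R_{x_0})=f(R_{-x_0})$. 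Identifying $R_{x_0}=A$ and $R_{-x_0}=\righthalfcap A$ gives the conclusion. The descriptive-continuity hypothesis is exactly what makes the composition $g$ continuous in the feature topology $\tau_{\mathbb{R}^n}$, and the compatibility of the strong proximities with the topologies $\tau_{2^{S^n}}$ and $\tau_{\mathbb{R}^n}$ is what lets us pass between the proximal statement $A\ \snd\ \righthalfcap A \Rightarrow f(A)\ \snd\ f(\righthalfcap A)$ and the topological statement needed for Borsuk--Ulam.

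I expect the main obstacle to be the bookkeeping around what ``$\dnear$ Re.d.p.c.\ continuous'' means for a set-valued map $2^{S^n}\to\mathbb{R}^n$ and how to legitimately extract an ordinary continuous single-valued map on $S^n$ from it --- i.e.\ making precise the passage from the power-set level to the point level so that the classical Borsuk--Ulam Theorem can be cited. A secondary subtlety is ensuring the descriptive strong proximity $\snd$ on $2^{S^n}$ genuinely relates $A$ and $\righthalfcap A$; this is handled by the standing assumption that antipodal regions have matching descriptions, so that $A\ \dcap\ \righthalfcap A\neq\emptyset$ in the descriptive sense and hence $A\ \snd\ \righthalfcap A$ via (dsnP2)'s converse (Proposition 1 on descriptive proximity spaces) together with (dsnP4). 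Once these two points are in place, the remainder is the standard Borsuk--Ulam argument carried out in the feature space $\mathbb{R}^n$, exactly paralleling the proof of Theorem~\ref{thm:re2reBUT} with $\sn$ replaced by $\snd$ and Proposition~\ref{thm:sn-implies-dnear} supplying the needed implications.
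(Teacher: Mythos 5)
The paper does not actually prove Theorem~\ref{thm:redSnBUT}; it imports the statement verbatim from \cite{PetersGuadagni2015strongConnectedness}, so there is no internal proof to compare against. Judged on its own terms, your proposal contains the correct argument, but only in its first paragraph and closing remarks. In this framework the entire content of the theorem is that (i) the antipodal region $\righthalfcap A$ is taken to have a description matching that of $A$ (this is how descriptively near antipodal regions are introduced in the Introduction), so $A\ \dnear\ \righthalfcap A$; (ii) Re.d.p.c.\ continuity transports this to $f(A)\ \dnear\ f(\righthalfcap A)$ in $\mathbb{R}^n$; and (iii) in the feature space the description of a point is the point itself, so descriptive nearness collapses to equality by the separation axiom. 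That three-step proximal argument is the whole proof, and you should state the matching-description hypothesis explicitly rather than as a ``standing assumption in the situations of interest,'' because it is doing all the work: for an arbitrary $\Phi$ that distinguishes antipodes (say, a coordinate probe) the claim fails, so the theorem is only true under the paper's convention that antipodal regions are paired with matching descriptions.

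Your middle paragraph, by contrast, is a genuine gap rather than an alternative route. The hypotheses supply no continuous region-valued section $x\mapsto R_x$ of $S^n$, and $\dnear$ Re.d.p.c.\ continuity is a condition on descriptions under a proximity relation, not topological continuity of a single-valued map $S^n\to\mathbb{R}^n$; so the classical Borsuk--Ulam theorem cannot be invoked, and the passage from the power-set level to the point level that you flag as ``bookkeeping'' is in fact an unbridgeable step here. It is also unnecessary: once $A\ \dnear\ \righthalfcap A$ is granted, the conclusion holds for the given pair directly, with no existence argument over $S^n$ required. Delete the reduction to classical Borsuk--Ulam and promote your first paragraph, with the matching-description hypothesis made explicit, to the proof.
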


\begin{theorem}\label{cor:strBUTsimplest}{\rm \cite{PetersTozzi2016arXivWiredFriend}}.\\
Suppose that $(2^{S^n}, \tau_{2^{S^n}}, \snd)$ and $(\mathbb{R}^n, \tau_{\mathbb{R}^n}, \snd)$ are topological spaces endowed with compatible strong proximities.  Let $A\in 2^{S^n}$, a string in the family of strings in $2^{S^n}$.
If $f:2^{S^n}\longrightarrow \mathbb{R}^n$ is $\dnear$ Re.d.p.c. continuous, then $f(A) = f(\righthalfcap A)$ for antipodal string $\righthalfcap A\in 2^{S^n}$.
\end{theorem}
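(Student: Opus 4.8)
The plan is to derive this statement as a direct specialization of Theorem~\ref{thm:redSnBUT}, the region-based descriptive Borsuk--Ulam result, in exactly the way the string-space and worldsheet-space corollaries were obtained from the re2reBUT theorem. The structural observation that makes this work is that every string is a spatial region (``every string is a spatial region, but not every spatial region is a string''), so a string $A\in 2^{S^n}$ is in particular a member of the family of regions to which Theorem~\ref{thm:redSnBUT} already applies.

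First I would check that the hypotheses of Theorem~\ref{thm:redSnBUT} hold verbatim here: the ambient spaces $(2^{S^n},\tau_{2^{S^n}},\snd)$ and $(\mathbb{R}^n,\tau_{\mathbb{R}^n},\snd)$ carry compatible strong proximities by assumption, and $f$ is $\dnear$ Re.d.p.c. continuous, which is precisely the continuity requirement in that theorem. Next I would note that the antipodal operator $\righthalfcap$ restricts to the subclass of strings: if $A$ is a path traced by a particle moving on $S^n$, then $\righthalfcap A$, built from the antipodal points of $S^n$, is again such a path, hence a string, and the pair $A,\righthalfcap A$ is antipodal in the sense of the disjoint parallel hyperplanes $P,Q\subset S^n$ introduced earlier. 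Applying Theorem~\ref{thm:redSnBUT} to $A$ viewed as a region then produces a region $\righthalfcap A$ with $f(A)=f(\righthalfcap A)$, and that region is a string, which is exactly the claimed conclusion.

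Since the argument is just the restriction of an already-established theorem to a subclass of its domain, there is no genuine obstacle; the only point deserving a line of justification is that the class of strings is closed under the antipodal map and inherits the descriptive strong-proximity structure from $2^{S^n}$, so that ``$\righthalfcap A$ is a string'' is a legitimate strengthening of ``$\righthalfcap A$ is a region.'' I would therefore phrase the final write-up in one sentence: the result is immediate from Theorem~\ref{thm:redSnBUT}, because a string is a region and the antipode of a string is a string.
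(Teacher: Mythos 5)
The paper itself offers no proof of this theorem---it is imported by citation and stated immediately after the region version (Theorem~\ref{thm:redSnBUT})---but your derivation by specialization is exactly the intended route, mirroring the way Corollary~\ref{cor:stringSpace} is obtained from Theorem~\ref{thm:re2reBUT} in the spatial setting. Your one substantive addition, namely that the class of strings is closed under the antipodal operator so that the antipodal region $\righthalfcap A$ supplied by Theorem~\ref{thm:redSnBUT} is again a string, is precisely the point the paper leaves implicit, and your justification of it (a path on $S^n$ maps to a path under the antipodal correspondence) is adequate at the paper's level of rigor.
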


\begin{example}{\rm \cite{PetersTozzi2016arXivWiredFriend}}.\\
Let the Euclidean spaces $S^2$ and $\mathbb{R}^3$ be endowed with the strong proximity $\sn$ and let $\str A,\righthalfcap \str A$ be antipodal strings in $S^2$.   Further, let $f$ be a proximally continuous mapping on $2^{S^2}$ into $2^{\mathbb{R}^4}$ defined by
\begin{align*}
\str A &\in 2^{S^2},\\
\Phi &: 2^{S^2}\longrightarrow \mathbb{R}^3,\ \mbox{defined by}\\
\Phi(\str A) &= \left(\mbox{bounded,length,shape}\right)\in \mathbb{R}^3,\ \mbox{and}\\
f:2^{S^2} &\longrightarrow 2^{\mathbb{R}^4},\ \mbox{defined by}\\ 
f(2^{S^2}) &= \left\{\Phi(\str A)\in \mathbb{R}^3: \str A\in 2^{S^2}\right\}\in 2^{\mathbb{R}^4}.\mbox{\qquad \textcolor{blue}{$\blacksquare$}}
\end{align*}
\end{example}

\section{Descriptive Physical Geometry}
This section introduces the axioms for descriptive physical geometry.  
Let $X$ be a nonempty set of regions, $2^X$ the collection of all subsets of $X$, $2^{2^X}$ the collection of all collections of regions in $2^X$.  Let $\re A$ be a region in $\re A$.  Let $\phi:\re A\longrightarrow \mathbb{R}$ be a probe function that maps a region $\re A$ to a feature value in the set of reals $\mathbb{R}$. The description of region $\re A$ (denoted by $\Phi(\re A)$ is defined by
\[
\Phi(\re A) = \left(\phi_1(\re A),\dots,\phi_n(\re A)\right)\in \mathbb{R}^n\ \mbox{(feature vector describing $\re A$)}.
\]
Let $\mathscr{R}$ be a collection of regions.  The description of $\mathscr{R}$ (denoted by $\Phi(\mathscr{R})$) is defined by
\[
\Phi(\mathscr{R}) = \left\{\Phi(\re A):\re A\in \mathscr{R}\right\}\ \mbox{(Description of a Collection of Regions)}.
\]
A class of regions with a representative region $\re A$ (denoted by $\mathscr{R}_{\re A}$) is defined by
\[
\mathscr{R}_{\re A}= \left\{\re B\in 2^X: \re A\ \dnear\ \re B\right\}\ \mbox{(Class of Regions)}.
\]
In other words, a class of regions is a collection of regions that have a common description\footnote{The notion of a class of regions was suggested by Clara Guadagni}.

In addition to descriptive proximity $\dnear$, the axioms for descriptive physical geometry includes the notation $\|_{\Phi}$ (descriptively parallel).  For regions $\re A, \re B$, we write $\re A\ \|_{\Phi}\ \re B$, which reads \emph{region $\re A$ is descriptively parallel to region $\re B$}, {\em i.e.}, parts of $\re A$ and $\re B$ have matching descriptions.\\
\vspace{3mm}

\noindent{\bf Descriptive Physical Geometry Axioms}
\begin{compactenum}[{\bf Axiom d}.1]
\item\label{ax:region3} $\re A\ \snd\ \re B$ for every pair of adjacent subregions $\re A,\re B$ in the path of a line.
\item\label{ax:description} Every region has a description.
\item The description of a polytope is a sequence of descriptions of the members of the polytope.
\item A closed region $\re A$ has a description, if and only if $\Phi(\re A) = \Phi(\cl(\re A))$.
\item\label{ax:congruent} Regions are descriptively congruent, provided the regions have matching feature vectors. 
\item\label{ax:shape} $\shape(\re A)\ \dnear\ \shape(\re B)$, if and only if $\perim(\bdy(\re A)) = \perim(\bdy(\re B))$. 
\item $\shape(\re A)\ \dnear\ \shape(\re B)$, if and only if $\area(\Int(\re A)) = \area(\Int(\re B))$.
\item\label{ax:descriptivelyParallel} $\re A\ \|_{\Phi}\ \re B$, if and only if $\str A\ \sfar\ \str B\ \&\ \re A\ \dnear\ \re B$. \qquad \textcolor{blue}{$\blacksquare$}
\end{compactenum}

\begin{remark}
Axiom d.\ref{ax:shape} gets its inspiration from~\cite{Vakil2011AMMperimeter}.  Let $N_r(X)$ be a neighbourhood of a plane set $X$ with radius $r>0$, defined by
\[
N_r(X) = \left\{y: \norm{x - y} \leq r\ \mbox{for some}\ x\in X\right\}.
\]
For a shape $\shape X$, the perimeter of $\shape X$ equals $\perim\left(N_r(X)\right)$.  

Let $\shape(\re A),\shape(\re B)$ be shapes of regions $\re A,\re B$.  From axiom~d.\ref{ax:shape}, $\perim(\re A) = \perim(\re B)$ implies that $\shape(\re A)\ \dnear\ \shape(\re B)$.
\qquad \textcolor{blue}{$\blacksquare$}
\end{remark}
$\mbox{}$\\
\vspace{3mm}

\begin{figure}[!ht]
\begin{center}
\begin{pspicture}
 (0.0,0.5)(2.5,3.5)
\psframe[linecolor=black](-0.8,0.0)(4.5,3.5)
\pscircle[linecolor=black,linestyle=dotted,dotsep=0.05,fillstyle=solid,fillcolor=yellow](0.58,1.55){1.28}
\psline[linewidth = 2pt,linecolor = blue](0.58,0.55)(0.58,2.55)
\psline[linestyle=dotted,dotsep=0.05,linewidth = 2pt,linecolor = blue](0.58,2.55)(0.58,3.50)
\psline[linewidth = 2pt,linecolor = red](0.58,0.95)(0.58,1.85)
\pscircle[linecolor=black,linestyle=dotted,dotsep=0.05,fillstyle=solid,fillcolor=yellow](3.38,1.85){1.00}
\psline[linewidth = 2pt,linecolor = blue](3.38,1.05)(3.38,2.55)
\psline[linestyle=dotted,dotsep=0.05,linewidth = 2pt,linecolor = blue](3.38,2.55)(3.38,3.50)
\psline[linewidth = 2pt,linecolor = red](3.38,1.55)(3.38,2.05)
\rput(-0.5,3.2){\large  $\boldsymbol{X}$}
\rput(0.28,0.85){\footnotesize $\boldsymbol{A}$}
\rput(0.98,1.35){\footnotesize $\boldsymbol{\re\ a}$}
\rput(0.08,2.55){\large $\boldsymbol{C}$}
\rput(3.78,1.85){\footnotesize $\boldsymbol{\re\ b}$}
\rput(3.18,1.25){\footnotesize $\boldsymbol{B}$}
\rput(2.80,2.55){\large  $\boldsymbol{E}$}
 \end{pspicture}
\end{center}
\caption{\footnotesize Strongly Far Descriptively Near Parallel Lines}
\label{fig:stronglyFar3}
\end{figure}
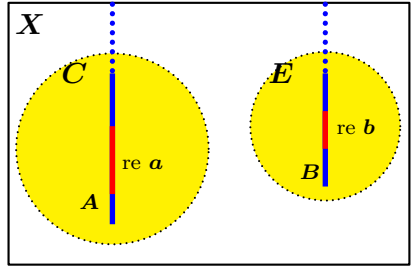
$\mbox{}$\\
\vspace{3mm}

\section{Descriptively Parallel Classes of Regions}
Axiom~\ref{ax:descriptivelyParallel} is a strong requirement for descriptively parallel regions.  Let $\re A, \re B$ be parallel, \emph{i.e.}, $\re A\ \|\ \re B$, which mean that $\re A, \re B$ satisfy Lemma~\ref{lem:parallelRegions}.  In addition, $\re A, \re B$ are descriptively parallel, provided $\re A\ \dnear\ \re B$.

\begin{example}
Assume that the lines $A,B$ in Fig.~\ref{fig:stronglyFar3} are parallel.  Let $A,B$ have features such as colour (red, green, blue or RGB).  Let $\re\ a$ be a subregion of $\re A$. A feature vector for each subregion region of $A$ is $\left(\varphi_R(\re\ a),\varphi_G(\re\ a),\varphi_B(\re\ a)\right)$, where, for example, the probe function $\varphi_R:X\longrightarrow\mathbb{R}$ is defined by $\varphi_R(\re\ a) =$ degree of redness of $\re\ a$.  Each subregion of $\re B$ has a similar feature vector.  Let $\re\ b$ be a subregion of $\re B$.  From Fig.~\ref{fig:stronglyFar3}, $\varphi_R(\re\ a) = \varphi_R(\re\ b)$.  Hence, from Axiom~\ref{ax:descriptivelyParallel}, $\re A\ \|\ \re B$.
\qquad \textcolor{blue}{$\blacksquare$}
\end{example}

Let $\mathscr{R}_{\re A},\mathscr{R}_{\re A}$ be a pair of classes of regions with representative regions $\re A,\re B$, respectively.  The classes $\mathscr{R}_{\re A},\mathscr{R}_{\re A}$ are descriptively near (denoted by $\mathscr{R}_{\re A}\ \dnear\ \mathscr{R}_{\re A}$), provided $\re\ X\ \dnear\ \re\ Y$ for some region $\re\ X\in \mathscr{R}_{\re A}, \re\ Y\in \mathscr{R}_{\re B}$.

Then we have the following parallel axiom for classes of regions.
\vspace{3mm} 
\begin{description}
\item[{\bf Axiom d.9 Parallel Classes of Regions:}]\label{ax:ParallelClasses}$\mbox{}$\\ 
Let $\re\ X\in \mathscr{R}_{\re A}, \re\ Y\in \mathscr{R}_{\re B}$.   $\mathscr{R}_{\re A}\ \|\ \mathscr{R}_{\re B}$, if and only if 
\[
\re X\ \|\ \re Y
\]
for each pair of regions in $\re X\in \mathscr{R}_{\re A}, \re Y\in \mathscr{R}_{\re B}$.
\end{description}

\begin{lemma}\label{lem:parallelClasses}
If each pair of regions $\re X\in \mathscr{R}_{\re A}, \re Y\in \mathscr{R}_{\re B}$ is expanded indefinitely and $\re X\ \sfar\ \re Y$, then $\mathscr{R}_{\re A}\ \|\ \mathscr{R}_{\re B}$.
\end{lemma}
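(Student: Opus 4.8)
The plan is to reduce Lemma~\ref{lem:parallelClasses} directly to the combination of Lemma~\ref{lem:parallelRegions} and Axiom~d.9 (Parallel Classes of Regions). The hypothesis already gives us exactly the antecedent of Lemma~\ref{lem:parallelRegions} applied pointwise to the members of the two classes, so the argument is essentially a bookkeeping step that transports the pairwise parallelism up to the level of the classes via the \emph{if and only if} in Axiom~d.9.

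First I would fix an arbitrary pair of regions $\re X\in \mathscr{R}_{\re A}$ and $\re Y\in \mathscr{R}_{\re B}$. By hypothesis, $\re X$ and $\re Y$ are expanded indefinitely and $\re X\ \sfar\ \re Y$. Since $\re X$ and $\re Y$ are nonempty regions, Lemma~\ref{lem:parallelRegions} applies verbatim and yields $\re X\ \|\ \re Y$. Next I would observe that the choice of $\re X$ and $\re Y$ was arbitrary, so $\re X\ \|\ \re Y$ holds for \emph{each} pair of regions with $\re X\in \mathscr{R}_{\re A}$, $\re Y\in \mathscr{R}_{\re B}$. This is precisely the right-hand condition in Axiom~d.9, so by the backward direction of that axiom we conclude $\mathscr{R}_{\re A}\ \|\ \mathscr{R}_{\re B}$, which is the desired result.

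The only point requiring a word of care is that Lemma~\ref{lem:parallelRegions} is stated for nonempty regions, so I would note explicitly that every member of a class $\mathscr{R}_{\re A}$ is a region and hence nonempty (regions are nonempty collections of subregions, by the standing conventions of Section~3, and indeed ``every nonempty region is connected''), so the hypothesis of Lemma~\ref{lem:parallelRegions} is met for each such pair. I do not anticipate a genuine obstacle here; the proof is a straightforward two-line invocation, and the main thing to get right is simply citing Axiom~d.9 in the correct (backward) direction and making the universal quantification over pairs explicit before doing so.
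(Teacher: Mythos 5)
Your proposal matches the paper's own proof essentially line for line: both apply Lemma~\ref{lem:parallelRegions} to each pair $\re X\in \mathscr{R}_{\re A}$, $\re Y\in \mathscr{R}_{\re B}$ to get $\re X\ \|\ \re Y$, and then invoke the backward direction of Axiom~d.9 to conclude $\mathscr{R}_{\re A}\ \|\ \mathscr{R}_{\re B}$. Your added remarks on the universal quantification and the nonemptiness of class members are harmless elaborations of the same two-step argument.
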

\begin{proof}
From Lemma~\ref{lem:parallelRegions}, $\re X\ \|\ \re Y$.  Hence, from Axiom~d.\ref{ax:ParallelClasses}, $\mathscr{R}_{\re A}\ \|\ \mathscr{R}_{\re B}$.
\end{proof}

\begin{remark}
It is possible for every member of a class of regions $\mathscr{R}_{\re A}$ to have a matching feature such as colour and a non-matching feature such as shape.
\qquad \textcolor{blue}{$\blacksquare$}
\end{remark}

Axiom d.9 has a descriptive counterpart that sets forth the conditions for classes of regions to be descriptively parallel.

\vspace{3mm} 
\begin{description}
\item[{\bf Axiom d.10 Descriptively Parallel Classes of Regions:}]\label{ax:descriptivelyParallelClasses}$\mbox{}$\\ 
Let $\re\ X\in \mathscr{R}_{\re A}, \re\ Y\in \mathscr{R}_{\re B}$.   $\mathscr{R}_{\re A}\ \|_{\Phi}\ \mathscr{R}_{\re B}$, if and only if 
\[
\re X\ \|_{\Phi}\ \re Y
\]
for each pair of regions in $\re X\in \mathscr{R}_{\re A}, \re Y\in \mathscr{R}_{\re B}$.
\end{description}

\begin{theorem}
Let $\mathscr{R}_{\re A}, \mathscr{R}_{\re B}$ be classes of regions and, for each pair of regions $\re X\in \mathscr{R}_{\re A}, \re Y\in \mathscr{R}_{\re B}$ extended indefinitely, $\re X\ \sfar\ \re Y$ and $\re X\ \dnear\ \re Y$, then $\mathscr{R}_{\re A}\ \|_{\Phi}\ \mathscr{R}_{\re B}$.
\end{theorem}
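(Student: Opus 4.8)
The plan is to deduce the statement by two successive reductions, exactly paralleling the two-step pattern already used for Lemma~\ref{lem:parallelClasses} but carried out with the descriptive parallel relation $\|_{\Phi}$ in place of the spatial one. First I would invoke Axiom~d.\ref{ax:descriptivelyParallelClasses} (Descriptively Parallel Classes of Regions), which reduces the class-level conclusion $\mathscr{R}_{\re A}\ \|_{\Phi}\ \mathscr{R}_{\re B}$ to the pairwise assertion $\re X\ \|_{\Phi}\ \re Y$ for every $\re X\in\mathscr{R}_{\re A}$ and $\re Y\in\mathscr{R}_{\re B}$. So it suffices to fix one arbitrary such pair $\re X,\re Y$ and establish $\re X\ \|_{\Phi}\ \re Y$; since the pair is arbitrary, Axiom~d.\ref{ax:descriptivelyParallelClasses} then upgrades this to the class statement.

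For the pairwise step I would appeal to Axiom~\ref{ax:descriptivelyParallel}, whose right-hand side is precisely the conjunction ``$\re X\ \sfar\ \re Y$ and $\re X\ \dnear\ \re Y$.'' Both conjuncts are handed to us by hypothesis: the pair $\re X,\re Y$ is extended indefinitely with $\re X\ \sfar\ \re Y$, and $\re X\ \dnear\ \re Y$. Along the way I would note that Lemma~\ref{lem:parallelRegions} already gives the companion spatial fact $\re X\ \|\ \re Y$ from the ``extended indefinitely plus $\sfar$'' part of the hypothesis (this is exactly the content used in Lemma~\ref{lem:parallelClasses}), so no additional parallelism hypothesis is needed beyond what the statement provides. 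Feeding the two conjuncts into Axiom~\ref{ax:descriptivelyParallel} yields $\re X\ \|_{\Phi}\ \re Y$, completing the pairwise step and hence, by the first reduction, the theorem.

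I expect the only real wrinkle — and it is bookkeeping rather than mathematics — to be the notational mismatch in Axiom~\ref{ax:descriptivelyParallel}, which is stated with the string symbols $\str A,\str B$ on the strong-far side while its conclusion concerns regions $\re A,\re B$. In the proof I would simply read the strong-far premise at the level of the regions $\re X,\re Y$ themselves: since every string is a region and $\re X\ \sfar\ \re Y$ forces the corresponding string-level non-overlap, the premise of Axiom~\ref{ax:descriptivelyParallel} is met verbatim. With that reading in place there is nothing further to check, and the result follows by chaining Lemma~\ref{lem:parallelRegions}, Axiom~\ref{ax:descriptivelyParallel}, and Axiom~d.\ref{ax:descriptivelyParallelClasses}.
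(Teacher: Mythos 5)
Your proposal is correct and follows essentially the same route as the paper: the paper likewise obtains class-level parallelism from Lemma~\ref{lem:parallelClasses} (hence Lemma~\ref{lem:parallelRegions}), combines it with the pairwise $\dnear$ hypothesis, and concludes via Axiom d.10. If anything, your version is slightly more careful, since you explicitly verify the pairwise condition $\re X\ \|_{\Phi}\ \re Y$ through Axiom d.8 before lifting to the class level, a step the paper leaves implicit.
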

\begin{proof}
From Lemma~\ref{lem:parallelClasses}, $\mathscr{R}_{\re A}\ \|\ \mathscr{R}_{\re B}$.   In addition, $\re X\ \dnear\ \re Y$for each pair $\re X\in \mathscr{R}_{\re A}, \re Y\in \mathscr{R}_{\re B}$.  Hence, from Axiom~d.10, $\mathscr{R}_{\re A}\ \|_{\Phi}\ \mathscr{R}_{\re B}$.
\end{proof}

\begin{figure}[!ht]
\begin{align*}
\begin{CD}
X @>\text{f}>> E\\
@. @VV\text{$\pi$}V\\
@. B
\end{CD}
&\qquad\qquad
\begin{CD}
\Ree A @>\text{f}>> \mathscr{R}_{\re A}\\
@. @VV\text{$\Phi$}V\\
@. B\subset \mathbb{R}^n
\end{CD}
\end{align*}
\caption[]{Two forms of fibre bundles: {\bf Spatial}: $\pi: E\ \longrightarrow\ B$
and {\bf Descriptive}: $\Phi: \mathscr{R}_{\re A}\ \longrightarrow\ B\subset\mathbb{R}^n$.}
\label{fig:projections}
\end{figure}
$\mbox{}$\\
\vspace{2mm}

\section{Two Forms of Fibre Bundles}
Classes of all half-lines with the same endpoint are called bundles in space geometry~\cite[\S 4]{Brossard1964AMMBirkhoffSpaceGeometry}.  In a descriptive proximal physical geometry, the focus shifts from
a spatial to a descriptive source of bundles.
In general, a continuous mapping $\pi:E\longrightarrow B$ is called a \emph{projection} by M. Zisman~\cite{Zisman1999fibreBundles} (also called a \emph{fibre bundle} by S.-S. Chern~\cite[\S 6, p. 683]{Chern1990AMMgeometry}), $E$ the total space, and $B$ the base space.  G. Luke and A.S. Mishchenko call $E$ the fibre space~\cite{LukeMishchenko1998VectorBundles}. For any $x\in B, \pi^{-1}(x) = e\in E$ is a \emph{fibre} of the map $\pi$.  The triple $(E,\pi,B)$ is a \emph{fibre space}, provided $E,B$ are topological spaces and $\pi$ is surjective and continuous~\cite{BreMillerSloyer1967AMMfiberSpacesAndSheaves}.

The arrow diagram 
\begin{align*}
\begin{CD}
X @>\text{f}>> E\\
@. @VV\text{$\pi$}V\\
@. B
\end{CD}
\end{align*}
$\mbox{}$\\
\vspace{3mm}

\noindent in Fig.~\ref{fig:projections} represents a \emph{spatial fibre space} $(E, \pi, B)$ in which $E,B$ are topological spaces and $\pi:E\longrightarrow B$ is surjective and continuous.  R.S. BreMiller and C.W. Sloyer define $(E, \pi, B)$ to be a \emph{sheaf}, provided $\pi:E\longrightarrow B$ is a local surjective homeomorphism~\cite{BreMillerSloyer1967AMMfiberSpacesAndSheaves}, {\em i.e.}, $\pi:E\longrightarrow B$ is a continuous, 1-1 mapping on $E$ onto $B$. For $x\in B$, $\pi^{-1}(x) = e\in E$ is a fibre over $x$ and $\pi^{-1}(B) = E$ is a fibre bundle over $B$.

Let $\Ree A$ be set of regions $\re A$, $\mathscr{R}_{\re A}$ a class of regions, $B\subset \mathbb{R}^n$, which is a subset of an $n$-dimensional feature space.  The arrow diagram$\mbox{}$\\
\vspace{2mm}
 
\begin{align*}
\begin{CD}
\Ree A @>\text{f}>> \mathscr{R}_{\re A}\\
@. @VV\text{$\Phi$}V\\
@. B\subset \mathbb{R}^n
\end{CD}
\end{align*}
$\mbox{}$\\
\vspace{2mm}

\noindent in Fig.~\ref{fig:projections} represents a descriptive fibre space $(\mathscr{R}_{\re A}, \Phi, B)$ in which $\mathscr{R}_{\re A},B$ are topological spaces and $\Phi:\mathscr{R}_{\re A}\longrightarrow B$ is surjective and continuous.

Let $\Ree A$ be a set of regions $\re A$, $\mathscr{R}_{\re A}$ a class of regions, $B\subset \mathbb{R}^n$, a subset in an $n$-dimensional feature space.  In addition, let $f:\Ree A\longrightarrow \mathscr{R}_{\re A}, \Phi:\mathscr{R}_{\re A}\longrightarrow \mathbb{R}^n$ be continuous mappings such that
\[
\Ree A\ \mathop{\longmapsto}\limits^f\ \mathscr{R}_{\re A}\ \mathop{\longmapsto}\limits^{\Phi}\ B\subset\mathbb{R}^n.
\]
For region $\re A\in \mathscr{R}_{\re A}$, recall that $\mathscr{R}_{\re A}\ \mathop{\longmapsto}\limits^{\Phi}\ \mathbb{R}^n$ is defined by
\[
\Phi(\re A) = \left(\varphi_1(\re A),\dots,\varphi_i(\re A)\dots,\varphi_n(\re A)\right): \varphi_i(\re A)\in \mathbb{R}.
\]
$\Phi(\re A)$ is a feature vector that provides a \emph{description} of region $\re A$. In a descriptive proximal physical geometry, $f$ maps a collection of regions $\Ree A$ to a class of regions $\mathscr{R}_{\re A}$, which is projected onto $B\subset \mathbb{R}^n$ by $\Phi$.  In effect, $\Phi\left(f(\Ree A)\right)\in \mathbb{R}^n$.  

Let $\re X\in \mathscr{R}_{\re A}$ and let $\Phi(\re X)$ be a feature vector that describes region $\re X$ in the class of regions $\mathscr{R}_{\re A}$.  The continuous mapping $\Phi:\mathscr{R}_{\re A}\longrightarrow B$ is a \emph{projection} on the class of regions $\mathscr{R}_{\re A}$ onto $B$ such that $(\mathscr{R}_{\re A}, \Phi, B)$ is a \emph{descriptive fibre space} and the set of feature vectors $B\subset \mathbb{R}^n$ is the \emph{descriptive base space}.  For $x$ in $B$, $\Phi^{-1}(x)\in \mathscr{R}_{\re A}$ is a fibre over $x$.  The leads to a descriptive fibre bundle which is a BreMiller-Sloyer sheaf for a class of regions.  

\begin{theorem}\label{thm:fibreBundle}
Let $\re X\in \mathscr{R}_{\re A}$ and let $\Phi(\re X)$ be a feature vector for region $\re X$ in the class of regions $\mathscr{R}_{\re A}$, $x\in\mathbb{R}^n$.  Assume that every $\re X$ in $\mathscr{R}_{\re A}$ has a unique feature, $\Phi:\mathscr{R}_{\re A}\longrightarrow B\subset \mathbb{R}^n$ is a local surjective homeomorphism in which $\mathscr{R}_{\re A}, B$ are topological spaces and $\Phi^{-1}(x)\in \mathscr{R}_{\re A}$ for $x\in B$.  Then 
\begin{compactenum}[1$^o$]
\item $(\mathscr{R}_{\re A}, \Phi, B)$ is a BreMiller-Sloyer sheaf.
\item $\Phi^{-1}(B)$ is a descriptive fibre bundle over $B$.
\end{compactenum}
\end{theorem}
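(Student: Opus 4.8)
The plan is to verify the two conclusions directly from the definitions of a BreMiller--Sloyer sheaf and of a fibre bundle recalled just before the statement, feeding in the standing hypotheses that $\Phi$ is a local surjective homeomorphism and that each region in $\mathscr{R}_{\re A}$ carries a unique feature. In effect the argument is a bookkeeping exercise: match the data $(\mathscr{R}_{\re A},\Phi,B)$ against the BreMiller--Sloyer axioms and then unwind the fibre terminology.

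First I would establish $1^o$. By the BreMiller--Sloyer definition recalled just above, $(E,\pi,B)$ is a sheaf precisely when $\pi:E\longrightarrow B$ is a local surjective homeomorphism, i.e., a continuous, one-to-one map of $E$ onto $B$ that is locally a homeomorphism. Here $E=\mathscr{R}_{\re A}$, $\pi=\Phi$, and the hypothesis already supplies that $\mathscr{R}_{\re A}$ and $B$ are topological spaces and that $\Phi$ is a local surjective homeomorphism, so continuity, surjectivity onto $B$, and the local homeomorphism property are in hand. The only point needing a line of argument is injectivity: since every $\re X$ in $\mathscr{R}_{\re A}$ has a unique feature, two distinct regions $\re X,\re Y\in\mathscr{R}_{\re A}$ receive distinct feature vectors $\Phi(\re X)\neq\Phi(\re Y)$, so $\Phi$ is $1$-$1$. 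Hence $\Phi:\mathscr{R}_{\re A}\longrightarrow B$ is continuous, $1$-$1$, onto, and locally a homeomorphism, so $(\mathscr{R}_{\re A},\Phi,B)$ is a BreMiller--Sloyer sheaf.

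Next I would deduce $2^o$ from $1^o$ by reading off the fibre terminology. For each $x\in B$ the hypothesis gives $\Phi^{-1}(x)\in\mathscr{R}_{\re A}$, so $\Phi^{-1}(x)$ is the fibre over $x$ in the sense recalled above; since $\Phi$ is onto $B$, we have $\Phi^{-1}(B)=\mathscr{R}_{\re A}$, and this total preimage is, by definition, the fibre bundle over $B$. Because $\Phi$ is the description map carrying the regions of $\mathscr{R}_{\re A}$ into the feature space $B\subset\mathbb{R}^n$, the object $\Phi^{-1}(B)$ is exactly the descriptive fibre bundle over the descriptive base space $B$.

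The step that needs the most care is the injectivity argument inside $1^o$: one must read \emph{unique feature} as \textbf{the feature vector $\Phi(\re X)$ determines $\re X$ within $\mathscr{R}_{\re A}$}, and reconcile this with the fact that a class of regions $\mathscr{R}_{\re A}$ is collected by a \emph{common} description. The resolution is that the common description that defines membership in the class is only part of the data, while the full feature vector $\Phi(\re X)$ still separates the class's members; it is precisely this that makes $\Phi$ invertible on $B$ and so legitimately a local homeomorphism rather than merely a continuous surjection. Once that reading is fixed, both conclusions follow immediately from the definitions, with no further computation required.
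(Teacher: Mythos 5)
Your proposal is correct and follows essentially the same route as the paper's own proof: both conclusions are read off from the BreMiller--Sloyer definitions recalled before the statement, with the unique-feature hypothesis supplying $\Phi^{-1}(x)=\re X$ and hence $\Phi^{-1}(B)=\mathscr{R}_{\re A}$. If anything, you are more careful than the paper, which disposes of $1^o$ with ``by definition'' and places the uniqueness/injectivity argument only in $2^o$, whereas you make the injectivity of $\Phi$ (and the tension with the class being gathered by a \emph{common} description) explicit.
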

\begin{proof}$\mbox{}$\\
1$^o$: $(\mathscr{R}_{\re A}, \Phi, B)$ is, by definition, a BreMiller-Sloyer sheaf.\\
2$^o$: Let $\re X\in \mathscr{R}_{\re A}$ and let $x\in \mathbb{R}^n$ be a feature vector that describes $\re X$.  Without loss of generality, assume each $\re X$ has a unique shape.  $\Phi^{-1}(x) = \re X$, since every $\re X$ has a unique shape in the class of regions $\mathscr{R}_{\re A}$.  Hence,  $\Phi^{-1}(B) = \mathscr{R}_{\re A}$ is a descriptive fibre bundle over $B$.
\end{proof}

\begin{example}
Let $sew(\re A,\re B, k)$ be a simplicial complex (from Lemma~\ref{lem:sewingRegions}) derived from a pair of disjoint regions $\re A,\re B$, $\mathscr{R}_{sew(\re A,\re B, k)}$ the class of all planar shapes that are similar to $sew(\re A,\re B, k)$, and $B$ is a set of descriptions of planar shapes.   
The arrow diagram 
\begin{align*}
\begin{CD}
\left\{re A,\re B\right\} @>\text{sew}>> \mathscr{R}_{sew(\re A,\re B, k)}\\
@. @VV\text{$\Phi$}V\\
@. B\subset \mathbb{R}^n
\end{CD}
\end{align*}
$\mbox{}$\\
\vspace{2mm}

\noindent represents a descriptive fibre space 
$
(\mathscr{R}_{sew(\re A,\re B, k)}, \Phi, B)
$
in which $\mathscr{R}_{sew(\re A,\re B, k)},B$ are topological spaces and $\Phi:\mathscr{R}_{sew(\re A,\re B, k)}\longrightarrow B$ is a homeomorphism.  From Theorem~\ref{thm:fibreBundle}, 
$
(\mathscr{R}_{sew(\re A,\re B, k)}, \Phi, B)
$
is a BreMiller-Sloyer sheaf and $\Phi^{-1}(B)$ is a descriptive fibre bundle over $B$.
\qquad \textcolor{blue}{$\blacksquare$}
\end{example}

Let $\mathscr{R}_{\re A}, \mathscr{R}_{\re A'}$ be a pair of classes of regions.  In addition, let $X\in \mathscr{R}_{\re A}$ (a region in class $\mathscr{R}_{\re A}$), $Y\in \mathscr{R}_{\re A'}$ (a region in class $\mathscr{R}_{\re A'}$).  Assume $X\ \sfar\ Y$ ($X$ and $Y$ are strongly far apart) for each pair of regions in $\mathscr{R}_{\re A}, \mathscr{R}_{\re A'}$, respectively. Then, from Lemma~\ref{lem:parallelRegions}, $\mathscr{R}_{\re A}, \mathscr{R}_{\re A'}$ are parallel classes.  That is, $\mathscr{R}_{\re A}\ \|\ \mathscr{R}_{\re A'}$ are parallel, if and only if $X\ \|\ Y$ for all $X\in \mathscr{R}_{\re A},Y\in \mathscr{R}_{\re A'}$.

\begin{theorem}\label{thm:parallelClasses}
Let $B,H\subset \mathbb{R}^n$ be sets of feature vectors that describe regions in the classes of regions $\mathscr{R}_{\re A}, \mathscr{R}_{\re A'}$, respectively. Assume that every region $\re X\in \mathscr{R}_{\re A}, \re Y\in \mathscr{R}_{\re A'}$ has a unique feature and that $\re X\ \sfar\ \re Y$.  Also, let $\Phi_1:\mathscr{R}_{\re A}\longrightarrow B\subset \mathbb{R}^n, \Phi_2:\mathscr{R}_{\re A'}\longrightarrow H\subset \mathbb{R}^n$ be homeomorphisms and let $\Phi^{-1}(B),\Phi^{-1}(H)$ be descriptive fibre bundles.  Classes $\mathscr{R}_{\re A},\mathscr{R}_{\re A'}$ are parallel if and only if $\Phi^{-1}(B)\ \|\ \Phi^{-1}(H)$.
\end{theorem}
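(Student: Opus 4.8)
The plan is to show both implications of the biconditional by passing back and forth between the classes $\mathscr{R}_{\re A},\mathscr{R}_{\re A'}$ and their images $B,H$ under the homeomorphisms $\Phi_1,\Phi_2$, using that the hypotheses of Lemma~\ref{lem:parallelClasses} (equivalently Axiom~d.\ref{ax:ParallelClasses}) are exactly what is assumed here. First I would record the setup: every $\re X\in\mathscr{R}_{\re A}$ has a unique feature, so by Theorem~\ref{thm:fibreBundle} the triples $(\mathscr{R}_{\re A},\Phi_1,B)$ and $(\mathscr{R}_{\re A'},\Phi_2,H)$ are BreMiller-Sloyer sheaves and $\Phi^{-1}(B)=\mathscr{R}_{\re A}$, $\Phi^{-1}(H)=\mathscr{R}_{\re A'}$ as descriptive fibre bundles. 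Thus the equality $\Phi^{-1}(B)=\mathscr{R}_{\re A}$ (and likewise $\Phi^{-1}(H)=\mathscr{R}_{\re A'}$) reduces the claimed equivalence to the tautology-looking statement $\mathscr{R}_{\re A}\ \|\ \mathscr{R}_{\re A'}\iff \mathscr{R}_{\re A}\ \|\ \mathscr{R}_{\re A'}$, once the fibre bundles are identified with their total spaces.

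For the forward direction, assume $\mathscr{R}_{\re A}\ \|\ \mathscr{R}_{\re A'}$. By Axiom~d.\ref{ax:ParallelClasses} this means $\re X\ \|\ \re Y$ for every pair $\re X\in\mathscr{R}_{\re A}$, $\re Y\in\mathscr{R}_{\re A'}$. Since $\Phi^{-1}(B)=\mathscr{R}_{\re A}$ and $\Phi^{-1}(H)=\mathscr{R}_{\re A'}$ as fibre bundles over their base spaces, every region in $\Phi^{-1}(B)$ is a region in $\mathscr{R}_{\re A}$ and symmetrically for $\Phi^{-1}(H)$; hence $\re X\ \|\ \re Y$ for all $\re X\in\Phi^{-1}(B),\re Y\in\Phi^{-1}(H)$, and applying Axiom~d.\ref{ax:ParallelClasses} again gives $\Phi^{-1}(B)\ \|\ \Phi^{-1}(H)$. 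For the converse, assume $\Phi^{-1}(B)\ \|\ \Phi^{-1}(H)$; under the same identifications this is $\mathscr{R}_{\re A}\ \|\ \mathscr{R}_{\re A'}$, which is what we want. I would also note that the hypothesis $\re X\ \sfar\ \re Y$ together with Lemma~\ref{lem:parallelClasses} independently guarantees $\mathscr{R}_{\re A}\ \|\ \mathscr{R}_{\re A'}$, so both sides of the biconditional are in fact true; the theorem is really asserting that the fibre-bundle description and the class description of parallelism agree.

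The main obstacle is not any deep computation but making precise the identification of a descriptive fibre bundle $\Phi^{-1}(B)$ with the class of regions $\mathscr{R}_{\re A}$ it is built from, and checking that the relation $\|$ is preserved under this identification. Concretely, I must verify that the local surjective homeomorphism $\Phi_1$ carries the parallelism relation on $\mathscr{R}_{\re A}$ faithfully onto the base $B$ and back, so that ``$\re X\ \|\ \re Y$ for all representatives'' is unambiguous whether phrased in $\mathscr{R}_{\re A}$ or in $\Phi^{-1}(B)$. Once Theorem~\ref{thm:fibreBundle} is invoked to supply $\Phi^{-1}(B)=\mathscr{R}_{\re A}$, this bookkeeping is routine, and the two implications then follow immediately from Axiom~d.\ref{ax:ParallelClasses} and Lemma~\ref{lem:parallelClasses}.
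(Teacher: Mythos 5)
Your proposal is correct and follows essentially the same route as the paper's own proof: both arguments use the uniqueness of feature vectors to identify $\Phi_1^{-1}(B) = \mathscr{R}_{\re A}$ and $\Phi_2^{-1}(H) = \mathscr{R}_{\re A'}$, invoke the strongly-far hypothesis together with the parallel-regions lemma to obtain parallelism of the classes, and then read off the biconditional as an immediate consequence of that identification. Your additional remark that both sides of the equivalence are in fact true under the stated hypotheses is implicit in the paper's proof as well, so there is no substantive difference.
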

\begin{proof}
Since each feature vector in $X$ is unique in its description of a region in $\mathscr{R}_{\re A}$, we have $\Phi_1^{-1}(B) = \mathscr{R}_{\re A}$.  Similarly, $\Phi_2^{-1}(H) = \mathscr{R}_{\re A'}$.  From the fact that $\Phi_1^{-1}(b)\ \sfar\ \Phi_1^{-1}(h), b\in B, h\in H$ for each pair of regions,  by Lemma~\ref{lem:parallelRegions}, classes $\mathscr{R}_{\re A},\mathscr{R}_{\re A'}$ are parallel.  Hence,
$\mathscr{R}_{\re A}\ \|\ \mathscr{R}_{\re A} \Leftrightarrow \Phi^{-1}(B)\ \|\ \Phi^{-1}(H)$.
\end{proof}

The result from Theorem~\ref{thm:parallelClasses} is similar to the result for descriptively parallel classes in Theorem~\ref{thm:descriptivelyparallelClasses}.

\begin{theorem}\label{thm:descriptivelyparallelClasses}
Let $X,Y\subset \mathbb{R}^n$ be sets of feature vectors that describe regions in the classes of regions $\mathscr{R}_{\re A}, \mathscr{R}_{\re B}$, respectively.
Assume that every region $\re X\in \mathscr{R}_{\re A}, \re Y\in \mathscr{R}_{\re B}$ has a unique feature and that $\re X\ \sfar\ \re Y$.
Let $\Phi_1:\mathscr{R}_{\re A}\longrightarrow \mathbb{R}^n, \Phi_2:\mathscr{R}_{\re B}\longrightarrow \mathbb{R}^n$ be continuous mappings.  Classes $\mathscr{R}_{\re A}, \mathscr{R}_{\re A}$ are descriptively parallel fibre bundles, if and only if $\Phi^{-1}(X)\ \|_{\Phi}\ \Phi^{-1}(Y)$.
\end{theorem}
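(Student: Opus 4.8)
The plan is to follow the template of the proof of Theorem~\ref{thm:parallelClasses}, replacing the spatial relation $\|$ by the descriptive relation $\|_{\Phi}$ and the appeal to Axiom~d.9 by an appeal to Axiom~d.10. First I would use the uniqueness hypothesis: since each feature vector in $X$ describes exactly one region of $\mathscr{R}_{\re A}$, the continuous map $\Phi_1$ is injective on $\mathscr{R}_{\re A}$, hence $\Phi_1^{-1}(X) = \mathscr{R}_{\re A}$, and likewise $\Phi_2^{-1}(Y) = \mathscr{R}_{\re B}$. The same ``unique feature'' assumption lets Theorem~\ref{thm:fibreBundle} apply to each of $\Phi_1,\Phi_2$, so $(\mathscr{R}_{\re A},\Phi_1,X)$ and $(\mathscr{R}_{\re B},\Phi_2,Y)$ are BreMiller-Sloyer sheaves and $\Phi_1^{-1}(X)$, $\Phi_2^{-1}(Y)$ are descriptive fibre bundles over $X$ and $Y$. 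This identification reduces the asserted biconditional to $\mathscr{R}_{\re A}\ \|_{\Phi}\ \mathscr{R}_{\re B} \Leftrightarrow \Phi_1^{-1}(X)\ \|_{\Phi}\ \Phi_2^{-1}(Y)$, whose right-hand side is exactly the statement that the two fibre bundles are descriptively parallel.

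Next I would establish the equivalence first for individual regions and then lift it to classes via Axiom~d.10. Fix an arbitrary pair $\re X\in\mathscr{R}_{\re A}$, $\re Y\in\mathscr{R}_{\re B}$, each extended indefinitely. The hypothesis $\re X\ \sfar\ \re Y$ together with Lemma~\ref{lem:parallelRegions} yields $\re X\ \|\ \re Y$. To upgrade this to descriptive parallelism I invoke Axiom~d.\ref{ax:descriptivelyParallel}, which asserts $\re X\ \|_{\Phi}\ \re Y$ exactly when $\re X\ \sfar\ \re Y$ and $\re X\ \dnear\ \re Y$; the first clause is in hand, and on the ``descriptively parallel'' side of the biconditional the second clause follows from the same Axiom~d.\ref{ax:descriptivelyParallel} (descriptive parallelism of a fibre bundle forces descriptive nearness of its fibres). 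With $\re X\ \|_{\Phi}\ \re Y$ available for every such pair, Axiom~d.10 gives $\mathscr{R}_{\re A}\ \|_{\Phi}\ \mathscr{R}_{\re B}$; conversely, $\Phi_1^{-1}(X)\ \|_{\Phi}\ \Phi_2^{-1}(Y)$ unwinds, through $\Phi_1^{-1}(X)=\mathscr{R}_{\re A}$, $\Phi_2^{-1}(Y)=\mathscr{R}_{\re B}$ and the reverse direction of Axiom~d.10, back to $\re X\ \|_{\Phi}\ \re Y$ for each pair. Combining the two directions closes the loop.

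The step I expect to be the main obstacle is securing the descriptive nearness $\re X\ \dnear\ \re Y$ between members of the two classes. Its spatial analogue, Theorem~\ref{thm:parallelClasses}, needs no such data because spatial parallelism follows from $\re X\ \sfar\ \re Y$ alone through Lemma~\ref{lem:parallelRegions}; here Axiom~d.\ref{ax:descriptivelyParallel} demands in addition that $\re X\ \dnear\ \re Y$, yet the stated hypotheses list only $\re X\ \sfar\ \re Y$. This nearness must therefore either be read off from the ``descriptively parallel'' side of the biconditional or be supplied by an implicit assumption that the classes $\mathscr{R}_{\re A},\mathscr{R}_{\re B}$ (equivalently, their representative regions $\re A,\re B$) are descriptively near. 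Once that point is pinned down, the remainder is a routine combination of the uniqueness identification, Theorem~\ref{thm:fibreBundle}, Lemma~\ref{lem:parallelRegions}, Axiom~d.\ref{ax:descriptivelyParallel} and Axiom~d.10, mirroring the proof of Theorem~\ref{thm:parallelClasses} essentially line for line.
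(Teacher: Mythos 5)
Your proposal takes essentially the same route as the paper: the paper's entire proof is the single sentence that the result ``follows from Axiom d.8 with the proof symmetric with the proof of Theorem \ref{thm:parallelClasses},'' which is precisely the chain you spell out (uniqueness gives $\Phi_1^{-1}(X)=\mathscr{R}_{\re A}$ and $\Phi_2^{-1}(Y)=\mathscr{R}_{\re B}$, Lemma \ref{lem:parallelRegions} gives spatial parallelism from $\re X\ \sfar\ \re Y$, and Axioms d.8 and d.10 lift this to descriptive parallelism of the classes). Your observation that the descriptive nearness $\re X\ \dnear\ \re Y$ required by Axiom d.8 is not among the stated hypotheses identifies a real gap that the paper's one-line proof silently passes over as well.
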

\begin{proof}
The result follows from Axiom d.\ref{ax:descriptivelyParallel} with the proof symmetric with the proof of Theorem~\ref{thm:parallelClasses}.
\end{proof}

\section{Duality}
Recall that all propositions in projective geometry occur in dual pairs, {\em i.e.}, starting from either a proposition about a pair of points or a pair of lines, another proposition results by replacing points with lines or lines with points.  Since we are dealing with vertex regions instead of points in physical geometry, propositions about vertices hold true for propositions about non-vertex regions such as lines and strings.

Here is a well-known theorem rewritten for physical geometry, introduced by Pascal in 1640~\cite{Weisstein2016PascalTheorem}.

\begin{theorem}\label{thm:Pascal}{\bf Pascal's Theorem{\bf\cite{Weisstein2016BrianchonTheorem}}}
For a hexagon inscribed in a conic section, three pairs of the continuations of the opposite sides meet on a straight line.
\end{theorem}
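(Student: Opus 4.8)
The plan is to reduce the region-based statement to the classical Pascal theorem in the real projective plane, which is legitimate here because, as noted in the introduction, in the feature space a proximal physical geometry ``has the look and feel'' of Euclidean geometry; the reduction is carried out by a description map $\Phi$ that sends each vertex region to the coordinates of its center of mass in $\mathbb{R}^2$. First I would fix the data of the statement in region language: let the inscribed hexagon be a closed polygon with vertex regions $\re A_1,\dots,\re A_6$, each of minimal nonzero area (Axiom PG.2), all contained in a conic section $\mathcal{C}$, and with the six sides the lines $\overline{\re A_1\re A_2},\dots,\overline{\re A_6\re A_1}$ in the sense of the line definition (path-connected regions whose consecutive members share a common edge). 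The three pairs of opposite sides are $(\overline{\re A_1\re A_2},\overline{\re A_4\re A_5})$, $(\overline{\re A_2\re A_3},\overline{\re A_5\re A_6})$ and $(\overline{\re A_3\re A_4},\overline{\re A_6\re A_1})$; by Axiom PG.10 (a line cuts a line in a region) each pair, extended indefinitely, meets in a vertex region $\re P_1,\re P_2,\re P_3$, and the assertion to be proved is that there is a single straight line --- a region of constant slope, Axiom PG.1 --- that is path-connected through $\re P_1$, $\re P_2$ and $\re P_3$.

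Next I would run the argument in the feature plane. Applying $\Phi$, the conic $\mathcal{C}$ maps to an ordinary conic in $\mathbb{R}^2$, the six vertex regions to six points on it, the six side-lines to six straight lines, and the intersection regions $\re P_i$ to the three classical intersection points $\Phi(\re P_1),\Phi(\re P_2),\Phi(\re P_3)$; the hypothesis that consecutive vertices are distinct regions guarantees these six lines are pairwise distinct. The classical Pascal theorem --- or, via the duality principle recalled at the head of this section, its equivalence with Brianchon's theorem --- then applies verbatim and yields that $\Phi(\re P_1),\Phi(\re P_2),\Phi(\re P_3)$ are collinear in $\mathbb{R}^2$. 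To transport this back, I would take the classical Pascal line $\ell\subset\mathbb{R}^2$, thicken it to nonzero width as forced by Axiom PG.2, and set the physical Pascal line to be $L=\Phi^{-1}(\ell)$ restricted to the configuration; since $L$ has constant slope it is a straight line by Axiom PG.1, and since a straight line is a convex region we get $\re P_i\ \sn\ L$ for each $i$, so $\re P_1,\re P_2,\re P_3$ lie on the single straight line $L$, which is the claim.

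The step I expect to be the real obstacle is the legitimacy of this pull-back: $\Phi$ is not injective in general --- distinct regions may share a description --- so ``three points collinear in $\mathbb{R}^2$'' does not by itself give ``three regions on one physical line.'' I would close this gap by exploiting that the regions in play are vertex regions of minimal area whose location is by definition their center of mass, so the fibre of $\Phi$ over each $\Phi(\re P_i)$ meeting the configuration is the single region $\re P_i$; hence $\Phi$ restricted to $\{\re A_1,\dots,\re A_6,\re P_1,\re P_2,\re P_3\}$ is a bijection onto its image and the preimage of $\ell$ picks out exactly the intended line. A secondary difficulty is the degenerate configuration in which two opposite sides are parallel in the sense of the Proximal Parallel Axiom PG.12, \emph{i.e.}\ $\overline{\re A_i\re A_{i+1}}\ \sfar\ \overline{\re A_{i+3}\re A_{i+4}}$, so that the corresponding $\re P_i$ is not a bounded region; I would handle this either by restricting to the generic case or by passing to the one-point completion of the feature plane, where the Pascal line meets the line at infinity and the statement is recovered in its projective form, the duality with Brianchon's theorem again supplying the closure argument uniformly.
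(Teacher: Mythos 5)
First, a point of comparison: the paper does not prove Theorem~\ref{thm:Pascal} at all. It is introduced as ``a well-known theorem rewritten for physical geometry,'' with a citation to the classical source, and its only role is to set up its dual (Brianchon's theorem, Theorem~\ref{thm:Casey}) and the duality remarks that follow. So your proposal is not competing with an argument in the paper --- you are supplying a transfer argument where the paper simply imports the classical statement. That said, the transfer you sketch has a genuine gap, and it is not the one you flagged.

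The gap is in the forward direction, not in the pull-back. For the classical Pascal theorem to apply to $\Phi(\re A_1),\dots,\Phi(\re A_6)$ you need these six centers of mass to lie \emph{exactly} on a conic in $\mathbb{R}^2$, and you need each $\Phi(\re P_i)$ to be \emph{exactly} the intersection point of the two lines through the relevant centers. Neither follows from the physical hypotheses. ``Inscribed in a conic section'' in region language means the vertex regions are contained in, or strongly near, a conic-shaped region of nonzero width (forced by Axiom PG.2); that only places their centers of mass in a neighbourhood of an exact conic, and six points near a conic need not lie on any conic. Pascal's configuration is not exactly preserved under such perturbations: the three intersection points of the perturbed opposite sides are only approximately collinear. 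Similarly, the center of mass of the intersection region of two thick lines coincides with the intersection of their axes only under a symmetry assumption on the bands. What your reduction can honestly deliver is that $\re P_1,\re P_2,\re P_3$ lie on a common straight \emph{region} whose width is controlled by the widths of the input regions --- arguably the correct physical form of the statement, but obtaining it requires a quantitative stability estimate for Pascal's theorem that is nowhere in your sketch. By contrast, the two obstacles you single out are comparatively minor: the non-injectivity of $\Phi$ is handled adequately by restricting to the finite configuration of nine regions, and the parallel-sides degeneracy (where $\overline{\re A_i\re A_{i+1}}\ \sfar\ \overline{\re A_{i+3}\re A_{i+4}}$) by the projective completion, exactly as you propose.
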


The dual of Theorem~\ref{thm:Pascal} is Theorem~\ref{thm:Casey}, known as Brianchon's Theorem, introduced by J. Casey in 1888.

\begin{theorem}\label{thm:Casey}{\bf Brianchon's Theorem{\bf\cite{Casey1888PascalDual}}}
A hexagon circumscribed on a conic section has the lines joining opposite polygon vertices (polygon diagonals) meet in a single subregion.
\end{theorem}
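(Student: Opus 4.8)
The plan is to derive Brianchon's Theorem as the projective dual of Pascal's Theorem (Theorem~\ref{thm:Pascal}), transcribed into the region-based language of physical geometry. Recall that the duality of projective geometry exchanges vertex regions with lines, the incidence ``a vertex region lies on a line'' with ``a line passes through a vertex region,'' collinearity of vertex regions with concurrency of lines, and---decisively---sends a conic section to a conic section, namely the dual conic consisting of the tangent lines of the original. Since the geometry here keeps every projective incidence relation intact (a line is itself a region, ``passing through'' is the strong contact $\sn$ introduced earlier, and ``a line cuts a line in a region'' is one of the physical-geometry axioms), this correspondence is available verbatim; the only change is that ``point'' is replaced by ``vertex region.''

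First I would set up the circumscribed configuration. Fix a conic section $\mathscr{C}$ and a hexagon $H$ whose six sides are lines $L_1,\dots,L_6$, taken in cyclic order, each in strong contact with $\mathscr{C}$ in a single tangency subregion; this is precisely the dual of ``hexagon inscribed in a conic.'' Consecutive sides cut each other in a vertex region, so put $V_i = L_i \cap L_{i+1}$ with indices read mod $6$; these are the vertices of $H$, and the three principal diagonals are $\overline{V_1 V_4}$, $\overline{V_2 V_5}$, $\overline{V_3 V_6}$, the lines joining opposite vertices.

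Next I would apply the dualizing map to the whole configuration. Under duality $H$ corresponds to a hexagon $H^{\ast}$ \emph{inscribed} in the dual conic $\mathscr{C}^{\ast}$: each side $L_i$ of $H$ becomes a vertex region $L_i^{\ast}$ of $H^{\ast}$ that lies on $\mathscr{C}^{\ast}$, and each vertex region $V_i$ of $H$ becomes the side $\overline{L_i^{\ast} L_{i+1}^{\ast}}$ of $H^{\ast}$. Opposite vertices of $H$ dualize to opposite sides of $H^{\ast}$, so Pascal's Theorem, applied to the inscribed hexagon $H^{\ast}$, asserts that the three intersection regions of the pairs of opposite sides of $H^{\ast}$ lie on one straight line $\ell$. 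Dualizing back, the three diagonals $\overline{V_1 V_4}$, $\overline{V_2 V_5}$, $\overline{V_3 V_6}$ of $H$ each pass through the single vertex region $\ell^{\ast}$ dual to $\ell$. Hence the polygon diagonals of the circumscribed hexagon meet in a single subregion, which is the claim.

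The step I expect to be the main obstacle is justifying, rather than merely invoking, the duality inside physical geometry: one must verify that the dual of a conic section is again a region-based conic section, that the relations ``is a side of'' and ``is a vertex of'' together with $\sn$ are genuinely interchanged by the correspondence, and---because every region here has nonzero area---that ``meet in a single subregion'' is the honest dual of ``lie on one line,'' i.e. that $\ell^{\ast}$ is indeed one vertex region and not a thicker region. Once the self-duality of conics and the behaviour of $\sn$ under the correspondence are established, the remainder is the formal translation above, and the cyclic bookkeeping of the six sides can be treated as routine.
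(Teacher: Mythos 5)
The paper itself offers no proof of Theorem~\ref{thm:Casey}: it is stated as a cited classical result, with the surrounding prose merely asserting that it is the dual of Pascal's Theorem (Theorem~\ref{thm:Pascal}) under the paper's informal point--line duality principle. Your proposal spells out exactly that argument---dualize the circumscribed hexagon to an inscribed one, apply Pascal, dualize back---so it is the standard classical proof and coincides with the route the paper implicitly relies on. The obstacle you flag at the end is real and worth emphasizing: neither you nor the paper establishes that the polarity actually survives the passage to regions of nonzero area (that the dual of a region-based conic is again one, that $\sn$-incidence is exchanged correctly, and that the concurrency region is a single vertex region rather than a thickened blob), so your write-up is no less complete than the paper's treatment, but a fully rigorous version would need that duality lemma proved first.
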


\begin{conjecture}
Theorem~\ref{thm:Casey} has descriptive proximity form.  
\end{conjecture}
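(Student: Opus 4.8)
The plan is to derive the descriptive statement as a corollary of the spatial Brianchon Theorem (Theorem~\ref{thm:Casey}) together with the implication $A\ \sn\ B \Rightarrow A\ \dnear\ B$ recorded in Proposition~\ref{thm:sn-implies-near}. First I would fix the descriptive form precisely. Take a hexagon of vertex regions circumscribed on a conic-section region, each side being a line region that strongly contacts (is tangent to) the conic. Let $d_1, d_2, d_3$ be the three diagonals, {\em i.e.}, the lines joining opposite vertex regions. The descriptive claim I propose is: there is a subregion $\re p$ with $d_i\ \dnear\ \re p$ for $i = 1,2,3$, and moreover $d_i\ \dnear\ d_j$ for every pair $i,j$; if one additionally fixes a probe function $\varphi$ (for instance the coordinates of the center of mass of the common subregion, or an incidence index), then $\Phi$ is forced to agree along all three diagonals at $\re p$, which makes the descriptive incidence non-vacuous and supplies the genuine feature-matching dual of Pascal's line.

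Second, I would invoke Theorem~\ref{thm:Casey} to manufacture the common subregion. By the spatial Brianchon Theorem the diagonals $d_1, d_2, d_3$ meet in a single subregion $\re p$, so $\re p \subseteq d_1 \cap d_2 \cap d_3$; hence $d_i\ \sn\ \re p$ (the diagonal $d_i$ and $\re p$ share the subregion $\re p$) for each $i$, and also $d_i\ \sn\ d_j$ since $\re p$ is common to both. This is the step where all the projective-incidence content is used, and it is borrowed wholesale from the already-established dual of Pascal's Theorem.

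Third, I would push this through the proximity hierarchy. By Proposition~\ref{thm:sn-implies-near}(2$^o$), $d_i\ \sn\ \re p \Rightarrow d_i\ \dnear\ \re p$ for each $i$, and likewise $d_i\ \dnear\ d_j$. Choosing a probe function that is constant on the overlap region $\re p$ (say $\varphi = $ center-of-mass coordinates of the meeting region) gives $\Phi(x) = \Phi(y)$ for the pair of subregions $x \in d_i$, $y \in \re p$ lying in the overlap, which is exactly the $(\boldsymbol{\dcap})$/(dP2) witness for $d_i\ \dnear\ \re p$. Packaging these facts yields the descriptive proximity form of Brianchon's Theorem; if one wishes, it can then be restated in the class-of-regions language of Axiom~d.\ref{ax:ParallelClasses}--d.\ref{ax:descriptivelyParallelClasses} by passing to the classes $\mathscr{R}_{d_1}, \mathscr{R}_{d_2}, \mathscr{R}_{d_3}$ and noting that descriptive nearness of diagonals lifts to descriptive nearness of the associated classes.

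The main obstacle is not the logical deduction but the formulation: deciding which descriptive statement is the \emph{right} dual. The weak reading ($d_i\ \dnear\ \re p$) is almost automatic once the spatial incidence is granted and is arguably too cheap; a more satisfying dual would assert a stronger feature-matching condition, {\em e.g.}, that the circumscribing side-lines are each descriptively near the conic region, or that suitably described attributes of the three ``opposite'' vertex-region pairs coincide. A secondary subtlety is ensuring the chosen $\Phi$ is compatible with the proximities in the sense of the ``compatible strong proximities'' hypotheses used elsewhere in the paper, so that the descriptive conclusion is both non-trivial and internally consistent with the axioms of descriptive physical geometry.
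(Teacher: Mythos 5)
Your proposal follows essentially the same route as the paper's own argument: invoke the spatial Brianchon Theorem to obtain the common meeting subregion of the three diagonals, conclude that each diagonal strongly contacts that subregion (and hence each other), and then pass to descriptive nearness via $\sn\ \Rightarrow\ \dnear$ together with a matching feature (the paper uses equal length $\varepsilon$ of the diagonals where you use a center-of-mass probe). Your version is more explicit about which probe witnesses the descriptive nearness and about what the descriptive statement should assert, but the underlying deduction is the same.
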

\begin{proof}
Let $p_1,\dots,p_8$ be vertices of hexagon $H$ circumscribing a conic section and let $\overline{p_1,p_5},\dots,\overline{p_8,p_4}$ be line segments with extremities that are opposite vertices in $H$.  Also, let $I$ be the index set for the vertices of $H$ and let $\re A$ be one of line segments joining opposite vertices in $H$.  And $\varepsilon\left(\overline{p_i,p_j}\right)$ equal length. Then $\overline{p_i,p_j}$ meet in the same subregion $\re C$,  $\re A\ \sn\ \overline{p_i,p_j}$ and $\re A\ \dnear\ \overline{p_i,p_j}$ for every $\overline{p_i,p_j}$.
\end{proof}

There are many other forms of duality in physical geometry.

\begin{remark}{\bf Duality in Physical Geometry}
A consequence of Axiom d.\ref{ax:congruent} is a natural duality among geometrical structures.  
\begin{compactenum}[{\bf dual}.1]
\item On a hypersphere $S^n$, the dual of a region $\re A\in S^n$ is its antipode $\righthalfcap \re A$ with a description that matches the description of $\re A$.
\item For a line segment $L$, its dual is a region of another line parallel with $L$.
\item The dual of a hole is a region with no holes.
\item\label{duality:spinningParticle} The dual of a spinning particle is a massless spin particle~\cite[\S 1.2.1, p. 14]{GreenScharzWitten2012CUPsuperstrings}.
\item\label{duality:boson} The dual of a boson in 6 dimensions is a fermion in 10 dimensions~\cite[\S 1.2.2, p. 16]{GreenScharzWitten2012CUPsuperstrings}.
\item\label{duality:gaugeTheory} The dual of gauge theory in 4 dimensions is string theory in 10 dimensions~\cite[p. 567]{Boschi-FilhoBraga2007gaugeStringDuality}.
\end{compactenum}
\qquad \textcolor{blue}{$\blacksquare$}
\end{remark}

\begin{conjecture}
Duality~\ref{duality:spinningParticle} has descriptive proximity form.  Let $\re A, \re B$ be spinning particle and massless spin particle, respectively.  $\re A\ \dnear\ \re B$.
\end{conjecture}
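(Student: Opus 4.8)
The plan is to deduce $\re A\ \dnear\ \re B$ directly from the definition of the descriptive proximity together with the observation, recorded in the Remark on Duality in Physical Geometry, that the dualities \textbf{dual}.1--\textbf{dual}.6 are consequences of Axiom~d.\ref{ax:congruent}: two geometric structures stand in a duality precisely when they are descriptively congruent, i.e., carry matching feature vectors. So the strategy is to make $\re A$ and $\re B$ into regions of a descriptive physical geometric space, read the duality \textbf{dual}.\ref{duality:spinningParticle} as a statement of descriptive congruence, and then close the argument with axiom (dP2).

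First I would fix the data. Treat the spinning particle $\re A$ and the massless spin particle $\re B$ as physical regions, and single out a probe function $\varphi_{\mathrm{spin}}$ recording the spin of a region, taken as one component of the description $\Phi$. The content of \textbf{dual}.\ref{duality:spinningParticle} (following Green--Schwarz--Witten, \S1.2.1) is that replacing a spinning particle by its dual massless spin particle preserves the spin, so this is the feature the two regions will be forced to share.

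Second I would invoke the duality. Since $\re B$ is the dual of $\re A$ under \textbf{dual}.\ref{duality:spinningParticle}, and, by the Remark, this is an instance of Axiom~d.\ref{ax:congruent}, the regions are descriptively congruent and in particular agree on the spin coordinate: $\varphi_{\mathrm{spin}}(\re A) = \varphi_{\mathrm{spin}}(\re B)$. Choose subregions $\re a \in \re A$ and $\re b \in \re B$ realising this common value, so that $\Phi(\re a) = \Phi(\re b)$; then $\re a \in \re A\ \dcap\ \re B$, whence $\re A\ \dcap\ \re B \neq \emptyset$. Applying axiom (dP2), $\re A\ \dcap\ \re B \neq \emptyset \Rightarrow \re A\ \dnear\ \re B$, which is the assertion.

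The crux, and the reason this is posed as a conjecture rather than proved outright, is definitional rather than geometric: one must justify that the string-theoretic duality cited in \textbf{dual}.\ref{duality:spinningParticle} genuinely descends to equality of at least one component of the feature vectors of $\re A$ and $\re B$. Once matching description is granted --- whether as a fact imported from physics or simply as the working meaning of ``dual'' in physical geometry --- the proximity statement is immediate; indeed, by the Proposition asserting the converse of (dP2), descriptive nearness and nonempty descriptive intersection coincide in a descriptive proximity space, so no further topological work remains.
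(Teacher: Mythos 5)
Your proposal is correct and follows essentially the same route as the paper: both arguments hinge on exhibiting spin as a feature shared with equal value by the spinning particle $\re A$ and the massless spin particle $\re B$, and then concluding $\re A\ \dnear\ \re B$ from the existence of a matching feature (you via $\re A\ \dcap\ \re B\neq\emptyset$ and axiom (dP2), the paper directly from the definition of $\dnear$, after noting $\varphi_{spin}(\re A)=\varphi_{spin}(\re B)=\mbox{true}$ while mass deliberately fails to match). The only cosmetic difference is that you justify the matching via the duality remark and Axiom d.5, whereas the paper invokes Axiom d.2 and asserts the common Boolean spin feature outright.
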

\begin{proof}
From Axiom~\ref{ax:description}, every region of space such as the region of space occupied by a particle has a description.  A feature common to $\re A, \re B$ is spin, which is treated as a Boolean variable, {\em i.e.}, 
\[
\varphi_{spin}(\re A) = \varphi_{spin}(\re B) = true.
\]
Another feature common to $\re A, \re B$ is mass, which is also treated as a Boolean variable, {\em i.e.}, 
\[
\varphi_{mass}(\re A) = \mbox{true}, \varphi_{mass}(\re B) = false.
\]
Hence, $\re A\ \dnear\ \re B$, since the two regions (space occupied by spinning particles) have at least one matching feature value.
\end{proof}

\begin{conjecture}
Duality~\ref{duality:boson} has descriptive proximity form.  Let $\re A, \re B$ be boson and fermion, respectively.  $\re A\ \dnear\ \re B$.
\end{conjecture}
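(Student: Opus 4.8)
The plan is to follow verbatim the template already used for Duality~\ref{duality:spinningParticle}. First I would invoke Axiom~d.\ref{ax:description}: the region of physical space occupied by a boson, call it $\re A$, and the region occupied by a fermion, call it $\re B$, each carry a feature vector, so $\Phi(\re A),\Phi(\re B)\in\mathbb{R}^n$ are defined. The proof then reduces to exhibiting one probe function $\varphi$ with $\varphi(\re A)=\varphi(\re B)$, because the definition of $\dnear$ only requires a single pair of subregions (here the regions themselves) with matching descriptions; once such a $\varphi$ is produced, $\re A\ \dnear\ \re B$ follows immediately.

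The key step is the choice of the matching feature. The natural candidate is a Boolean spin-presence probe $\varphi_{\mathrm{spin}}$, set to $\mathrm{true}$ on any region occupied by a particle that carries spin; since both a boson and a fermion carry spin (merely integral versus half-integral), $\varphi_{\mathrm{spin}}(\re A)=\varphi_{\mathrm{spin}}(\re B)=\mathrm{true}$. An alternative is a Boolean ``appears as a massless excitation in the two sides of the Green--Schwarz--Witten $6$-vs-$10$ dimensional correspondence'' probe, which holds for $\re A$ and $\re B$ by the cited duality. Either way one coordinate of $\Phi(\re A)$ equals the corresponding coordinate of $\Phi(\re B)$, so by the definition of the descriptive proximity $\dnear$ (equivalently, $\re A\ \dcap\ \re B\neq\emptyset$) we conclude $\re A\ \dnear\ \re B$, which is precisely the descriptive-proximity form of Duality~\ref{duality:boson}.

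I expect the main obstacle to be conceptual rather than technical: a boson and a fermion differ exactly in spin statistics and in the ambient dimension ($6$ versus $10$), so the ``common feature'' must be chosen to be genuinely shared and not an artefact of coarse bookkeeping. Keeping the probe function Boolean (presence or absence of spin, or membership in the two sides of a recognized duality) is what makes the descriptive nearness robust; any attempt to match a numerically valued feature such as a spin component or a dimension count would fail, and I would avoid it.
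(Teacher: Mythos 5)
The paper states this as a bare conjecture and supplies no proof of its own --- unlike the immediately preceding conjecture on Duality dual.4 (spinning particle vs.\ massless spin particle), which it does prove. Your argument is a faithful transplant of that proof: invoke Axiom d.2 so that the regions occupied by the two particles each carry a feature vector, exhibit one probe with matching Boolean values on both, and conclude $\re A\ \dnear\ \re B$ from the definition of descriptive proximity (a single matching feature value suffices). Within the paper's conventions this is correct and is exactly the route the author takes for the sibling statement. One small caution: a spin-$0$ boson carries no spin, so the ``presence of spin'' Boolean is not unambiguously true of every boson; your alternative probe --- membership in the two sides of the Green--Schwarz--Witten $6$-versus-$10$-dimensional correspondence --- is the safer choice, since it holds for $\re A$ and $\re B$ by the very duality being given descriptive form.
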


\begin{conjecture}\label{conj:gaugeField}
Duality~\ref{duality:gaugeTheory} has descriptive proximity form.  
\end{conjecture}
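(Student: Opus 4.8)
The plan is to follow exactly the template used to prove the preceding conjectures on dualities~\ref{duality:spinningParticle} and~\ref{duality:boson}. First I would make the descriptive form explicit: let $\re A$ be the region of physical space carrying the gauge theory in $4$ dimensions and let $\re B$ be the region carrying the string theory in $10$ dimensions; the assertion to be proved is then $\re A\ \dnear\ \re B$. By Axiom~d.\ref{ax:description}, each region of space has a description, so the feature vectors $\Phi(\re A)$ and $\Phi(\re B)$ are well-defined elements of some $\mathbb{R}^n$.

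Second, I would exhibit at least one probe function on which the two descriptions agree. The natural choice is a probe that records what the gauge--string duality~\cite{Boschi-FilhoBraga2007gaugeStringDuality} forces the two sides to have in common, rather than a geometric attribute: for instance a Boolean probe $\varphi_{\mathrm{obs}}$ that is \emph{true} on a region precisely when that region's physical observables (correlation functions, the spectrum of the dilatation/energy operator) coincide with those of its dual, so that $\varphi_{\mathrm{obs}}(\re A)=\varphi_{\mathrm{obs}}(\re B)=\mathrm{true}$; alternatively a numeric probe such as the 't~Hooft coupling $\lambda$, which is a common parameter on both sides of the correspondence. Either way we obtain $\Phi(x)=\Phi(y)$ for some subregion $x\in\re A$ and some subregion $y\in\re B$.

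Third, from a matching feature value I would invoke the definition of the descriptive proximity $\dnear$: $\Phi(x)=\Phi(y)$ for some $x\in\re A,\ y\in\re B$ gives $\re A\ \dcap\ \re B\neq\emptyset$, and then by the descriptive Lodato Axiom (dP2) we conclude $\re A\ \dnear\ \re B$. This is precisely the closing step used in the proofs of the two previous conjectures, so once the common probe is in hand the argument is routine.

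The main obstacle is the second step. Unlike spin or mass, which are manifestly shared attributes, a gauge theory in $4$ dimensions and a string theory in $10$ dimensions are supported in ambient spaces of different dimension, so one must justify that a single probe function is meaningfully evaluable on both regions and returns the same value. I expect to resolve this by drawing the probe from the duality-invariant data guaranteed by the gauge--string correspondence (observables, coupling) rather than from the geometry of the underlying space; once such a probe is admitted into the proximal relator $\mathscr{R}$, the remainder of the proof is the direct application of (dP2) together with the definition of $\dcap$, exactly as above.
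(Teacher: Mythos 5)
The paper does not actually prove this statement: Conjecture~\ref{conj:gaugeField} is left open, and the only thing the author offers in its place is a remark that the proof ``can be derived from'' Green--Schwarz--Witten, \S 12.2--12.3. So there is no in-paper argument to compare yours against step by step; you have supplied an argument where the paper supplies none. What you have done is transplant the template the paper uses for the spinning-particle duality (dual.\ref{duality:spinningParticle}): fix the two regions, invoke Axiom~d.\ref{ax:description} to obtain feature vectors, exhibit one probe on which the descriptions agree, and close via the definition of $\dcap$ together with Axiom (dP2). That is faithful to the paper's method on the neighbouring conjectures, and the closing step is sound once a matching probe is in hand.

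The weak point is the one you flag yourself, and your first proposed fix does not really repair it: the Boolean probe $\varphi_{\mathrm{obs}}$ is defined to be \emph{true} on a region precisely when that region's observables coincide with those of its dual, so the equality $\varphi_{\mathrm{obs}}(\re A)=\varphi_{\mathrm{obs}}(\re B)=\mathrm{true}$ holds by construction and carries no content from the gauge--string correspondence; worse, it presupposes the duality it is meant to witness. Your second candidate, the 't~Hooft coupling, is the better choice, since it is a genuinely shared numerical parameter evaluable on both sides without reference to the duality itself, and it sidesteps the dimension mismatch between the two ambient spaces. To be fair, this looseness is no worse than the paper's own proof of the spinning-particle case, where $\varphi_{spin}(\re A)=\varphi_{spin}(\re B)=true$ is simply asserted; by the paper's own standards your argument with the coupling probe would pass, and it is more explicit than the author's deferral to the superstring literature.
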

\begin{remark}
The proof of Conjecture~\ref{conj:gaugeField} can be derived from superstring theory given by M.B. Green, J.H. Schwarz and E. Witten in~\cite[\S 12.2-12.3, pp. 281-283]{GreenScharzWitten2012CUPsuperstrings}.
\end{remark}

\section{Results for Descriptive Physical Geometry}
\noindent This section gives a selection of results for descriptive physical geometry.

\begin{proposition}
If $\re B$ is in a class of regions $\mathscr{R}_{\re A}$, then $\Phi(\re B)\in \Phi(\mathscr{R}_{\re A})$.
\end{proposition}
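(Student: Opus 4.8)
The plan is to argue directly from the definition of the description of a collection of regions. Recall that for a collection $\mathscr{R}$ of regions the description was defined by $\Phi(\mathscr{R}) = \left\{\Phi(\re C):\re C\in \mathscr{R}\right\}$; that is, $\Phi(\mathscr{R})$ is by construction nothing but the image of $\mathscr{R}$ under the description map $\Phi$. Applying this with $\mathscr{R} = \mathscr{R}_{\re A}$, the class of regions determined by the representative region $\re A$, yields $\Phi(\mathscr{R}_{\re A}) = \left\{\Phi(\re C):\re C\in \mathscr{R}_{\re A}\right\}$, so the whole content of the proposition is a membership check against this set.

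First I would invoke Axiom~d.\ref{ax:description}, which guarantees that the region $\re B$ has a description $\Phi(\re B)\in \mathbb{R}^n$; this ensures that the object we wish to locate inside $\Phi(\mathscr{R}_{\re A})$ is well defined. Next, by hypothesis $\re B\in \mathscr{R}_{\re A}$, and since $\mathscr{R}_{\re A} = \left\{\re C\in 2^X:\re A\ \dnear\ \re C\right\}$ is exactly the index set over which $\Phi(\mathscr{R}_{\re A})$ is formed, taking the index $\re C = \re B$ exhibits $\Phi(\re B)$ as one of the listed elements of $\Phi(\mathscr{R}_{\re A})$. Hence $\Phi(\re B)\in \Phi(\mathscr{R}_{\re A})$, which is the assertion.

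There is essentially no obstacle here: the statement unwinds immediately once the set-builder form of $\Phi(\mathscr{R}_{\re A})$ is made explicit, and the only point deserving a sentence is the well-definedness of $\Phi(\re B)$, supplied by Axiom~d.\ref{ax:description}. If desired, I would close with a brief remark that the implication does not reverse, since distinct regions may carry the same feature vector, so $\Phi(\re B)\in \Phi(\mathscr{R}_{\re A})$ need not place $\re B$ itself in $\mathscr{R}_{\re A}$.
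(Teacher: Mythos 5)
Your argument is correct and is exactly the paper's proof, merely spelled out: the paper also concludes immediately from the definition $\Phi(\mathscr{R}_{\re A}) = \left\{\Phi(\re C):\re C\in \mathscr{R}_{\re A}\right\}$, so membership of $\re B$ in the index set gives membership of $\Phi(\re B)$ in the image. Your added remarks on well-definedness and non-reversibility are harmless extras, not departures from the paper's route.
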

\begin{proof}
Immediate from the definition of $\Phi(\mathscr{R}_{\re A})$.
\end{proof}

\begin{proposition}\label{prop:interior}
Let $\Int(\re A)$ be the interior of a region $\re A$.  Then $\Phi(\Int(\re A))\in \mathbb{R}^n$.
\end{proposition}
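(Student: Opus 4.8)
The plan is to reduce the assertion to Axiom~d.\ref{ax:description} once it is clear that $\Int(\re A)$ is itself a region. First I would recall that the interior is defined by $\Int \re A = \bigcup\left\{B\subset X: B\ \text{is open and}\ B\subset A\right\}$, so $\Int(\re A)$ is a union of open subregions of $\re A$; since in this geometry a region is by definition a collection of subregions, $\Int(\re A)$ is again a region sitting in the ambient space $X$ of physical regions.

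Next I would invoke Axiom~d.\ref{ax:description}: every region has a description. Applying it to the region $\Int(\re A)$ yields a well-defined feature vector
\[
\Phi(\Int(\re A)) = \left(\phi_1(\Int(\re A)),\dots,\phi_n(\Int(\re A))\right),
\]
where each probe function $\phi_i$ takes values in $\mathbb{R}$. Hence the $n$-tuple lies in $\mathbb{R}^n$, which is exactly the claim.

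The only point requiring a word of care is the degenerate case $\Int(\re A)=\emptyset$ (for instance when $\re A$ is a hole, as in Fig.~\ref{fig:concentricRegions}); there one either restricts attention to regions with nonempty interior or adopts the convention that the empty region is assigned the zero feature vector, so that the statement still holds literally. Apart from this boundary case nothing needs to be computed, and the only thing to pin down --- the main ``obstacle'', such as it is --- is that the set-theoretic union defining $\Int(\re A)$ is admitted as a bona fide region, so that the descriptive apparatus ($\Phi$ together with Axiom~d.\ref{ax:description}) applies to it.
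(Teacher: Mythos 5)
Your proposal is correct and follows essentially the same route as the paper: the paper's own proof simply cites Axiom~d.2 (every region has a description) and concludes $\Phi(\Int(\re A))\in\mathbb{R}^n$ ``by definition.'' You are in fact slightly more careful than the paper, since you justify that $\Int(\re A)$ is itself a region (so the axiom genuinely applies to it rather than only to $\re A$) and you flag the degenerate case $\Int(\re A)=\emptyset$, which the paper's one-line proof passes over.
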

\begin{proof}
From Axiom~d.\ref{ax:description}, $\re A$ has a description and, by definition, $\Phi(\Int(\re A))\in \mathbb{R}^n$.
\end{proof}

\begin{example}{\bf Class of Nome Regions}.\\
The inverse elliptic nome $(\overline{q})_{\infty}$ is defined by
\begin{align*}
\overline{q} &\equiv e^{2\pi i \tau}\mbox{(square nome)}.\\
\tau &= \frac{\omega_2}{\omega_1} \mbox{(two half periods of an elliptic function)}.\\
(\overline{q})_{\infty} &= \mathop{\prod}\limits_{k=0}^{\infty}\left(1 - q^k\right) \mbox{($q$ series)}.\\
\eta(\tau) &\equiv \overline{q}^{\frac{1}{24}}(\overline{q})_{\infty}\mbox{(Dedekind $\eta$(eta) function)}.\\
m(q) &= \frac{16\eta^8\left(\frac{1}{2}\tau\right)\eta^{16}(2\tau)}{\eta(\tau)}\mbox{(inverse nome)}.
\end{align*}
The inverse elliptic nome $m(q)$ spirals as shown in Fig.~\ref{fig:strings0} to form a class regions $\mathscr{R}_{\re A}$ defined by the signature region $\re A$, {\em i.e.}, $\mathscr{R}_{\re A}$ is a collection of spiral regions that are strongly descriptively near $\re A$.  For example, the curvature of the Spiral region $\re B$ in Fig.~\ref{fig:strings0} has the same description as $\re A$, if we consider the values of $e^{\pi i\tau}$.
\qquad \textcolor{blue}{$\blacksquare$}
\end{example}

\begin{proposition}
A hole has a description.
\end{proposition}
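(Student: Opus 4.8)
The plan is to read the result straight off the axioms of descriptive physical geometry, since a hole is by definition merely a closed region $\re A$ with $\Int(\re A)=\emptyset$. First I would apply Axiom~d.\ref{ax:description}: every region has a description, and a hole is a region, so it carries a feature vector $\Phi(\re A)=(\phi_1(\re A),\dots,\phi_n(\re A))\in\mathbb{R}^n$.

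To make this airtight I would then verify the consistency clause of Axiom~d.4, which says that a closed region $\re A$ has a description if and only if $\Phi(\re A)=\Phi(\cl(\re A))$. A hole is closed, so by Axiom~PG.6 it is identified with its closure, $\re A=\cl(\re A)$; applying $\Phi$ to both sides gives $\Phi(\re A)=\Phi(\cl(\re A))$, which is precisely the required condition. Hence a hole has a description.

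The only subtlety --- and the step I would expect to need a word of justification --- is that $\Int(\re A)=\emptyset$ might seem to leave ``nothing to describe.'' This is not so: as with the concentric closed cubes, a hole still retains its bounding edges and faces, which are themselves regions with well-defined features such as perimeter, diameter and shape, and (by Axiom~PG.\ref{ax:area}) its bounding surface has non-zero area. Thus each probe function $\phi_i$ evaluates on a hole exactly as on any other region, so $\Phi(\re A)$ is well-defined and need not be degenerate; one may even exhibit an explicit description such as $\Phi(\re A)=(\area(\re A),\perim(\bdy\,\re A),\shape(\re A))$. This gives the desired result.
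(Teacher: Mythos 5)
Your proof is correct and rests on the same foundation as the paper's: Axiom d.2, which grants every region a description. The paper's own proof is a one-liner that instead views the hole as the interior of a closed region and cites its earlier proposition that $\Phi(\Int(\re A))\in\mathbb{R}^n$ (itself an immediate consequence of Axiom d.2), so your direct application of the axiom, together with the consistency check against Axiom d.4 and the remark that an empty interior still leaves bounding edges and faces to describe, is essentially the same argument made more explicit.
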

\begin{proof}
A hole is the interior of a closed region.  Hence, from Prop.~\ref{prop:interior}, the result follows.
\end{proof}

\begin{figure}[!ht]
\centering
\subfigure[Colour cube $\re B$]
 {\label{fig:concentricRegions0}\includegraphics[width=50mm]{concentricRegions}}\hfil
\subfigure[Colour cube $\re B$]
 {\label{fig:concentricRegions2}\includegraphics[width=50mm]{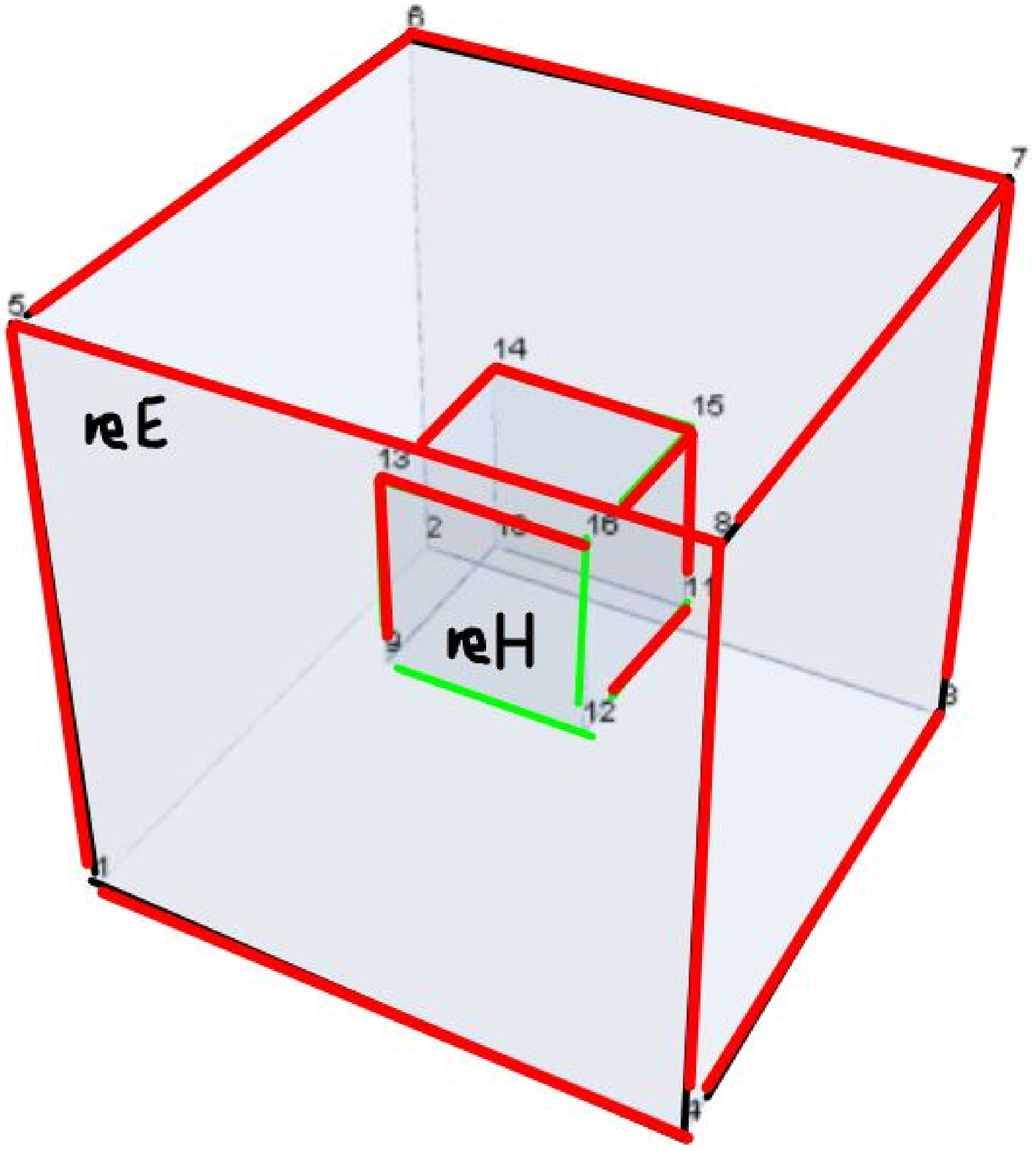}}
\caption[]{Closed Concentric Regions with Coloured Edges}
\label{fig:colouredRegions}
\end{figure}
$\mbox{}$\\
\vspace{2mm}

\begin{example}
A pair of cubes $\re A, \re E$ are represented in Fig.~\ref{fig:colouredRegions}.  Cubes $\re A, \re B$ are concentric and region $\re B$ is a closed region that has an empty interior.  Similarly, Cubes $\re E, \re H$ are concentric and region $\re H$ is a closed region that has an empty interior.  Let edge colour be in the red, green, blue (rgb) spectrum.  Since both $\re B$ and $\re H$ have green edges, we have $\re A\ \dnear\ \re E$.
\qquad \textcolor{blue}{$\blacksquare$}
\end{example}

Let string $\str A$ be a member of a region $\re A$.  In that case, $\str A$ is called a wired friend of $\re A$.  Next consider a form of the Brouwer fixed point theorem in which the shape of $\str A$ (denoted by $\shape A$) with $k$ features is mapped to a fixed point in $\mathbb{R}^k$.  Assume $\re A \subseteq \shape A$.

\begin{theorem}\label{thm:wiredFriends}{\bf String Wired Friend Theorem}.\\
Every occurrence of a wired friend $\str A$ with a particular $\strShape$ with $k$ features on $S^n$ maps to a fixed description $g(\str A)$ that belongs to a ball $B_{r,k}$ with radius $r > 0$ in $\mathbb{R}^k$.
\end{theorem}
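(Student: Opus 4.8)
\medskip
The plan is to obtain $g(\str A)$ as a Brouwer fixed point of a continuous self-map of a closed Euclidean $k$-ball built from the description map on string shapes, and to extract the ball $B_{r,k}$ from the finiteness of the region features.

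First I would pin down the description map. Let $\varphi_1,\dots,\varphi_k$ be probe functions recording the $k$ features of the shape $\strShape A$ (for instance curvature, length, perimeter, diameter, area) and set
\[
g(\str A)=\left(\varphi_1(\strShape A),\dots,\varphi_k(\strShape A)\right)\in\mathbb{R}^k.
\]
By Axiom~\ref{ax:dim} every region is finite and by Axiom~\ref{ax:area} every region has non-zero area, so each $\varphi_i(\strShape A)$ is a well-defined finite real; hence the set $G$ of feature vectors of all strings on $S^n$ carrying the shape $\strShape A$ is a bounded subset of $\mathbb{R}^k$, and I can enclose $G$ in a closed ball $B_{r,k}\subset\mathbb{R}^k$ of some radius $r>0$. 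By Axiom~\ref{ax:congruent}, any two occurrences of a wired friend with the prescribed $\strShape$ have matching feature vectors, so $g$ is constant on that family and $g(\str A)$ is a single well-defined point of $G\subset B_{r,k}$; this already settles the membership claim and is the sense in which the description is \emph{fixed}.

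To realize $g(\str A)$ as a fixed point in the Brouwer sense, I would let $\mathscr{R}_{\str A}$ be the class of all strings on $S^n$ whose shape is $\strShape A$, compose the nearest-point retraction $\rho:B_{r,k}\longrightarrow \cv G$ with the re-description of $\mathscr{R}_{\str A}$, and note that, since $g$ is constant on $\mathscr{R}_{\str A}$, the resulting map $h:B_{r,k}\longrightarrow B_{r,k}$ is continuous with image $\{g(\str A)\}$. As $B_{r,k}$ is homeomorphic to the standard closed $k$-ball, Brouwer's fixed point theorem provides $x_0\in B_{r,k}$ with $h(x_0)=x_0$, and unwinding $h$ forces $x_0=g(\str A)$. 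Thus every occurrence of the wired friend $\str A$ with shape $\strShape A$ on $S^n$ is carried by $g$ to the fixed description $x_0=g(\str A)\in B_{r,k}$, which is the assertion.

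The hard part is the middle step: arranging the retraction $\rho$ and the re-description so that their composite is a bona fide continuous self-map of $B_{r,k}$, and confirming that the Brouwer fixed point it produces is the genuine description of a realizable string rather than an artefact of the retraction. I would address this by taking $\rho$ to be nearest-point projection onto the closed convex hull of $G$ (so points of $G$ are left fixed and $h$ agrees with $g$ there), and by deriving continuity of $g$ from Axiom~\ref{ax:description} together with the descriptive continuity implicit in the hypothesis that a shape with $k$ features maps to a point of $\mathbb{R}^k$. The remaining bookkeeping — finiteness bounding the features, and distinct occurrences sharing a description — is routine.
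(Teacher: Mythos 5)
Your first paragraph is, in substance, the paper's own proof: the paper defines $f(\shape A)=\left(\varphi_1(\shape A),\dots,\varphi_k(\shape A)\right)\in\mathbb{R}^k$, observes that this description is \emph{fixed} (i.e.\ one and the same determined vector) for every occurrence of the given shape, and places it in $B_{r,k}$, defined as an $r$-neighbourhood of the set of descriptions. You make explicit two steps the paper leaves implicit --- boundedness of the feature values and constancy of $g$ across occurrences via Axiom d.5 (descriptive congruence) --- which is a genuine improvement in rigour. The Brouwer detour in your remaining paragraphs, however, buys nothing: the map $h$ you construct is constant with value $g(\str A)$, and a constant self-map of any nonempty set trivially has its value as its unique fixed point, so invoking Brouwer's theorem (and the attendant worries about the nearest-point retraction $\rho$ onto $\cv G$ and the continuity of $g$) is redundant machinery whose output you already had. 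The word ``fixed'' in the statement means only that the description is a single point of $\mathbb{R}^k$ independent of the occurrence, not a fixed point of a self-map; the paper's proof never actually applies Brouwer's theorem despite the prose preceding the theorem. In short: your proof is correct and follows the same essential route as the paper's, with a harmless but deletable appendage.
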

\begin{proof}
Let string $\str A$ be a member of a region $\re A$ with shape $\shape A$ with $k$ features.  From Axiom~d.\ref{ax:description}, $\shape A$ has a description, which is a feature vector.  Also, let $\varphi:\shape A\longrightarrow \mathbb{R}$ map $\shape A$ to a feature value in $\mathbb{R}$.  Then let
\[
f: 2^{\re A}\longrightarrow \mathbb{R}^k\ \mbox{defined by}\ f(\shape A) = \left(\varphi_1(\shape A),\cdots,\varphi_k(\shape A)\right).
\]
$f(\shape A)$ is a fixed point in $\mathbb{R}^k$.   Let $B_{r,k}(X)$ be a neighbourhood of a plane set $X$ with radius $r > 0$, defined by
\[
B_{r,k}(X) = \left\{y\in \mathbb{R}^k: \norm{f(\shape A) - y} \leq r\ \mbox{for some}\ f(\shape A)\in X\right\}.
\]
Without loss of generality, let $k = 1$ for a $\shape A$ be limited to its perimeter.  Since perimeter is fixed for each $\shape A$ and $f(\shape A)\in B_{r,k}(X)$, we get the desired result.
\end{proof}

Let $\wsh A$ be a worldsheet that results from a collection of strings that wrap around a particular volume of space $A$.  That is, $\wsh A$ conforms to a spatial region.  Hence, $\wsh A$ will have a particular shape $\wshShape$.

\begin{theorem}\label{thm:wiredFriends2}{\bf Worldsheet Wired Friend Theorem}\\
Every occurrence of a wired friend $\wsh A$ with a particular $\wshShape$ with $k$ features on $S^n$ maps to a fixed description $g(\wsh A)$ that belongs to a ball $B_{r,k}$ with radius $r$ in $\mathbb{R}^k$.
\end{theorem}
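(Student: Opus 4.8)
The plan is to mirror, almost verbatim, the proof of Theorem~\ref{thm:wiredFriends} (the String Wired Friend Theorem), with the string $\str A$ and its shape $\strShape$ replaced throughout by the worldsheet $\wsh A$ and its shape $\wshShape$. The structural fact that makes this transfer legitimate is that a worldsheet is itself a region of physical space: $\wsh A$ is completely covered by strings, yet it remains a single region which, by the hypothesis preceding the statement, conforms to a fixed volume of space $A$ and therefore carries a particular shape $\wshShape$. Hence every construction available for regions applies to $\wsh A$; in particular, by Axiom~d.\ref{ax:description}, $\wsh A$ --- and with it $\wshShape$ --- has a description, i.e. a feature vector.

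First I would fix $\wsh A$ together with its shape $\wshShape$ carrying $k$ distinguished features, and introduce probe functions $\varphi_i : \wshShape \longrightarrow \mathbb{R}$ for $i = 1,\dots,k$, each reading off one feature. Then I would define
\[
g : 2^{\wsh A} \longrightarrow \mathbb{R}^k,\qquad g(\wshShape) = \bigl(\varphi_1(\wshShape),\dots,\varphi_k(\wshShape)\bigr)\in\mathbb{R}^k.
\]
Since the $\varphi_i(\wshShape)$ are determined by the worldsheet shape alone and the hypothesis is that $\wsh A$ has a \emph{particular} (fixed) shape, the value $g(\wsh A)$ does not vary over occurrences of this wired friend; it is a fixed point of $\mathbb{R}^k$, exactly as $f(\shape A)$ is declared a fixed point in the proof of Theorem~\ref{thm:wiredFriends}. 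Next, following that same proof, I would set, for some $r > 0$,
\[
B_{r,k}(X) = \bigl\{ y \in \mathbb{R}^k : \norm{g(\wshShape) - y} \leq r\ \text{for some}\ g(\wshShape) \in X \bigr\},
\]
and observe that $g(\wsh A)\in B_{r,k}$ trivially, since the choice $y = g(\wshShape)$ gives distance $0 \le r$. As in the string case it is enough to verify the claim for a single feature: take $k = 1$ with the one feature being the perimeter (or area) of $\wshShape$, which is fixed once the worldsheet shape is fixed; the general-$k$ statement then follows componentwise.

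I do not anticipate a genuine obstacle. The only step that needs a word of justification is that the shape of a worldsheet admits a well-defined description, and this is precisely what the assumption that $\wsh A$ conforms to a spatial region (a fixed volume $A$) guarantees --- once it is granted, Axiom~d.\ref{ax:description} supplies the feature vector and the remainder is the same routine packaging as in Theorem~\ref{thm:wiredFriends}. A cleaner-looking alternative, which I would only flag in passing rather than carry out, is to deduce the result directly from Theorem~\ref{thm:wiredFriends}: every subregion of $\wsh A$ contains a string and $\wsh A$ is covered by strings of controlled shape, so a fixed worldsheet description can be assembled from the fixed string descriptions; but the direct mirror argument above is the more transparent route.
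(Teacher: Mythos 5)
The paper itself supplies no proof of Theorem~\ref{thm:wiredFriends2}: the statement sits immediately before the bibliography and is evidently left as the worldsheet analogue of Theorem~\ref{thm:wiredFriends}, whose proof you reproduce step for step with $\str A$, $\strShape$ replaced by $\wsh A$, $\wshShape$. Your proposal is therefore exactly the intended argument (and inherits the same level of rigour as the paper's string-case proof), so it matches the paper's approach.
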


\bibliographystyle{amsplain}
\bibliography{NSrefs}

\providecommand{\bysame}{\leavevmode\hbox to3em{\hrulefill}\thinspace}
\providecommand{\MR}{\relax\ifhmode\unskip\space\fi MR }
\providecommand{\MRhref}[2]{%
  \href{http://www.ams.org/mathscinet-getitem?mr=#1}{#2}
}
\providecommand{\href}[2]{#2}
\begin{thebibliography}{10}

\bibitem{Andrews1988AMMsurface}
P.~Andrews, \emph{The classification of surfaces}, Amer. Math. Monthly
  \textbf{95} (1988), no.~9, 861--867, MR0967348.

\bibitem{Boschi-FilhoBraga2007gaugeStringDuality}
H.~Boschi-Filho and N.R.F. Braga, \emph{Gauge/string duality and hadronic
  physics}, Brazilian J. of Physica \textbf{37} (2007), no.~2B, 567--571.

\bibitem{Bourbaki1966}
N.~Bourbaki, \emph{Elements of mathematics. {G}eneral topology, part 1},
  Hermann \& Addison-Wesley, Paris \& Reading, MA, U.S.A., 1966, i-vii, 437 pp.

\bibitem{BreMillerSloyer1967AMMfiberSpacesAndSheaves}
R.S. BreMiller and C.W. Sloyer, \emph{Fibre spaces and sheaves}, The American
  Mathematical Monthly \textbf{74} (1967), no.~6, 694--695, MR0214068.

\bibitem{Brossard1964AMMBirkhoffSpaceGeometry}
R.~Brossard, \emph{Birkhoff's axioms for space geometry}, The American
  Mathematical Monthly \textbf{71} (1964), no.~6, 593--606, MR0167872.

\bibitem{Brouwer1954CJMpointsAndSpaces}
L.E.J. Brouwer, \emph{Points and spaces}, Canadian Journal of Mathematics
  \textbf{6} (1954), 1--17, MR0059866.

\bibitem{Casey1888PascalDual}
J.~Casey, \emph{A sequel to the first six books of the elements of euclid,
  containing an easy introduction to modern geometry with numerous examples,
  5th ed.}, Hodges, Figgis, \& Co., Dublin, 1888, see pp. 146-147, cited in
  \url{http://mathworld.wolfram.com/BrianchonsTheorem.html}.

\bibitem{Chern1990AMMgeometry}
S.-S. Chern, \emph{What is geometry}, The American Mathematical Monthly
  \textbf{97} (1990), no.~8, 679--686, MR1072811.

\bibitem{CoppolaGerla2015LLPpointfreeGeometry}
C.~Coppola and G.~Gerla, \emph{Mereological foundations of point-free geometry
  via multi-valued logic}, Logic and Logical Philosophy \textbf{24} (2015),
  523--553.

\bibitem{CoppolaGerla2015pointfreeGeometry}
\bysame, \emph{Mereological foundations of point-free geometry via multi-valued
  logic}, Log. Log. Philos. \textbf{24} (2015), no.~4, 2371--2383, MR3486798.

\bibitem{CoppolaPacelli2006pointlessGeometry}
C.~Coppola and T.~Pacelli, \emph{Approximate distances, pointless geometry and
  incomplete information}, Fuzzy Sets and Systems \textbf{157} (2006), no.~17,
  2371--2383, MR2251843.

\bibitem{DiConcilio2013mcs}
A.~{Di Concilio}, \emph{Point-free geometries: {P}roximities and
  quasi-metrics}, Math. in Comp. Sci. \textbf{7} (2013), no.~1, 31--42,
  MR3043916.

\bibitem{Edelsbrunner1999}
H.~Edelsbrunner and J.L. Harer, \emph{Computational topology. an introduction},
  Amer. Math. Soc., Providence, RI, 2010, xii+241 pp. ISBN: 978-0-8218-4925-5,
  MR2572029.

\bibitem{EuclidElements300BC}
Euclid, \emph{Elements}, Alexandria, 300 B.C., English translation by R.
  Fitzpatrick, from Euclidis Elementa Latin text by B.G. Teubneri, 1883-1885
  and the Greek text by J.L. Heiberg, 1883-1885.

\bibitem{Gerla1982senzaPuntiGeometry}
G.~Gerla, \emph{La geometria senza punti}, Archimede \textbf{3} (1982),
  139--145.

\bibitem{Gerla1994pointlessGeometries}
\bysame, \emph{Pointless geometries}, Handbook of Incidence Geometry
  (G.~Buekenhout, ed.), North-Holland, Amsterdam, 1995, MR1360733,
  pp.~1015--1031.

\bibitem{Gerla2005pointfreeGeometryGradedInclusion}
\bysame, \emph{Graded inclusion and point-free geometry}, Int. J. Pure Appl.
  Math. \textbf{11} (2004), no.~1, 63--81, MR2033396.

\bibitem{Gerla2007pointfreeGeometryVersimilitudes}
\bysame, \emph{Point-free geometry and verisimilitude of theories}, J. Philos.
  Logic \textbf{36} (2007), no.~6, 707--733, MR2349880.

\bibitem{Gerla2007JPLpointfreeGeometry}
\bysame, \emph{Point-free geometry and verisimiltudes of theories}, Journal of
  Philosophical Logic \textbf{36} (2007), 707--733.

\bibitem{GerlaPaolillo2010WhiteheadGeometry}
G.~Gerla and B.~Paollilo, \emph{Whitehead's point-free geometry and diametric
  posets}, Logic and Logical Philosophy \textbf{19} (2010), no.~4, 289--308,
  MR2780381.

\bibitem{GerlaVolpe1985AMMpointfreeGeometry}
G.~Gerla and R.~Volpe, \emph{Geometry without points}, Amer. Math. Monthly
  \textbf{92} (1985), no.~10, 707--711, MR0820053.

\bibitem{GreenScharzWitten2012CUPsuperstrings}
M.B. Green, J.H. Schwarz, and E.~Witten, \emph{Superstring theory, vol. 1},
  Cambridge University Press, New York, 1987, x+470 pp. ISBN:
  978-1-107-02911-8, MR3155203.

\bibitem{Guadagni2015thesis}
C.~Guadagni, \emph{Bornological convergences on local proximity spaces and
  $\omega_{\mu}$-metric spaces}, Ph.D. thesis, Universit\`{a} degli Studi di
  Salerno, Salerno, Italy, 2015, Supervisor: A. {Di Concilio}, 79pp.

\bibitem{Krantz2010Homology}
S.~Krantz, \emph{Essentials of topology with applications}, CRC Press, Boca
  Raton, FL, 2010, xvi+404 pp. ISBN: 978-1-4200-8974-5,MR2554895.

\bibitem{Krantz2009}
S.G. Krantz, \emph{A guide to topology}, The Mathematical Association of
  America, Washington, D.C., 2009, ix + 107pp, MR2526439.

\bibitem{Lee2013smoothManifold}
J.M. Lee, \emph{Introduction to smooth manifolds}, Graduate Texts in Math. 218,
  Springer, 2016, xvi+708pp, MR2954043.

\bibitem{Lenzen1939AMMphysicalGeometry}
V.F. Lenzen, \emph{Physical geometry}, Amer. Math. Monthly \textbf{46} (1939),
  324--334, MR0000214.

\bibitem{Lodato1962}
M.W. Lodato, \emph{On topologically induced generalized proximity relations,
  ph.d. thesis}, Rutgers University, 1962, supervisor: S. Leader.

\bibitem{Lodato1964}
\bysame, \emph{On topologically induced generalized proximity relations i},
  Proc. Amer. Math. Soc. \textbf{15} (1964), 417--422, MR0161305.

\bibitem{Lodato1966}
\bysame, \emph{On topologically induced generalized proximity relations ii},
  Pacific J. Math. \textbf{17} (1966), 131--135, MR0192470.

\bibitem{LukeMishchenko1998VectorBundles}
G.~Luke and A.S. Mishchenko, \emph{Vector bundles and their applications},
  Springer, Berlin, 1998, 254 pp.

\bibitem{MooreSeiberg1989CMPconformalFields}
G.~Moore and N.~Seiberg, \emph{Classical and quantum conformal field theory},
  Comm. Math. Phys. \textbf{123} (1989), no.~2, 177--254, MR1002038.

\bibitem{Naimpally2013}
S.A. Naimpally and J.F. Peters, \emph{Topology with applications. topological
  spaces via near and far}, World Scientific, Singapore, 2013, xv + 277 pp,
  Amer. Math. Soc. MR3075111.

\bibitem{Naimpally70withWarrack}
S.A. Naimpally and B.D. Warrack, \emph{Proximity spaces}, Cambridge Tract in
  Mathematics No. 59, Cambridge University Press, Cambridge, UK, 1970, x+128
  pp.,Paperback (2008), MR2573941.

\bibitem{Olive1987algebrasAndStrings}
D.I. Olive, \emph{Algebras, lattices and strings 1986. {U}nification of
  fundamental interactions}, Proc. Royal Swedish Acad. Sci., Stockholm, 1987
  (1987), 19--25, MR0931580.

\bibitem{Olive1988stringsAndSuperstrings}
\bysame, \emph{Loop algebras, {Q}{F}{T} and strings}, Proc. {S}trings and
  superstrings, Madrid 1987 (1988), 217--2858, World Sci. Publ., Teaneck, NJ,
  MR1022259.

\bibitem{OliveLandsberg1989stringTheory}
D.I. Olive and P.T. Landsberg, \emph{Introduction to string theory: its
  structure and its uses. {P}hysics and mathematics of strings}, Philos. Trans.
  Roy. Soc. London (1989), 319--328, MR1043892.

\bibitem{Peters2015JangjeonMSstrongProximity}
J.F. Peters, \emph{Proximal {D}elaunay triangulation regions}, Proceedings of
  the Jangjeon Math. Soc. \textbf{18} (2015), no.~4, 501--515, MR3444736.

\bibitem{Peters2016ComputationalProximity}
\bysame, \emph{Computational proximity. {E}xcursions in the topology of digital
  images.}, Intelligent Systems Reference Library \textbf{102} (2016), xxviii +
  433pp, DOI: 10.1007/978-3-319-30262-1.

\bibitem{Peters2016relator}
\bysame, \emph{Proximal relator spaces}, Filomat \textbf{30} (2016), no.~2,
  469--472, doi:10.2298/FIL1602469P, MR3497927.

\bibitem{Peters2015AMSJmanifolds}
J.F. Peters and C.~Guadagni, \emph{Strong proximities on smooth manifolds and
  {V}orono\"{i} diagrams}, Advances in Math.: Sci. J. \textbf{4} (2015), no.~2,
  91--107, Zbl 1339.54020.

\bibitem{PetersGuadagni2015arXivFar}
\bysame, \emph{Strongly far proximity and hyperspace topology}, arXiv
  \textbf{1502} (2015), no.~02771v2, 1--6.

\bibitem{PetersGuadagni2015stronglyNear}
\bysame, \emph{Strongly near proximity and hyperspace topology}, arXiv
  \textbf{1502} (2015), no.~05913, 1--6.

\bibitem{PetersGuadagni2015strongConnectedness}
\bysame, \emph{Strongly proximal continuity \& strong connectedness}, Topology
  and its Applications \textbf{204} (2016), 41--50, MR3482701, Zbl 1338.54133.

\bibitem{Peters2012ams}
J.F. Peters and S.A. Naimpally, \emph{Applications of near sets}, Notices of
  the Amer. Math. Soc. \textbf{59} (2012), no.~4, 536--542, DOI:
  \url{http://dx.doi.org/10.1090/noti817}, MR2951956.

\bibitem{PetersTozzi2016arXivWiredFriend}
J.F. Peters and A.~Tozzi, \emph{String-based {B}orsuk-{U}lam theorem and wired
  friend theorem}, arXiv \textbf{1605} (2016), no.~02987v3, 1--17.

\bibitem{Rowland2016Manifold}
T.~Rowland, \emph{Manifold}, Wolfram MathWorld (2016),
  \url{http://mathworld.wolfram.com/Manifold.html}.

\bibitem{ScribaSchreiber2005geometry}
C.J. Scriba and P.~Schreiber, \emph{5000 years of geometry}, Springer, Berlin,
  2005, xiii+629 pp. ISBN: 3-540-22471-8, MR2530916.

\bibitem{Szaz1987}
\'{A} Sz\'{a}z, \emph{Basic tools and mild continuities in relator spaces},
  Acta Math. Hungar. \textbf{50} (1987), no.~3-4, 177--201, MR0918156.

\bibitem{Vakil2011AMMperimeter}
R.~Vakil, \emph{The mathematics of doodling}, Amer. Math. Monthly \textbf{118}
  (2011), no.~2, 116--129, MR2795579.

\bibitem{Weisstein2016BrianchonTheorem}
E.W. Weisstein, \emph{Brianchon's theorem}, MathWorld (2016),
  \url{http://mathworld.wolfram.com/PascalsTheorem.html}.

\bibitem{Weisstein2016PascalTheorem}
\bysame, \emph{Pascal's theorem}, MathWorld (2016),
  \url{http://mathworld.wolfram.com/PascalsTheorem.html}.

\bibitem{Willard1970}
S.~Willard, \emph{General topology}, Dover Pub., Inc., Mineola, NY, 1970, xii +
  369pp, ISBN: 0-486-43479-6 54-02, MR0264581.

\bibitem{Ziegler2007polytopes}
G.M. Ziegler, \emph{Lectures on polytopes}, Springer, Berlin, 2007, x+370 pp.
  ISBN: 0-387-94365-X, MR1311028.

\bibitem{Zisman1999fibreBundles}
M.~Zisman, \emph{Fibre bundles, fibre maps}, History of topology (I.M. James,
  ed.), North-Holland, Amsterdam, 1999, MR1721117, pp.~605--629.

\end{thebibliography}

\end{document}